\theoremstyle{plain}
\newtheorem{theorem}{\bf Theorem}[section]
\newtheorem{lemma}[theorem]{\bf Lemma}
\newtheorem{corollary}[theorem]{\bf Corollary}
\newtheorem{conjecture}[theorem]{Conjecture}
\theoremstyle{definition}
\newtheorem{example}[theorem]{\bf Example}
\newtheorem{definition}[theorem]{\bf Definition}
\newtheorem{remarks}[theorem]{\bf Remarks}
\newcommand{\N}{\mathbb N}
\newcommand{\Z}{\mathbb Z}
\newcommand{\R}{\mathbb R}
\newcommand{\red}{{\text{\rm red}}}
 \DeclareMathOperator{\ord}{ord}
 \DeclareMathOperator{\supp}{supp}
\newcommand{\ito}{\overset\sim\to}
\newcommand{\DP}{\negthinspace : \negthinspace}
\newcommand{\LK}{[\![}
\newcommand{\RK}{]\!]}
\renewcommand{\P}{\mathbb P}
\renewcommand{\t}{\, | \,}
\renewcommand{\time}{\negthinspace \times \negthinspace}
\numberwithin{equation}{section}
\address{University of Graz, NAWI Graz \\
Institute for Mathematics and Scientific Computing \\
Heinrichstra{\ss}e 36\\
8010 Graz, Austria}
\email{alfred.geroldinger@uni-graz.at, qinghai.zhong@uni-graz.at}
\urladdr{http://imsc.uni-graz.at/geroldinger, http://qinghai-zhong.weebly.com/}
\author{Alfred Geroldinger   and Qinghai Zhong}
\thanks{This work was supported by
the Austrian Science Fund FWF, Project Number P28864-N35}
\keywords{transfer Krull monoids, weakly Krull monoids, sets of lengths, elasticity}
\subjclass[2010]{13A05, 13F05, 16H10, 16U30, 20M13}
\begin{document}
\title{Long sets of lengths with maximal elasticity}

\begin{abstract}
We introduce a new invariant describing the structure of sets of lengths in atomic monoids and domains. For an atomic monoid $H$, let $\Delta_{\rho} (H)$ be the set of all positive integers $d$ which occur as differences of arbitrarily long arithmetical progressions contained in sets of lengths having maximal elasticity $\rho (H)$. We study $\Delta_{\rho} (H)$ for transfer Krull monoids of finite type (including commutative Krull domains with finite class group) with methods from additive combinatorics, and also for a class of weakly Krull domains (including orders in algebraic number fields) for which we use ideal theoretic methods.
\end{abstract}

\maketitle

\medskip
\section{Introduction} \label{1}
\medskip

Let $H$ be a monoid or domain such that every (non-zero and non-unit) element can be written as a finite product of atoms. If $a=u_1 \cdot \ldots \cdot u_k$ is a factorization into atoms $u_1, \ldots, u_k$, then $k$ is called the length of this factorization and the set $\mathsf L (a) \subset \N$ of all possible factorization lengths is called the set of lengths of $a$. The system $\mathcal L (H) = \{ \mathsf L (a) \mid a \in H \}$ of all sets of lengths is a well-studied means of describing the non-uniqueness of factorizations of $H$. If there is some $a \in H$ such that $|\mathsf L (a)|>1$, then $\mathsf L (a^n) \supset \mathsf L (a) + \ldots + \mathsf L (a)$ whence $\mathsf L (a^n)$ has more than $n$ elements for every $n \in \N$. Weak ideal theoretic conditions on $H$ guarantee that all sets of lengths are finite. Then, apart from the trivial case where all sets of lengths are singletons, $\mathcal L (H)$ is a family of finite subsets of the integers containing arbitrarily long sets. Only in a couple of very special cases the system $\mathcal L (H)$ can be written down explicitly. In general, $\mathcal L (H)$ is described by parameters such as the set of distances $\Delta (H)$, the  elasticity $\rho (H)$, and others. We recall the definition of the elasticity $\rho (H)$. If $L \in \mathcal L (H)$, then $\rho (L)= \sup (L)/\min L$ is the elasticity of $L$ (thus $\rho (L)=1$ if and only if $|L|=1$). The elasticity $\rho (H)$ of $H$ is the supremum of all $\rho (L)$ over all $L \in \mathcal L (H)$, and we say that it is accepted if there is some $L \in \mathcal L (H)$ such that $\rho (H) = \rho (L)< \infty$.

The goal of the present paper is to study the possible differences of arbitrarily long arithmetical progressions contained in sets of lengths  having maximal possible elasticity.
More precisely, suppose that $H$ has accepted elasticity with $1 < \rho (H) < \infty$. Then let $\Delta_{\rho} (H)$ denote the set of all $d \in \N$ with the following property: for every $k \in \N$, there is some $L_k \in \mathcal L (H)$ with $\rho (L_k) = \rho (H)$ and $L_k = y_k + \big( L_k' \cup \{0, d, \ldots, \ell_k d\} \cup L_k''\big) \subset y_k + d \Z$, where $y_k \in \Z$, $\max L_k' < 0$, $\min L_k'' > \ell_k d$, and $\ell_k \ge k$. We study $\Delta_{\rho} (H)$ for transfer Krull monoids of finite type and for classes of weakly Krull monoids.

A transfer Krull monoid  of finite type is a monoid having a weak transfer homomorphism  to a monoid  of zero-sum sequences over a finite subset   of an abelian group. Transfer homomorphisms preserve factorization lengths which implies that the systems of sets of lengths of the two monoids coincide.
This setting includes commutative Krull  domains with finite class group, but also classes of not necessarily integrally closed noetherian domains, and classes of  non-commutative Dedekind prime rings (for a detailed discussion see the beginning of Section \ref{3}).

Let $H$ be a transfer Krull monoid over a finite abelian group $G$ such that $|G|\ge 3$. Then $\mathcal L (H) = \mathcal L \big( \mathcal B (G) \big) =: \mathcal L (G)$, whence sets of lengths of $H$ can be studied in the monoid $\mathcal B (G)$ of zero-sum sequences over $G$ and  methods from additive combinatorics can be applied.
This setting has found wide interest in the literature  (\cite{C-F-G-O16, Ge-HK06a, Sc16a}). Our main results on $\Delta_{\rho} (\cdot)$ for transfer Krull monoids are summarized after Conjecture \ref{3.20}. In a discussion preceding Lemma \ref{3.2} we  review the tools from zero-sum theory required for studying $\Delta_{\rho} (\cdot)$ and their state of the art. A central question in all studies of systems of sets of lengths is the so-called Characterization Problem, which asks whether for two non-isomorphic finite abelian groups $G$ and $G'$ (with Davenport constant $\mathsf D (G)\ge 4$)  the  systems of sets of lengths $\mathcal L (G)$ and $\mathcal L (G')$ can coincide. The standing conjecture is that this is not possible (see \cite[Section 6]{Ge16c} for a survey, and \cite{Ge-Sc16a, Ge-Zh17b, Zh17a} for recent progress), and the new invariant $\Delta_{\rho} (\cdot)$ turns out to be  a further useful tool in these investigations (Corollary \ref{3.19}).

Within factorization theory the case of (transfer) Krull monoids and domains is by far the best understood case. Much less is known in the non-Krull case. The most investigated class are Mori domains $R$ with non-zero conductor $\mathfrak f$, finite $v$-class group, and a finiteness condition on the factor ring $R/\mathfrak f$ (see \cite{Fo-Ho-Lu13a,Ka16b}). However, in the overwhelming number of situations only abstract arithmetical finiteness results are known but no precise results (such as in the Krull case). Mori domains, which are weakly Krull, have a defining family of one-dimensional local Mori domains which provides a strategy for obtaining precise results.  In Section \ref{4} we study $\Delta_{\rho} (\cdot)$ for such weakly Krull Mori domains and
 for their monoids of $v$-invertible $v$-ideals under natural algebraic finiteness assumptions which are satisfied, among others,  by orders in algebraic number fields (Theorem \ref{4.4}). This is done by studying  the local case first and then the local results are glued together with the help of the associated $T$-block monoid. Our results on $\Delta_{\rho} (\cdot)$ allow to reveal further classes of weakly Krull monoids which are not transfer Krull (Corollary \ref{4.6}).

\medskip
\section{Background on  sets of lengths} \label{2}
\medskip

For integers $a$ and $b$, we denote by $[a,b]=\{ x \in \Z \mid a \le x \le b\}$ the discrete interval between $a$ and $b$. Let $L \subset \Z$ be a subset. If $d \in \mathbb N$ \ and \ $\ell, M \in \mathbb N_0$, then $L$ \ is called an {\it almost arithmetical
progression} ({\rm AAP} for short) with  difference $d$, length $\ell$, and
bound $M$ if
\begin{equation} \label{eq:defAAP}
L = y + (L' \cup \{0, d , \ldots, \ell d\} \cup L'') \subset y + d \mathbb Z
\end{equation}
where $y\in \Z$,   $L' \subset [-M,-1]$, and $L'' \subset \ell d + [1, M]$.
If $L' \subset \Z$, then $L+L' = \{a+b \mid a \in L, b \in L'\}$ denotes the sumset. If $L = \{m_1, \ldots, m_k \} \subset \Z$ is finite with $k \in \N_0$ and $m_1 < \ldots < m_k$, then $\Delta (L)=\{m_i - m_{i-1} \mid i \in [2,k] \} \subset \N$ denotes the set of distances of $L$. If $L \subset \N$ is a subset of the positive integers, then  $\rho (L) = \sup L/ \min L$ denotes its elasticity, and for convenience we set $\rho ( \{0\})=1$.

Let $G$ be a finite abelian group. Let $r \in \N$ and    $(e_1, \ldots, e_r)$ be an $r$-tuple of elements of $G$. Then $(e_1, \ldots, e_r)$ is said to be independent if $e_i \ne 0$ for all $i \in [1,r]$ and if for all $(m_1, \ldots, m_r) \in \Z^r$ an equation $m_1e_1+ \ldots + m_re_r=0$ implies that $m_ie_i=0$ for all $i \in [1,r]$.   Furthermore, $(e_1, \ldots, e_r)$ is said to be a basis of $G$ if it is independent and $G = \langle e_1 \rangle \oplus \ldots \oplus \langle e_r \rangle$. For every $n \in \N$, we denote by $C_n$ an additive cyclic group of order $n$.

By a {\it monoid}, we mean an associative semigroup with unit element, and if not stated otherwise we use multiplicative notation. Let $H$ be a monoid with unit-element $1=1_H \in H$. We denote by $H^{\times}$ the group of invertible elements and say that $H$ is reduced if $H^{\times} = \{1\}$. Let $S \subset H$ be a subset and $a \in S$. Then $[S] \subset H$ denotes the submonoid generated by $S$, and $[a] = [\{a\}] = \{a^k \mid k \in \N_0\}$ is the submonoid generated by $a$.  We say that the subset  $S$ is  divisor-closed if $a, b \in H$ and $ab \in S$ implies that $a, b \in S$. We denote by $\LK S \RK$  the smallest divisor-closed submonoid containing $S$, and $\LK a \RK = \LK \{a\} \RK$ is the smallest divisor-closed submonoid of $H$ containing $a$. The monoid $H$ is said to be unit-cancellative if for each two elements $a,u \in H$  any of the  equations $au=a$ or $ua=a$ implies that $u \in H^{\times}$. Clearly, every cancellative monoid is unit-cancellative.

Suppose that $H$ is unit-cancellative. An element $u \in H$ is said to be irreducible (or an atom) if $u \notin H^{\times}$ and any equation of the form $u=ab$, with $a, b \in H$, implies that $a \in H^{\times}$ or $b \in H^{\times}$. Let  $\mathcal A (H)$ denote the set of atoms, and we say that $H$ is atomic if every non-unit is a finite product of atoms. If $H$ satisfies the ascending chain condition on principal left ideals and on principal right ideals, then $H$ is atomic  (\cite[Theorem 2.6]{Fa-Tr18a}). If $a \in H \setminus H^{\times}$ and $a=u_1 \cdot \ldots \cdot u_k$, where $k \in \N$ and $u_1, \ldots, u_k \in \mathcal A (H)$, then $k$ is a factorization length of $a$, and
\[
\mathsf L_H (a) = \mathsf L (a) = \{k \mid k \ \text{is a factorization length of} \ a \} \subset \N
\]
denotes the {\it set of lengths} of $a$. It is convenient to set $\mathsf L (a) = \{0\}$ for all $a \in H^{\times}$ (note that every divisor of an invertible element is again invertible). The family
\[
\mathcal L (H) = \{\mathsf L (a) \mid a \in H \}
\]
is called the {\it system of sets of lengths} of $H$, and
\[
\rho (H) = \sup \{\rho (L) \mid L \in \mathcal L (H) \} \in \R_{\ge 1} \cup \{\infty\}
\]
denotes the {\it elasticity} of $H$. We say that a monoid $H$  has {\it accepted elasticity}
\begin{itemize}
\item if it is atomic unit-cancellative with elasticity $\rho (H)< \infty$, and there is an $L \in \mathcal L (H)$ such that $\rho (L)=\rho (H)$.
\end{itemize}
Let  $H$  be a monoid with  accepted elasticity. Then $\sup L < \infty$ for every $L \in \mathcal L (H)$ and for a subset $S \subset H$,
\[
\Delta_H (S) =  \bigcup_{a \in S} \Delta ( \mathsf L_H (a) ) \ \subset \N
\]
denotes the set of distances of $S$.   Let $S \subset H$ be a divisor-closed submonoid and $a \in S$. Then $S^{\times} = H^{\times}$, $\mathcal A (S) = \mathcal A (H)$, $\mathsf L_S (a) = \mathsf L_H (a)$, and $\mathcal L (S) \subset \mathcal L (H)$. Furthermore, we have $\Delta_S (S) = \Delta_H (S)$ and we set $\Delta (S) = \Delta_S (S)$ and $\Delta (H) = \Delta_H (H)$. By definition we have $\Delta (H)= \emptyset$ if and only if $\rho (H)=1$.

For any set $P$, we denote by $\mathcal F (P)$ the free abelian monoid with basis $P$. If
\[
a = \prod_{p \in P}p^{\mathsf v_p (a)} \in \mathcal F (P), \quad \text{where} \quad \mathsf v_p \colon \mathcal F (P) \to \N_0 \quad \text{is the $p$-adic exponent} \,,
\]
then $|a|=\sum_{p \in P} \mathsf v_p (a) \in \N_0$ is the length of $a$.
Let $D$ be a monoid. A submonoid $H \subset D$ is said to be {\it saturated} if $a \in D$, $b \in H$, and ($ab \in H$ or $ba \in H$) imply that $a \in H$. A commutative monoid $H$ is Krull if its associated reduced monoid is a saturated submonoid of a free abelian monoid (\cite[Theorem 2.4.8]{Ge-HK06a}). A commutative domain  is Krull if and only if its monoid  of non-zero elements is a Krull monoid. The theory of commutative Krull monoids and domains is presented in \cite{HK98, Ge-HK06a}.

Let $G$ be an additive abelian group and $G_0 \subset G$ a nonempty subset. An element
\[
S = g_1 \cdot \ldots \cdot g_{\ell} = \prod_{g \in G_0} g^{\mathsf v_g (S)} \in \mathcal F (G_0)
\]
is said to be a zero-sum sequence if its sum $\sigma (S) = g_1 + \ldots + g_{\ell} = \sum_{g \in G_0} \mathsf v_g (S)g$ equals zero. Then the set $\mathcal B (G_0)$ of all zero-sum sequences over $G_0$ is a submonoid, and since $\mathcal B (G_0) \subset \mathcal F (G_0)$ is  saturated,  it is a commutative Krull monoid. If $S$ is  as above, then $|S|=\ell \in \N_0$ is the length of $S$ and $\supp (S) = \{g_1, \ldots, g_{\ell}\} \subset G$ denotes its support.
The monoid $\mathcal B (G_0)$  plays a crucial role in Section \ref{3}. It is usual to set $\mathcal L (G_0) :=
\mathcal L \bigl( \mathcal B(G_0) \bigr)$, $\mathcal A (G_0) := \mathcal A \bigl( \mathcal B (G_0) \bigr)$, $\rho (G_0) := \rho \bigl( \mathcal B (G_0) \bigr)$, and $\Delta (G_0) := \Delta \bigl( \mathcal B (G_0) \bigr)$ (although this is an abuse of notation, it will never lead to confusion). If $G_0$ is finite, then $\mathcal A (G_0)$ is finite and
\[
\mathsf D (G_0) = \max \{ |U| \mid U \in \mathcal A (G_0) \} \in \N
\]
denotes the {\it Davenport constant} of $G_0$.

\smallskip
Now we introduce the new arithmetical invariant, $\Delta_{\rho} (\cdot)$, to be studied in the present paper. For convenience we repeat the definition of the well-studied invariant $\Delta_1 (\cdot)$ (\cite[Definition 4.3.12]{Ge-HK06a}).

\smallskip
\begin{definition} \label{2.1}
Let $H$ be an atomic unit-cancellative  monoid.
\begin{enumerate}
\item Let $\Delta_{1} (H)$ denote the set of all $d \in \N$ having the following property{\rm \,:}
      \begin{itemize}
      \item[] For every $k \in \N$, there is some $L_k \in \mathcal L (H)$ which is an AAP with difference $d$ and  length at least $k$.
      \end{itemize}

\item Let $\Delta_{\rho} (H)$ denote the set of all $d \in \N$ having the following property{\rm \,:}
      \begin{itemize}
      \item[] For every $k \in \N$, there is some $L_k \in \mathcal L (H)$ which is an AAP with difference $d$, length at least $k$, and with $\rho (L_k)=\rho (H)$.
      \end{itemize}

\item We set $\Delta_{\rho}^* (H) =       \{ \min \Delta_H ( [ a ] ) \mid a \in H \ \text{with} \ \rho ( \mathsf L (a) ) = \rho (H) \}$.
\end{enumerate}
\end{definition}

By definition, we have
\begin{equation} \label{eq:basic1}
\Delta_{\rho} (H) \subset \Delta_1 (H) \subset \Delta (H) \,,
\end{equation}
and  $\Delta_{\rho}(H)=\emptyset$ if  $H$ does not have accepted elasticity.

\smallskip
The set $\Delta_1 (H)$ is studied with the help of the set $\Delta^* (H)$ which is defined as the set of all $d \in \N$ having the following property (\cite[Definition 4.3.12]{Ge-HK06a}){\rm \,:}
      \begin{itemize}
      \item[] There is a divisor-closed submonoid $S \subset H$ with $\Delta (S) \ne \emptyset$ and $d = \min \Delta (S)$.
      \end{itemize}
If $H$ is a commutative cancellative BF-monoid, then, by \cite[Proposition 4.3.14]{Ge-HK06a},
\begin{equation} \label{eq:Delta^*}
\Delta^* (H) = \{ \min \Delta ( \LK a \RK ) \mid a \in H \ \text{with} \ \Delta ( \LK a \RK ) \ne \emptyset \} \,.
\end{equation}
The sets $\Delta^* (H)$, called the set of minimal distances of $H$, and   $\Delta_1 (H)$ have found wide attention, so far mainly for transfer Krull monoids over finite abelian groups (\cite{Ge-Zh16a, Ge-Sc16a, Ge-Zh17b, Zh17a, Pl-Sc18a}).

\smallskip
In the present paper we study $\Delta_{\rho} (H)$, and the set $\Delta_{\rho}^* (H)$ is a technical tool to do so.
The relationship between the two sets is the topic of   Lemma \ref{2.4}. In particular, we have $\emptyset \ne \Delta_{\rho}^* (H)  \subset \Delta_{\rho} (H)$ (provided that $H$ has accepted elasticity $\rho (H)> 1$). Equations \eqref{eq:Delta^*} and \eqref{eq:Delta-rho^*} reveal the formal correspondence between $\Delta^* (H)$ and $\Delta_{\rho}^* (H)$ in the case of commutative monoids.
However, there  exist commutative monoids $H$ and divisor-closed submonoids $S \subset H$ with $\rho (S)=\rho (H)>1$ such that $\min \Delta (S) \notin \Delta_{\rho} (H)$ (use Theorem \ref{3.5} with $S=H \in \{ \mathcal B (C_4), \mathcal B (C_6), \mathcal B (C_{10} \}$). Thus,  in contrast to \eqref{eq:Delta^*}, in Equation \eqref{eq:Delta-rho^*} we cannot replace $\LK a \RK$ by an arbitrary divisor-closed submonoid.

In contrast to the formal similarity in the definitions, the invariants $\Delta_{\rho} (H)$ and $\Delta_1 (H)$ show a very different behavior (in particular for transfer Krull monoids over finite abelian groups, see Section \ref{3}). Thus the additional requirement on the elasticity is a very strong one.

\smallskip
We start with a technical lemma analyzing the set $\Delta_{\rho}^* (H)$.

\medskip
\begin{lemma} \label{2.2}
Let $S \subset H$ be a submonoid with $\Delta_H (S) \ne \emptyset$.
\begin{enumerate}
\item $\min \Delta_H (S) = \gcd \Delta_H (S)$.

\smallskip
\item If $H$ is commutative, then $\min \Delta ( \LK S \RK) = \min \Delta_H ( \LK S \RK) = \min \Delta_H (S)$ whence
      \begin{equation} \label{eq:Delta-rho^*}
      \Delta_{\rho}^* (H) =       \{ \min \Delta ( \LK  a \RK ) \mid a \in H \ \text{with} \ \rho ( \mathsf L (a) ) = \rho (H) \} \,.
      \end{equation}

\smallskip
\item Let $a, b  \in S$ with $\rho (\mathsf L_H (a))= \rho (\mathsf L_H (b))= \rho (H)$.  Then $\rho (\mathsf L_H (ab))= \rho (H)$. In particular, $\rho (\mathsf L_H (a^k))=\rho (H)$ for every $k \in \N$ and $\rho ( \LK a \RK ) = \rho (H)$.
\end{enumerate}
\end{lemma}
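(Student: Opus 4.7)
The plan is to prove the three parts in order, each relying on the previous as needed. The common engine is the submultiplicativity $\mathsf L(xy) \supset \mathsf L(x) + \mathsf L(y)$ valid in any unit-cancellative monoid by concatenating factorizations, which immediately yields $\sup \mathsf L(xy) \ge \sup \mathsf L(x) + \sup \mathsf L(y)$ and $\min \mathsf L(xy) \le \min \mathsf L(x) + \min \mathsf L(y)$.

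For part (1), the divisibility $\gcd \Delta_H(S) \mid \min \Delta_H(S)$ is automatic, so it suffices to show that $e := \min \Delta_H(S)$ divides every $d \in \Delta_H(S)$. I would choose $a \in S$ witnessing $e$, with $k$ and $k+e$ consecutive in $\mathsf L(a)$, and $a_1 \in S$ with $\ell, \ell + d \in \mathsf L(a_1)$. Writing $d = qe + r$ with $0 \le r < e$ and taking $m \ge q$, form $a^m a_1 \in S$. Submultiplicativity gives $\mathsf L(a^m a_1) \supset \{mk + ie : 0 \le i \le m\} + \{\ell, \ell + d\}$, and after translating, this subset contains both $qe$ and $qe + r$ with no point of the subset strictly between. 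Hence the successor of $qe$ in $\mathsf L(a^m a_1)$ sits within distance $r$ of $qe$; if $r > 0$, this positive distance belongs to $\Delta_H(S)$ yet is strictly less than $e$, a contradiction. Thus $r = 0$ and $e \mid d$.

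For part (2), for a divisor-closed submonoid $T \subset H$ the factorization lengths satisfy $\mathsf L_T(a) = \mathsf L_H(a)$ for every $a \in T$, so $\Delta(\LK S \RK) = \Delta_H(\LK S \RK) \supset \Delta_H(S)$, giving $\min \Delta_H(\LK S \RK) \le \min \Delta_H(S)$. For the reverse, use the commutative description $\LK S \RK = \{a \in H : a \mid s \text{ for some } s \in S\}$, valid because this set is itself a divisor-closed submonoid containing $S$. Given $a \in \LK S \RK$ and $d \in \Delta(\mathsf L(a))$, write $s = aa'$ with $s \in S$; then $\mathsf L(s) \supset \mathsf L(a) + \mathsf L(a')$ realises $d$ as a difference of two elements of $\mathsf L(s)$, so $d$ lies in the subgroup $(\gcd \Delta(\mathsf L(s))) \, \Z$. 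Since $s \in S$, part (1) gives $\min \Delta_H(S) \mid \gcd \Delta(\mathsf L(s)) \mid d$. The identity \eqref{eq:Delta-rho^*} then follows by specializing $S = [a]$, using $\LK [a] \RK = \LK a \RK$.

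For part (3), with $a, b \in S$ both achieving $\rho(H)$, the submultiplicativity bounds combined with $\sup \mathsf L(a) = \rho(H) \min \mathsf L(a)$ and $\sup \mathsf L(b) = \rho(H) \min \mathsf L(b)$ give $\sup \mathsf L(ab) \ge \rho(H)\bigl(\min \mathsf L(a) + \min \mathsf L(b)\bigr) \ge \rho(H) \min \mathsf L(ab)$, so $\rho(\mathsf L(ab)) \ge \rho(H)$; the reverse inequality is the definition of $\rho(H)$. Induction on $k$ then yields $\rho(\mathsf L(a^k)) = \rho(H)$. Finally $\mathcal L(\LK a \RK) \subset \mathcal L(H)$ forces $\rho(\LK a \RK) \le \rho(H)$, and the presence of $\mathsf L(a)$ with $\rho(\mathsf L(a)) = \rho(H)$ in $\mathcal L(\LK a \RK)$ gives equality. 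The one genuinely subtle step is the reverse direction in part (2): an element $d \in \Delta(\mathsf L(a))$ with $a \in \LK S \RK$ generally appears only as an arbitrary difference in $\mathsf L(s)$, not as a gap between consecutive lengths, so part (1) applied to $S$ is crucial for passing from this weaker statement to divisibility by $\min \Delta_H(S)$.
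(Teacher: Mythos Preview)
Your proof is correct and follows essentially the same route as the paper's. The only noteworthy difference is in the reverse inequality of part (2): the paper picks $b \in \LK S \RK$ realizing $\min \Delta(\LK S \RK)$ and observes directly that $\min \Delta(\mathsf L(bc)) \le \min \Delta(\mathsf L(b))$ for $bc \in S$, avoiding any appeal to part (1); your argument instead shows the stronger fact that $\min \Delta_H(S)$ divides every $d \in \Delta_H(\LK S \RK)$ by routing through $\gcd \Delta(\mathsf L(s))$, which requires part (1) exactly as you flagged in your closing remark.
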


\begin{proof}
1. It is sufficient to prove that
$\min \Delta_H(S)\t d'$ for every $d'\in \Delta_H(S)$. Let $d=\min \Delta_H(S)$ and assume to the contrary that there exists $d'\in \Delta_H(S)$ such that $d\nmid d'$.

We set $d_0=\gcd(d,d')$. Then  $d_0<d$ and there exist $x, y\in \N$ such that $d_0=xd-yd'$. Let $a_1,a_2\in S$ such that $\{\ell_1, \ell_1+d\}\subset \mathsf L_H (a_1)$ and $\{\ell_2-d', \ell_2\}\subset \mathsf L_H (a_2)$. Thus $\{x\ell_1, x\ell_1+d, \ldots, x\ell_1+xd\}\subset \mathsf L_H (a_1^{x})$ and $\{y\ell_2-yd', y\ell_2-(y-1)d', \ldots, y\ell_2\}\subset \mathsf L_H (a_2^{y})$. Therefore $\{x\ell_1+y\ell_2, x\ell_1+y\ell_2+xd-yd'\}\subset \mathsf L_H (a_1^{x} a_2^{y})$ which implies that $d\le xd-yd'=d_0$, a contradiction.

\smallskip
2. Suppose that $H$ is commutative. Since $S \subset \LK S \RK$ and $\LK S \RK \subset H$ is divisor-closed, it follows that
\[
\min \Delta ( \LK S \RK) = \min \Delta_H ( \LK S \RK) \le  \min \Delta_H (S) \,.
\]
To verify the reverse inequality, let $b \in \LK S \RK$ with $\min \Delta ( \mathsf L_H (b) ) = \min \Delta ( \LK S \RK )$. There is a $c \in H$ such that $bc \in S$. Since $\mathsf L_H (b) + \mathsf L_H (c) \subset \mathsf L_H (bc)$, we infer that
\[
\min \Delta_H (S) \le \min \Delta (\mathsf L_H (bc)) \le \min \Delta ( \mathsf L_H (b)) = \min \Delta ( \LK S \RK) \,.
\]
In particular, if $S = [a]$, then $\min \Delta ( \LK a \RK) =  \min \Delta_H ( [a] )$ and hence the equation for $\Delta_{\rho}^* (H)$ follows.

\smallskip
3. Since $\mathsf L (a) + \mathsf L (b) \subset \mathsf L (ab)$, it follows that
\[
\min \mathsf L (ab) \le  \min \mathsf L (a) + \min \mathsf L (b ) \le  \max \mathsf L (a) +  \max \mathsf L (b) \le \max \mathsf L (ab) \,,
\]
and hence
\[
\rho (H) \ge \rho ( \mathsf L (a b ) ) = \frac{\max \mathsf L (ab)}{\min \mathsf L(ab)} \ge \frac{\max \mathsf L(a)+ \max \mathsf L (b)}{\min \mathsf L (a) + \min \mathsf L (b )} \ge \min \Big\{ \frac{\max \mathsf L(a)}{\min \mathsf L (a)}, \frac{\max \mathsf L(b)}{\min \mathsf L (b)} \Big\}  =  \rho (H) \,.
\]
The in particular statement follows by induction on $k$.
\end{proof}

\smallskip
We continue with a simple observation on  the structure of the sets $L_k$ -- popping up in the definition of $\Delta_{\rho} (H)$ -- for all monoids $H$ under consideration. To do so, we need a further definition. Let $d \in \N$, \ $M \in \N_0$, \ and \ $\{0,d\} \subset \mathcal D
\subset [0,d]$. A subset $L \subset \Z$ is called an {\it almost
arithmetical multiprogression} \ ({\rm AAMP} \ for
      short) \ with \ {\it difference} \ $d$, \ {\it period} \ $\mathcal D$,
      \  and \ {\it bound} \ $M$, \ if
\[
L = y + (L' \cup L^* \cup L'') \, \subset \, y + \mathcal D + d \Z
\]
where \ $y \in \mathbb Z$ is a shift parameter,
\begin{itemize}
\item  $L^*$ is finite nonempty with $\min L^* = 0$ and $L^* =
       (\mathcal D + d \Z) \cap [0, \max L^*]$, and

\item  $L' \subset [-M, -1]$ \ and \ $L'' \subset \max L^* + [1,M]$.
\end{itemize}
The following characterization of $\Delta_{\rho} (H)$ follows from the very definitions.

\medskip
\begin{lemma} \label{2.3}
Let $H$ be a monoid with accepted elasticity and with finite non-empty set of distances, and let $M \in \N$. Suppose that every $L \in \mathcal L (H)$ is an \text{\rm AAMP} with some difference $d \in \Delta (H)$ and bound $M$. Then $\Delta_{\rho} (H)$ is the set of all $d \in \N$ with the following property{\rm \,:} for every $k \in \N$ there is some $a_k \in H$ such that $\rho (\mathsf L (a_k)) = \rho (H)$ and
\[
\mathsf L (a_k) = y + (L' \cup \{0, d , \ldots, \ell d\} \cup L'') \subset y + d \mathbb Z
\]
where $y \in \Z$, $\ell \ge k$, $L' \subset [-M,-1]$, and $L'' \subset \ell d + [1,M]$.
\end{lemma}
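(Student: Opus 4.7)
The inclusion ``$\supseteq$'' is immediate from Definition~\ref{2.1}, since an AAP with difference $d$ and bound $M$ is, in particular, an AAP with difference $d$. For the reverse inclusion, the plan is to exploit that on a suitable $L=\mathsf L(a)$ two structural descriptions must coexist: the AAP structure granted by $d\in\Delta_\rho(H)$ (with some a priori uncontrolled bound) and the AAMP structure granted by hypothesis (with bound $M$). Given $k\in\N$, set $D^*:=\max\Delta(H)<\infty$ and pick $k'\ge k+2\lceil M/d\rceil+D^*/d$. Definition~\ref{2.1} yields $a\in H$ with $\rho(\mathsf L(a))=\rho(H)$ and $L:=\mathsf L(a)=y+(L_1'\cup\{0,d,\ldots,\ell d\}\cup L_1'')\subset y+d\Z$, where $\ell\ge k'$. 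By hypothesis, the same $L$ is also an AAMP of bound $M$, say $L=y^*+(\tilde L'\cup L^*\cup\tilde L'')$ with $L^*=(\mathcal D+d^*\Z)\cap[0,\max L^*]$, $d^*\in\Delta(H)$, and $\{0,d^*\}\subseteq\mathcal D\subseteq[0,d^*]$. The goal is to show $L^*=\{0,d,2d,\ldots,qd\}$ for some $q\ge k$, whereupon the AAMP description is already the desired AAP of difference $d$ and bound $M$.

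First, $L\subset y+d\Z$ together with $0\in L^*$ forces $y^*\equiv y\pmod d$ and hence $L^*\subset d\Z$. A direct counting argument then shows that each of the length-$M$ boundary regions $y^*+\tilde L'$ and $y^*+\tilde L''$ contains at most $\lceil M/d\rceil$ terms of the embedded AP $\{y,y+d,\ldots,y+\ell d\}$, so after translation by $-y^*$, the set $L^*$ contains a sub-AP $A'$ of common difference $d$ and length $|A'|\ge\ell+1-2\lceil M/d\rceil$.

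The crux is to pin down $\mathcal D$. Writing elements of $L^*$ in the form $e+md^*$ with $e\in\mathcal D$, the containment $L^*\subset d\Z$, combined with the availability of two values of $m$ with $md^*\in L^*$ (which follows from $\max L^*\ge d^*$, itself ensured by $|A'|\ge d^*/d$), forces $d\mid d^*$ and thereafter $d\mid e$ for every $e\in\mathcal D$; hence $\mathcal D\subseteq\{0,d,2d,\ldots,d^*\}$. Conversely, reducing $A'$ modulo $d^*$ produces a coset of the subgroup $\langle d\rangle\le\Z/d^*\Z$ of cardinality $d^*/d$, and once $|A'|\ge d^*/d+1$ this coset is entirely represented in $\mathcal D\bmod d^*$, giving $\{0,d,\ldots,d^*-d\}\subseteq\mathcal D$. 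Together with $d^*\in\mathcal D$ the two containments yield $\mathcal D=\{0,d,\ldots,d^*\}$, whence $\mathcal D+d^*\Z=d\Z$ and $L^*=d\Z\cap[0,\max L^*]=\{0,d,\ldots,qd\}$ with $q:=\max L^*/d$. Substituting back, $L$ is an AAP of difference $d$ and bound $M$; the estimate $q+1=|L^*|\ge|A'|\ge k'+1-2\lceil M/d\rceil$ combined with the choice $k'\ge k+2\lceil M/d\rceil$ gives $q\ge k$.

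The main obstacle is forcing $\mathcal D$ to be the complete progression $\{0,d,\ldots,d^*\}$: without this, $L^*$ would be a strictly thinner periodic subset of $d\Z$ and would admit no arbitrarily long sub-AP of common difference $d$. This is exactly where the finiteness of $\Delta(H)$ is needed---it bounds $d^*$ uniformly by $D^*$, so a single choice of $k'$ produces a sub-AP long enough to sweep out every residue of $\langle d\rangle$ modulo $d^*$ regardless of which particular $d^*\in\Delta(H)$ arises in the AAMP description of $L$.
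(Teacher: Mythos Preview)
Your argument is correct. The paper itself gives no proof beyond the one-line remark ``the following characterization of $\Delta_{\rho}(H)$ follows from the very definitions,'' so you have supplied the details the authors chose to omit. Your approach is the natural one: superimpose the AAP description (coming from $d\in\Delta_\rho(H)$) on the AAMP description (coming from the global hypothesis) and use the long arithmetic progression of difference $d$ inside $L$ to force the period $\mathcal D$ of the AAMP to be the full set $\{0,d,2d,\ldots,d^*\}$, which collapses the AAMP to an AAP of difference $d$ with the same bound $M$.

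A couple of cosmetic points. When you claim $\max L^*\ge d^*$ is ``ensured by $|A'|\ge d^*/d$,'' the clean inequality is $|A'|\ge d^*/d+1$ (so that $\max L^*\ge \max A'-\min A'\ge(|A'|-1)d\ge d^*$); your choice of $k'$ already accommodates this since you included the extra $D^*/d$ term. Also, $D^*/d$ need not be an integer, so strictly one should write $\lceil D^*/d\rceil$ when choosing $k'$; this is harmless since any sufficiently large $k'$ works. Finally, the word ``coset'' for $A'\bmod d^*$ is slightly off --- since $A'\subset d\Z$ it sits in the subgroup $\langle d\rangle\le\Z/d^*\Z$ itself --- but the conclusion that $A'$ sweeps out all of $\{0,d,\ldots,d^*-d\}$ once $|A'|\ge d^*/d$ is correct.
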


\smallskip
The assumption in Lemma \ref{2.3}, that all sets of lengths are AAMPs with global bounds, is a well-studied property in factorization theory. It holds true, among others, for transfer Krull monoids of finite type (studied in Section \ref{3}) and for weakly Krull monoids (as studied in Theorem \ref{4.4}). We refer to \cite[Chapter 4.7]{Ge-HK06a} for a survey on settings where sets of lengths are AAMPs and also to \cite{Ge-Ka10a}. Thus, under this assumption,  the above lemma shows that the sets $L_k$ (in Definition \ref{2.1}.2 of $\Delta_{\rho} (H)$) have globally bounded beginning and end parts $L'$ and $L''$, and the goal is to study the set of possible distances in the  middle part which can get arbitrarily long.

\medskip
\begin{lemma} \label{2.4}
Let $H$ be a monoid with accepted elasticity.
\begin{enumerate}
\item If $\rho (H)>1$, then $\emptyset \ne \Delta_{\rho}^* (H)  \subset \Delta_{\rho} (H)$ and $\min \Delta_{\rho}^* (H) = \min \Delta_{\rho} (H)$. In particular, if $\rho (H)>1$ and $|\Delta (H)|=1$, then $\Delta_{\rho}^* (H)  = \Delta_{\rho} (H) = \Delta (H)$.

\smallskip
\item If $S \subset H$ is a divisor-closed submonoid with $\rho (S)=\rho(H)$, then $\Delta_{\rho} (S) \subset \Delta_{\rho} (H)$.

\smallskip
\item If $H$ is commutative and cancellative with  finitely many atoms up to associates, then \\ $\Delta_{\rho} (H) \subset \{ d \in \N \mid d \ \text{divides some} \ d' \in \Delta_{\rho}^* (H) \}$. In particular, $\max \Delta_{\rho}(H)=\max \Delta_{\rho}^* (H)$.

\smallskip
\item $\Delta_{\rho}(H)= \emptyset$ if and only if $\Delta_1 (H)=\emptyset$ if and only if $\Delta (H)=\emptyset$ if and only if $\rho (H)=1$.
\end{enumerate}
\end{lemma}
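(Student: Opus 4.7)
The plan is to treat the four parts in order, with Part (1) doing the main constructive work and Part (3) carrying the main obstacle. For \textbf{(1)}, I would first secure $\Delta_\rho^*(H) \ne \emptyset$: by acceptance pick $a \in H$ with $\rho(\mathsf L(a)) = \rho(H) > 1$, so $|\mathsf L(a)| > 1$ forces $\Delta_H([a]) \ne \emptyset$ and Lemma \ref{2.2}.2 places $\min \Delta(\LK a\RK) = \min \Delta_H([a])$ into $\Delta_\rho^*(H)$. For the inclusion $\Delta_\rho^*(H) \subset \Delta_\rho(H)$, given $d = \min \Delta_H([a])$ I would use Lemma \ref{2.2}.1 to identify $d = \gcd \Delta_H([a])$ and thereby find $s \in \N$ with $\{\ell, \ell+d\} \subset \mathsf L(a^s)$ for some $\ell$. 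For each $n$, the $n$-fold sumset yields $\{n\ell, n\ell+d, \ldots, n\ell + nd\} \subset \mathsf L(a^{sn})$, while $\mathsf L(a^{sn}) \subset n\ell + d\Z$ (again from the $\gcd$ identity) makes the finite set $\mathsf L(a^{sn})$ an AAP with difference $d$ and length $\ge n$, and $\rho(\mathsf L(a^{sn})) = \rho(H)$ is Lemma \ref{2.2}.3. The min-equality is then immediate: for $d = \min \Delta_\rho(H)$ any witness $a_k$ with $k \ge 2$ places $\min \Delta_H([a_k]) \le d$ into $\Delta_\rho^*(H)$. The $|\Delta(H)| = 1$ subcase is squeezed by \eqref{eq:basic1}.

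\textbf{(2)} is the single observation that $\mathcal L(S) \subset \mathcal L(H)$ preserves lengths and elasticities for divisor-closed $S \subset H$, so every AAP witness in $\mathcal L(S)$ with $\rho(L) = \rho(S) = \rho(H)$ is automatically a witness for $d \in \Delta_\rho(H)$. For \textbf{(4)} the forward implications are \eqref{eq:basic1}, while the reverse direction combines the standing $\Delta(H) = \emptyset \iff \rho(H) = 1$ with Part (1), which embeds $\emptyset \ne \Delta_\rho^*(H) \subset \Delta_\rho(H)$ whenever $\rho(H) > 1$ and forces $\Delta_1(H)$ and $\Delta(H)$ nonempty too.

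\textbf{(3)} is the main obstacle. Given $d \in \Delta_\rho(H)$ with witnesses $(a_k)$, the finite-atoms hypothesis implies only finitely many divisor-closed submonoids $\LK a \RK$ of $H$ arise (each one determined by its subset of atom classes, since these are the only atoms dividing some power of a generator), so by pigeonhole there is an infinite subsequence along which $\LK a_{k_j}\RK = S$ is constant. Lemma \ref{2.2}.3 forces $\rho(S) = \rho(H)$, whence $d^* := \min \Delta(S) \in \Delta_\rho^*(H)$, and the containment $\Delta(\mathsf L(a_{k_j})) \subset \Delta(S) \subset d^*\Z$ immediately yields $d^* \mid d$ and $d^* \le d$; however, the \emph{required} direction is $d \mid d^*$, which reduces to showing $d = d^*$. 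The crux is to rule out $d = j d^*$ with $j \ge 2$: the lower bound $\min \mathsf L(a_{k_j}) \ge k_j d /(\rho(H)-1) \to \infty$ forces the long main AP of $\mathsf L(a_{k_j})$ to sparsely populate the ambient $d^*$-lattice of $\mathsf L(S)$, and combining this with a length-$d^*$ relation $b \in S$ (existing because $d^* = \min \Delta(S)$) together with Lemma \ref{2.2}.3 applied to a suitable product involving $b$ and $a_{k_j}$ should fill in an omitted $d^*$-position, contradicting the claimed AAP structure for large $k_j$. Once $d = d^*$ is established, $d \mid d^*$ is tautological and the max statement follows from $d \le d^* \le \max \Delta_\rho^*(H)$ combined with $\max \Delta_\rho^*(H) \le \max \Delta_\rho(H)$ from Part (1).
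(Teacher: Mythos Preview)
Your treatment of Parts (1), (2), and (4) matches the paper's approach and is correct (the invocation of Lemma~\ref{2.2}.2 in Part (1) is unnecessary since $\Delta_\rho^*(H)$ is defined directly via $[a]$, but this is harmless).

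The gap is in Part (3). Your pigeonhole on the finitely many submonoids $\LK a_k\RK$ fixes $S = \LK a_{k_j}\RK$ and correctly yields $d^* = \min\Delta(S) \in \Delta_\rho^*(H)$ with $d^* \mid d$. But the required conclusion is $d \mid d'$ for \emph{some} $d' \in \Delta_\rho^*(H)$, not $d \mid d^*$ for this particular $d^*$, and your reduction to $d = d^*$ is in general false. The issue is that $\LK a_{k_j}\RK$ may contain atoms that occur in $a_{k_j}$ with bounded multiplicity along the sequence; such atoms can force $\min\Delta(S)$ strictly below $d$ while contributing only to the bounded fringe $L',L''$ of the AAP. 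Concretely, if $a_k = u_1^k u_2^k u_3$ where $\LK u_1 u_2\RK$ has $\min\Delta = 2$ and accepted elasticity $\rho(H)$, but $u_3$ introduces a relation making $\min\Delta(\LK u_1 u_2 u_3\RK) = 1$, then $\mathsf L(a_k)$ is an AAP with difference $d = 2$ (the single copy of $u_3$ affecting only a bounded part), yet your $d^* = 1$. The correct witness in $\Delta_\rho^*(H)$ is $\min\Delta(\LK u_1 u_2\RK) = 2$, not $d^*$. Your proposed contradiction via a product with some $b \in S$ realizing $d^*$ cannot work: Lemma~\ref{2.2}.3 requires $\rho(\mathsf L(b)) = \rho(H)$, which need not hold, and in any case producing a $d^*$-distance in $\mathsf L(b \cdot a_{k_j})$ says nothing about $\mathsf L(a_{k_j})$ itself.

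The paper's argument avoids this by passing to a subsequence along which the atom-multiplicities in $a_k$ split into an unbounded part $\{u_1,\ldots,u_s\}$ and a part bounded by $M_1$, then applying the structure theorem \cite[Theorem~4.3.6]{Ge-HK06a} to $\LK u_1\cdots u_s\RK$ and local tameness \cite[Proposition~4.3.4]{Ge-HK06a} to absorb the bounded atoms. This yields $d \mid d' = \min\Delta(\LK u_1\cdots u_s\RK)$, and a separate limit argument shows $\rho(\LK u_1\cdots u_s\RK) = \rho(H)$ so that $d'$ (or a multiple of it) lands in $\Delta_\rho^*(H)$. The key point you are missing is that one must isolate the atoms with \emph{growing} multiplicity, not the full support.
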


\begin{proof}
1. Suppose that $\rho (H)>1$. Then, by definition, there is an $a \in H$ with $\rho (\mathsf L (a))=\rho (H)>1$ whence $\Delta_H ( [a])\ne \emptyset$ and thus $\Delta_{\rho}^* (H) \ne \emptyset$. To verify that $\Delta_{\rho}^* (H) \subset \Delta_{\rho} (H)$, we set $d = \min \Delta_H (  [a]  )$. Then there is an $\ell \in \N$ such that $d \in \Delta (\mathsf L(a^{\ell}))$ and thus for every $k \in \N$ the set $\mathsf L (a^{k \ell})$ contains an arithmetical progression with difference $d$ and length at least $k$. Since $\min \Delta_H ([a]) = \gcd \Delta_H ([a])$ by Lemma \ref{2.2}.2, $\mathsf L (a^{k \ell})$ is an AAP with difference $d$ and length at least $k$ for every $k \in \N$. By Lemma \ref{2.2}.3, we have $\rho (\mathsf L_H ( a^{k \ell})) = \rho (H)$ and thus $d \in \Delta_{\rho} (H)$.

Since $\Delta_{\rho}^* (H)  \subset \Delta_{\rho} (H)$, it follows that $\min \Delta_{\rho} (H) \le \min \Delta_{\rho}^* (H)$. To verify the reverse inequality, let $d\in \Delta_{\rho} (H)$ be given.  Then there is an $a\in H$ such that $\mathsf L(a)$ is an AAP with
difference $d$, length at least $1$, and $\rho(\mathsf L(a))=\rho(H)$.
Thus  $\min \Delta_H ( [a ] )\in \Delta_{\rho}^*(H)$ by definition,  and clearly we have $\min \Delta_H ( [a ] )\le \min \Delta(\mathsf L(a))=d$.

If $\rho (H)>1$ and $|\Delta (H)|=1$, then the inclusions given in \eqref{eq:basic1} imply that $\Delta_{\rho}^* (H)  = \Delta_{\rho} (H) = \Delta (H)$.

\smallskip
2. Suppose that $S \subset H$ is divisor-closed with $\rho (S)=\rho (H)$. Then for every $a \in S$, we have $\mathsf L_S (a) = \mathsf L_H (a)$, and hence $\mathcal L (S) \subset \mathcal L (H)$. If $d \in \Delta_{\rho} (S)$, then by definition, for every $k \in \N$, there is some $L_k \in \mathcal L (S) \subset \mathcal L (H)$ which is an AAP with difference $d$, length at least $k$, and with $\rho (L_k)=\rho (S)=\rho (H)$, and thus $d \in \Delta_{\rho} (H)$.

\smallskip
3.  Clearly, the in particular statement follows from the asserted inclusion and from the fact that $\Delta_{\rho}^* (H) \subset \Delta_{\rho} (H)$ as shown in 1. We use several times the fact that finitely generated commutative monoids are locally tame and have accepted elasticity (\cite[Theorem 3.1.4]{Ge-HK06a}).

Without restriction we may suppose that $H$ is reduced, and we set $\mathcal A (H) = \{u_1, \ldots, u_t\}$ with $t \in \N$. Let $d \in \Delta_{\rho} (H)$ be given. Then for every $k \in \N$, there is a $b_k \in H$ such that $\rho \bigl( \mathsf L (b_k) \bigr) = \rho (H)$ and $\mathsf L (b_k)$ is an AAP with difference $d$ and length $\ell_k \ge k$. Since $\mathcal A (H)$ is finite, there are a nonempty subset $A \subset \mathcal A (H)$, say $A = \{u_1, \ldots, u_s\}$ with $s \in [1,t]$, a constant $M_1 \in \N_0$, and a subsequence $(b_{m_k})_{k \ge 1}$ of $(b_k)_{k\ge 1}$, say $b_{m_k}=c_k$ for all $k \in \N$, such that, again for all $k \in \N$,
\[
c_k = \prod_{i=1}^t u_i^{m_{k,i}} \quad \text{where} \quad m_{k,i} \ge k \ \text{for} \ i \in [1,s] \quad \text{and} \quad m_{k,i} \le M_1 \ \text{for} \ i \in [s+1,t]
\]
By Theorem \cite[Theorem 4.3.6]{Ge-HK06a} (applied to the monoid $\LK u_1 \cdot \ldots \cdot u_s \RK$), $L_k = \mathsf L ( \prod_{i=1}^s u_i^{m_{k,i}})$ is an AAP with difference $d'=\min \Delta ( \LK u_1 \cdot \ldots \cdot u_s \RK)$ for every $k \in \N$. Since $H$ is locally tame, \cite[Proposition 4.3.4]{Ge-HK06a} implies that there is a constant $M_2 \in \N_0$ such that for every $k \in \N$
\[
\max \mathsf L (c_k) \le \max L_k +M_2 \quad \text{and} \quad \min \mathsf L (c_k) \ge \min L_k - M_2 \,.
\]
Since for every $k \in \N$ there is a $y_k \in \N$ such that $y_k + L_k \subset \mathsf L (c_k)$, we infer that $d$ divides $d'$. Being a divisor-closed submonoid of a finitely generated monoid, the monoid $\LK u_1 \cdot \ldots \cdot u_s \RK$ is finitely generated by \cite[Proposition 2.7.5]{Ge-HK06a}. Thus there is an $a \in \LK u_1 \cdot \ldots \cdot u_s \RK$ such that $\rho (\mathsf L (a)) = \rho (\LK u_1 \cdot \ldots \cdot u_s \RK)$. Since $d$ divides $d' = \min \Delta ( \LK u_1 \cdot \ldots \cdot u_s \RK)$ and $d'$ divides $\min \Delta ( \LK a \RK)$, it follows that $d$ divides $\min \Delta ( \LK a \RK)$.

Next we verify that $\rho (\LK u_1 \cdot \ldots \cdot u_s \RK) = \rho (H)$ from which it follows that $\min \Delta ( \LK a \RK) \in \Delta_{\rho}^* (H)$ by Lemma \ref{2.2}.2. For $k \in \N$, we have
\[
\rho (H) = \frac{\max \mathsf L (c_k)}{\min \mathsf L (c_k)} \le \frac{\max L_k+ M_2}{\min L_k - M_2} \quad \text{and} \quad \frac{\max L_k}{\min L_k} \le \rho (\LK u_1 \cdot \ldots \cdot u_s \RK) \le \rho (H) \,.
\]
If $k$ tends to infinity, then $({\max L_k+ M_2})/({\min L_k - M_2})$ tends to ${\max \mathsf L (c_k)}/{\min \mathsf L (c_k)}$ which implies that $\rho (\LK u_1 \cdot \ldots \cdot u_s \RK) = \rho (H)$.

\smallskip
4. This follows from 1. and from the basic relation given in (\ref{eq:basic1}).
\end{proof}

\medskip
\begin{lemma} \label{2.5}
Let $H$ be a monoid with accepted elasticity. Then for every nonempty subset $\Delta \subset \Delta_{\rho} (H)$ there is a $d \in \Delta_{\rho}(H)$ such that $d \le \gcd \Delta$.
\end{lemma}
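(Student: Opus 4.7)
The plan is to reduce to finite $\Delta$ and then, for $k \in \N$ large enough, exhibit a single element $c_k \in H$ of maximal elasticity whose set of lengths contains two values differing by exactly $\gcd \Delta$, so that the resulting $\min \Delta_H([c_k])$ sits in $\Delta_{\rho}^*(H) \subset \Delta_{\rho}(H)$ (Lemma \ref{2.4}.1) and is bounded above by $\gcd \Delta$. We may assume $\rho(H) > 1$, since otherwise Lemma \ref{2.4}.4 gives $\Delta_{\rho}(H) = \emptyset$ and the hypothesis is vacuous. Set $d = \gcd \Delta$. Because the ideal $\sum_{\delta \in \Delta} \delta \Z \subset \Z$ is generated by finitely many of its generators, we may pick a finite subset $\{d_1, \ldots, d_n\} \subset \Delta$ with $\gcd(d_1, \ldots, d_n) = d$, reducing the problem to the finite case.

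For each $i \in [1,n]$ and each $k \in \N$, the definition of $\Delta_{\rho}(H)$ supplies an element $a_{i,k} \in H$ with $\rho(\mathsf L(a_{i,k})) = \rho(H)$ such that $\mathsf L(a_{i,k})$ is an AAP of difference $d_i$ and length at least $k$; in particular, $\mathsf L(a_{i,k})$ contains a translate of $\{0, d_i, 2 d_i, \ldots, k d_i\}$. I would set $c_k = a_{1,k} \cdot a_{2,k} \cdot \ldots \cdot a_{n,k}$. Iterating Lemma \ref{2.2}.3 gives $\rho(\mathsf L(c_k)) = \rho(H)$, and concatenating factorizations of the $a_{i,k}$ yields the sumset inclusion
\[
\mathsf L(c_k) \, \supset \, \mathsf L(a_{1,k}) + \ldots + \mathsf L(a_{n,k}) \, \supset \, y_k + \bigl( \{0, d_1, \ldots, k d_1\} + \ldots + \{0, d_n, \ldots, k d_n\} \bigr)
\]
for a suitable $y_k \in \Z$.

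The central step is to produce, for $k$ large enough, two elements of $\mathsf L(c_k)$ differing by exactly $d$. This I would handle via Bezout: pick $x_1, \ldots, x_n \in \Z$ with $x_1 d_1 + \ldots + x_n d_n = d$, and decompose each $x_i = x_i^+ - x_i^-$ with $x_i^{\pm} \in \N_0$. Once $k \ge \max_i \max(x_i^+, x_i^-)$, both $\sum_{i=1}^n x_i^+ d_i$ and $\sum_{i=1}^n x_i^- d_i$ lie in the rightmost sumset of the display above, and their difference equals $d$. Hence $\min \Delta(\mathsf L(c_k)) \le d$, and via $\Delta(\mathsf L(c_k)) \subset \Delta_H([c_k])$ we obtain $\min \Delta_H([c_k]) \le d$.

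Setting $d' := \min \Delta_H([c_k])$, the equality $\rho(\mathsf L(c_k)) = \rho(H)$ places $d'$ in $\Delta_{\rho}^*(H)$ by the very definition of the latter, and Lemma \ref{2.4}.1 then yields $d' \in \Delta_{\rho}(H)$ with $d' \le d = \gcd \Delta$, as required. The main obstacle is really just the Bezout step in the third paragraph; because we need only a single gap of size $d$ (rather than a long arithmetic progression of that difference), we bypass any quantitative Sylvester--Frobenius estimate, and neither Lemma \ref{2.2}.3 nor the sumset inclusion requires $H$ to be commutative.
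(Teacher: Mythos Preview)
Your proof is correct and follows essentially the same route as the paper's: form the product $c_k=a_{1,k}\cdots a_{n,k}$, use Lemma~\ref{2.2}.3 to keep maximal elasticity, exhibit $\gcd(d_1,\ldots,d_n)$ as a gap in the sumset $\mathsf L(a_{1,k})+\cdots+\mathsf L(a_{n,k})\subset\mathsf L(c_k)$, and conclude via $\min\Delta_H([c_k])\in\Delta_\rho^*(H)\subset\Delta_\rho(H)$. You are slightly more explicit than the paper in reducing to finite $\Delta$ and in spelling out the B\'ezout step, while the paper notes the marginally stronger conclusion that the resulting $d'$ actually \emph{divides} $\gcd\Delta$ (since $d'=\gcd\Delta_H([c_k])$ by Lemma~\ref{2.2}.1).
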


\begin{proof}
Let $\Delta = \{d_1, \ldots, d_n\} \subset \Delta_{\rho}(H)$ be a nonempty subset. For every $i \in [1,n]$ and every $k \in \N$ there is an $a_{i,k} \in H$ such that $\mathsf L (a_{i,k})$ is an AAP with difference $d_i$, length at least $k$, and with $\rho ( \mathsf L (a_{i,k}) ) = \rho (H)$. By Lemma \ref{2.2}.3,   $\mathsf L (a_{1,k} \cdot \ldots \cdot a_{n,k})$ has elasticity $\rho (H)$ for all $k \in \N$, and thus $d = \min \Delta_H ( [a_{1,k} \cdot \ldots \cdot a_{n,k}]) \in \Delta_{\rho}^* (H) \subset \Delta_{\rho} (H)$. If $k$ is sufficiently large, then $\gcd (d_1, \ldots, d_n)$ occurs as a distance of the sumset $\mathsf L (a_{1,k}) + \ldots + \mathsf L (a_{n,k})$.
Since the sumset
\[
\mathsf L (a_{1,k}) + \ldots + \mathsf L (a_{n,k}) \subset \mathsf L (a_{1,k} \cdot \ldots \cdot a_{n,k})
\]
and $d = \gcd \Delta_H ( [a_{1,k} \cdot \ldots \cdot a_{n,k}])$ by Lemma \ref{2.2}.1, $d$ divides any distance of $\Delta \big( \mathsf L (a_{1,k} \cdot \ldots \cdot a_{n,k}) \big)$ whence it divides $\gcd (d_1, \ldots, d_n)$.
\end{proof}

\medskip
\begin{lemma} \label{2.6}
Let $H=H_1 \times \ldots \times H_n$ where $n \in \N$ and $H_1, \ldots, H_n$ are atomic unit-cancellative monoids.
\begin{enumerate}
\item  Then ${\rho} (H)  = \sup \{\rho (H_1), \ldots, \rho (H_n)\}$, and $H$ has accepted elasticity if and only if there is some $i \in [1,n]$ such that $H_i$ has accepted elasticity $\rho (H_i)=\rho (H)$.

\item Let $s \in [1,n]$ and suppose that  $H_i$ has accepted elasticity  $\rho (H_i) = \rho (H)$ for all $i \in [1,s]$, and that  $H_i$ either does not have accepted elasticity or    $\rho (H_i)< \rho (H)$ for all $i \in [s+1, n]$. We  set
    \[
    \begin{aligned}
    \Delta' & = \big\{ \gcd  \{d_i \mid i \in I\}  \mid d_i \in  \Delta_{\rho} (H_i) \ \text{for all} \ i \in I, \emptyset \ne I \subset [1,s] \big\} \quad \text{and} \\
    \Delta'' & = \big\{ \gcd  \{d_i \mid i \in I\}  \mid d_i \in  \Delta^*_{\rho} (H_i) \ \text{for all} \ i \in I, \emptyset \ne I \subset [1,s] \big\} \,.
    \end{aligned}
    \]
    Then $\Delta' \subset \Delta_{\rho} (H)$, $\Delta'' \subset \Delta^*_{\rho} (H)$, and if $|\Delta (H_i)|=1$ for all $i \in [1,s]$, then $\Delta' = \Delta'' = \Delta^*_{\rho} (H) = \Delta_{\rho} (H)$.
\end{enumerate}
\end{lemma}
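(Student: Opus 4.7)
The plan rests on the identity
\[
\mathsf L_H \bigl((a_1, \ldots, a_n)\bigr) \;=\; \mathsf L_{H_1}(a_1) + \ldots + \mathsf L_{H_n}(a_n) \,,
\]
where the right side is the sumset in $\N_0$ (with the convention $\mathsf L_{H_i}(a_i) = \{0\}$ when $a_i \in H_i^{\times}$). I would first establish this by observing that every atom of $H$ is concentrated in a single coordinate up to units in the others, and then regrouping each factorization of $(a_1, \ldots, a_n)$ according to the active coordinate of each atomic factor. Part~1 then falls out: $\rho(\mathsf L(a))$ equals the weighted average $(\sum_i \max \mathsf L(a_i))/(\sum_i \min \mathsf L(a_i))$, which is bounded by $\max_i \rho(H_i)$; the coordinate embeddings $H_i \hookrightarrow H$ preserve sets of lengths and supply the reverse inequality at the supremum level. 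Equality of a weighted average with its maximal entry then forces $\rho(\mathsf L(a_i)) = \rho(H)$ at every coordinate $i$ with $a_i \notin H_i^{\times}$, which yields the accepted-elasticity criterion in both directions.

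For the inclusions $\Delta' \subset \Delta_{\rho}(H)$ and $\Delta'' \subset \Delta_{\rho}^*(H)$, the plan is to pick, for each chosen $d_i$ (in $\Delta_{\rho}(H_i)$ or $\Delta_{\rho}^*(H_i)$ respectively), an appropriate witness $a_{k,i}$ or $b_i$ in $H_i$ and to combine them coordinatewise in $H$. Lemma~\ref{2.2}.3 (or the weighted-average formula) ensures that the combined elasticity is $\rho(H)$, and the sumset identity reduces the rest to the combinatorial claim that I expect to be the main obstacle: \emph{the sumset of AAPs with differences $d_1, \ldots, d_r$ is itself an AAP with difference $\gcd\{d_1, \ldots, d_r\}$ and with arbitrarily long middle part, provided each component AAP is long enough.} After dividing through by the gcd this amounts to a Frobenius-type statement, namely that $\{0, d_1, \ldots, \ell_1 d_1\} + \ldots + \{0, d_r, \ldots, \ell_r d_r\}$ contains an interval $[M, \sum_i \ell_i d_i - M]$ for some bound $M$ depending only on $d_1, \ldots, d_r$, once the $\ell_j$ are sufficiently large. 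For $\Delta''$ the analogous construction applied to powers of the $b_i$, together with Lemma~\ref{2.2}.1 and the same sumset lemma, identifies $\min \Delta_H([b])$ with $\gcd\{d_i : i \in I\}$.

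For the final equality $\Delta' = \Delta'' = \Delta_{\rho}^*(H) = \Delta_{\rho}(H)$ under the hypothesis $|\Delta(H_i)| = 1$ for $i \in [1,s]$: the case $\rho(H) = 1$ is trivial, so assume $\rho(H) > 1$. Lemma~\ref{2.4}.1 applied to each $H_i$ with $i \in [1,s]$ gives $\Delta_{\rho}(H_i) = \Delta_{\rho}^*(H_i) = \Delta(H_i) = \{d_i\}$, whence $\Delta' = \Delta''$ and both coincide with the set of gcds $\gcd\{d_i : i \in I\}$ over nonempty $I \subset [1,s]$. To prove the reverse inclusion $\Delta_{\rho}(H) \subset \Delta'$, I would fix $d \in \Delta_{\rho}(H)$ with witnesses $a_k$ whose sets of lengths are AAPs with difference $d$, length at least $k$, and elasticity $\rho(H)$. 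The weighted-average-equality argument from part~1 forces every non-trivial coordinate of each $a_k$ to lie in $[1,s]$, because a non-trivial $a_{k,i}$ with $i \in [s+1,n]$ would satisfy $\rho(\mathsf L(a_{k,i})) < \rho(H)$ and spoil the equality. Pigeonholing over the finitely many possible index sets $I_k \subset [1,s]$ of non-trivial coordinates yields a subsequence with a common $I \subset [1,s]$; along it, $\mathsf L(a_k)$ is the sumset of genuine arithmetic progressions with differences $(d_i)_{i \in I}$, which by the combinatorial lemma above is an AAP with difference exactly $\gcd_{i \in I} d_i$. Matching this with the prescribed AAP structure of $\mathsf L(a_k)$ yields $d = \gcd_{i \in I} d_i \in \Delta'$, completing the plan.
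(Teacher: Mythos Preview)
Your proposal is correct and follows essentially the same route as the paper: the sumset identity for $\mathsf L$, the weighted-average argument for part~1, coordinatewise witnesses combined via Lemma~\ref{2.2}.3 for the two inclusions in part~2, and the observation that non-trivial coordinates of a maximal-elasticity element must lie in $[1,s]$ for the reverse inclusion.

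One small point on the last step: after pigeonholing to a common index set $I$, you invoke your sumset lemma (``sumset of long AAPs is a long AAP with difference the gcd'') to conclude $d = \gcd_{i \in I} d_i$. But that lemma, as you stated it, needs each component AP to be long, whereas along your subsequence you only know $|\mathsf L(a_{k,i})| \ge 2$ for $i \in I$; some of these may remain bounded while others grow. The paper avoids both this issue and the pigeonholing by arguing directly for a single sufficiently large $k$: since $\Delta(H_i)=\{d_i\}$, the sumset lies in a coset of $\gcd_{i\in I}(d_i)\,\Z$, forcing $\gcd_{i\in I}(d_i)\mid d$; and since the set is an AAP with difference $d$ it lies in $y+d\Z$, so $d\mid d_i$ for every $i\in I$ (each $d_i$ visibly occurs as a distance in the sumset). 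This gives $d=\gcd_{i\in I} d_i \in \Delta'$ without any growth requirement on the individual components.
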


\begin{proof}
1. The formula for $\rho (H)$ follows from \cite[Proposition 1.4.5]{Ge-HK06a}, where a proof is given for cancellative monoids but the proof of the general case runs along the same lines. The formula for $\rho (H)$ immediately implies the second assertion.

\smallskip
2.(i) First we show that $\Delta' \subset \Delta_{\rho} (H)$. Let $\emptyset \ne I \subset [1,s]$, say $I = [1,r]$, and choose $d_i \in \Delta_{\rho} (H_i)$ for every $i \in [1,r]$.  For each $i \in [1,r]$ and every $\ell \in \N$ there is an $a_{i, \ell} \in H_i$ such that $\mathsf L (a_{i, \ell})$ is an AAP with difference $d_i$, length at least $2 \ell$, and with $\rho (\mathsf L (a_{i,\ell})) = \rho (H)$. Then
$\rho \big( \mathsf L (a_{1,\ell} \cdot \ldots \cdot a_{r,\ell}) \big) = \rho (H)$ by Lemma \ref{2.2}.3. Thus, for all sufficiently large $\ell$,  the sumset $\mathsf L (a_{1,\ell}) + \ldots +  \mathsf L (a_{r,\ell}) = \mathsf L (a_{1,\ell} \cdot \ldots \cdot a_{r,\ell})$ is an AAP with difference $\gcd (d_1, \ldots, d_r)$ and length at least $\ell$.

\smallskip
2.(ii) Second we show that $\Delta'' \subset \Delta^*_{\rho} (H)$. Let $\emptyset \ne I \subset [1,s]$, say $I = [1,r]$, and choose $d_i \in \Delta^*_{\rho} (H_i)$ for every $i \in [1,r]$. Thus there are $a_i \in H_i$ such that $\rho (\mathsf L (a_i))=\rho (H)$ and $\min \Delta_{H_i} ( [ a_i ]) = \min \Delta_H ( [ a_i ]) = d_i$ for all $i \in [1,r]$. Therefore, again for all $i \in [1,r]$, there is an $\ell_i \in \N$ such that $d_i \in \Delta ( \mathsf L (a_i^{\ell_i}))$ and thus, for every $k \in \N$, $\mathsf L (a_i^{2k \ell_i})$ contains an arithmetical progression with difference $d_i$ and length at least $2k$. Setting $\ell = \max (\ell_1, \ldots, \ell_r)$ we infer that
\[
\mathsf L ( (a_1 \cdot \ldots \cdot a_r)^{2k \ell}) = \mathsf L (a_1^{2k\ell}) + \ldots + \mathsf L (a_r^{2k\ell})
\]
is an AAP with difference $\gcd ( d_1, \ldots, d_r)$ and length at least $k$ for all sufficiently large $k$. Thus $\min \Delta_H ( [a_1 \cdot \ldots \cdot a_r]) = \gcd ( d_1, \ldots, d_r)$. Since
 $\rho (\mathsf L (a_1 \cdot \ldots \cdot a_r))=\rho (H)$ by Lemma \ref{2.2}.3,  it follows that $\gcd ( d_1, \ldots, d_r) = \min \Delta_H ( [a_1 \cdot \ldots \cdot a_r]) \in \Delta_{\rho}^* (H)$.

\smallskip
2.(iii)
Now suppose that $\Delta (H_i) = \{d_i\}$ for all $i \in [1,s]$. Then $\Delta_{\rho}^* (H_i)  = \Delta_{\rho} (H_i) = \Delta (H_i)$ by Lemma \ref{2.4}.1 and hence $\Delta' = \Delta''$. By 2.(i) and 2.(ii) it remains to show that $\Delta_{\rho} (H) \subset \Delta'$. Then all four sets are equal as asserted.

Let $d \in \Delta_{\rho} (H)$ and let $k \in \N$ be sufficiently large. Then there are $a_{1,k} \in H_1, \ldots, a_{s,k} \in H_s$ such that $\mathsf L (a_{1,k} \cdot \ldots \cdot a_{s,k})$ is an AAP with difference $d$, elasticity $\rho (H)$, and length at least $k$. Since $\Delta (H_i) = \{d_i\}$ for all $i \in [1,s]$,
\[
\mathsf L (a_{1,k} \cdot \ldots \cdot a_{s,k}) = \mathsf L (a_{1,k}) +   \ldots + \mathsf L ( a_{s,k})
\]
is a sumset of arithmetical progressions with differences $d_1, \ldots, d_s$.
After renumbering if necessary there is an $r \in [1,s]$ such that $|\mathsf L(a_{i,k})|>1$ for all  $i \in [1,r]$ and $|\mathsf L (a_{i,k})|=1$ for all $i \in [r+1,s]$. Thus we clearly obtain that $d \ge \gcd (d_1, \ldots, d_r)$.
Since $\mathsf L (a_{1,k} \cdot \ldots \cdot a_{s,k})$ is an AAP with difference $d$ and length at least $k$ with $k$ being sufficiently large, it follows that $\mathsf L (a_{1,k} \cdot \ldots \cdot a_{s,k}) \subset y + d \Z$ for some $y \in \Z$ (see \eqref{eq:defAAP}), which implies  that $d \mid d_i$ for all $i \in [1,r]$. Thus
$d = \gcd (d_1, \ldots, d_r)$ and hence $d \in \Delta'$.
\end{proof}

\medskip
\section{Transfer Krull monoids} \label{3}
\medskip

An atomic unit-cancellative monoid $H$ is said to be a {\it transfer Krull monoid} if one of the following two equivalent properties is satisfied:
\begin{itemize}
\item[(a)] There is a commutative Krull monoid $B$ and a weak transfer homomorphism $\theta \colon H \to B$.

\item[(b)] There is an abelian group $G$, a subset $G_0 \subset G$, and a weak transfer homomorphism $\theta \colon H \to \mathcal B (G_0)$.
\end{itemize}
In case (b) we say that $H$ is a transfer Krull monoid over $G_0$, and if $G_0$ is finite, then $H$ is said to be a transfer Krull monoid of finite type.
We do not repeat the technical definition of  weak transfer homomorphisms (introduced by Baeth and Smertnig in \cite{Ba-Sm15}) because we use only that they preserve sets of lengths. Therefore
$\mathcal L (H) = \mathcal L (G_0) $ (\cite[Lemma 4.2]{Ge16c}) which, by definition, implies that
\begin{equation} \label{eq:basic5}
\Delta (H) = \Delta (G_0), \ \Delta_{\rho} (H) = \Delta_{\rho} (G_0), \ \rho (H) = \rho  (G_0) \,,
\end{equation}
and $H$ has accepted elasticity if and only if $\mathcal B (G_0)$ has accepted elasticity. Note that, as  with other invariants, we use the abbreviations
\[
\Delta_1 (G_0) := \Delta_1 \bigl( \mathcal B (G_0) \bigr), \quad \Delta_{\rho}^* (G_0) := \Delta_{\rho}^* \bigl( \mathcal B (G_0) \bigr), \quad \text{and} \quad \Delta_{\rho} (G_0) := \Delta_{\rho} \bigl( \mathcal B (G_0) \bigr) \,.
\]
Every commutative Krull monoid (and thus every commutative Krull domain)  with class group $G$ is a transfer Krull monoid over the subset $G_0 \subset G$ containing prime divisors. In particular, if the class group $G$ is finite and every class contains a prime divisor (which holds true for holomorphy rings in global fields), then it is a transfer Krull monoid over $G$. Deep results, due to D. Smertnig, reveal large classes of bounded HNP (hereditary noetherian prime) rings to be transfer Krull (\cite{Sm13a, Ba-Sm15, Sm17a}). To mention one of these results in detail, let $\mathcal O$ be a ring of integers of an algebraic number field $K$, $A$ a central simple algebra over $K$, and $R$ a classical maximal $\mathcal O$-order of $A$. Then the monoid of cancellative elements of $R$ is transfer Krull if and only if every stably free left $R$-ideal is free, and if this holds, then it is a tranfer Krull monoid over a finite abelian group (namely a ray class group of $\mathcal O$).
We refer to \cite{Ge16c} for a detailed discussion of commutative Krull monoids with finite class group and of further transfer Krull monoids.

\smallskip
Let $H$ be a transfer Krull monoid over a finite abelian group $G$. The system $\mathcal L (H) = \mathcal L (G)$, together with all parameters controlling it, is a central object of interest in factorization theory (see  \cite{Sc16a} for a survey). By \eqref{eq:basic1} and Lemma \ref{2.4}.1, we have
\[
\Delta^*_{\rho} (G) \subset \Delta_{\rho} (G) \subset \Delta_1 (G) \subset \Delta (G)  \,.
\]
The set $\Delta (G)$ is an interval by \cite{Ge-Yu12b}, but $\Delta_1 (G)$ is far from being an interval (\cite{Pl-Sc18a}). A characterization when $\Delta_1 (G)$ is an interval can be found in \cite{Zh17a}. We have $\max \Delta_1 (G) = \max \{\mathsf r (G)-1, \exp (G)-2 \}$ (for $|G|\ge 3$, by \cite{Ge-Zh16a}). This section will reveal that $\Delta_{\rho} (G)$ is quite different from $\Delta_1 (G)$.

\smallskip
We start with a  result for transfer Krull monoids over arbitrary finite subsets. It shows that in finitely generated commutative Krull monoids $H$ with finite class group (and without restriction on the classes containing prime divisors)  a large variety of finite sets can be realized as $\Delta_{\rho} (H)$ sets (Lemma \ref{2.5} shows that not every finite set can be realized as a $\Delta_{\rho} (\cdot)$ set of some monoid;  see also Lemmas \ref{2.6} and \ref{4.3}). In contrast to this we will see that the set $\Delta_{\rho} (H)$ is extremely restricted if the set of classes containing prime divisors is very large.

\medskip
\begin{theorem} \label{3.1}~

\begin{enumerate}
\item Let $H$ be a transfer Krull monoid over a finite subset $G_0$. Then $H$ has accepted elasticity $\rho (H)=\rho (G_0) \le \mathsf D (G_0)/2$ and equality holds if $G_0=-G_0$.

\smallskip
\item For every finite set  $\Delta = \{d_1, \ldots, d_n\} \subset \N$  there exists a finitely generated commutative Krull monoid $H$ with finite class group such that $\{ \gcd  \{d_i \mid i \in I \} \mid \emptyset \ne I \subset [1,n] \} = \Delta^*_{\rho} (H) =   \Delta_{\rho} (H)$.

\smallskip
\item  If $H$ be a transfer Krull monoid over a subset $G_0$ of a finite abelian group $G$ with  $\rho (H) = \mathsf D (G)/2$,  then $\langle G_0 \rangle =G$ and $\Delta_{\rho} (H) \subset \Delta_{\rho} (G)$.
\end{enumerate}
\end{theorem}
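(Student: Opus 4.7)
\emph{Part 1.} Since $H$ is a transfer Krull monoid over $G_0$, \eqref{eq:basic5} reduces matters to $\mathcal B (G_0)$. Because $|G_0|<\infty$, the Davenport constant $\mathsf D (G_0)$ is finite, the atom set $\mathcal A (G_0)$ is finite, and $\mathcal B (G_0)$ is finitely generated, whence accepted elasticity follows from \cite[Theorem 3.1.4]{Ge-HK06a}. For the upper bound I use that every atom $U \in \mathcal A (G_0) \setminus \{0\}$ satisfies $2 \le |U| \le \mathsf D (G_0)$, so any non-unit $B \in \mathcal B (G_0)$ with $k, \ell \in \mathsf L (B)$ satisfies $2k \le |B| \le \mathsf D (G_0)\,\ell$, yielding $\rho(\mathsf L (B)) \le \mathsf D (G_0)/2$ (the case $0 \in G_0$ reduces to this by stripping copies of the trivial atom). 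For equality when $G_0 = -G_0$, I pick an atom $V$ of length $\mathsf D (G_0)$; then $-V$ is again an atom of $\mathcal B (G_0)$, and $B := V \cdot (-V)$ admits both the two-atom factorization $V\cdot(-V)$ and, by pairing each $g$ occurring in $V$ with its partner $-g \in G_0$, a factorization into $\mathsf D (G_0)$ length-two atoms $g\cdot(-g)$, giving $\rho (\mathsf L (B)) = \mathsf D (G_0)/2$.

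\emph{Part 2.} The plan is to realise the prescribed family of gcds as $\Delta_{\rho}$ of a direct product $H = H_1 \times \cdots \times H_n$ and to invoke Lemma \ref{2.6}(2). I look for finitely generated commutative Krull monoids $H_1,\dots,H_n$, each with finite class group, satisfying $\Delta (H_i) = \{d_i\}$ and with a common elasticity $\rho (H_i) = r$; once this is secured, Lemma \ref{2.4}(1) gives $\Delta_{\rho}^*(H_i) = \Delta_{\rho}(H_i) = \{d_i\}$, and Lemma \ref{2.6}(2) delivers $\Delta_{\rho}^*(H) = \Delta_{\rho}(H) = \Delta' = \Delta''$, which is precisely the target set of gcds. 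A natural building block is $H_i = \mathcal B (\{g_i,-g_i\})$ inside $C_{d_i+2}$ (with $g_i$ of order $d_i+2$): its three atoms $g_i(-g_i)$, $g_i^{d_i+2}$, $(-g_i)^{d_i+2}$ give $\Delta (H_i) = \{d_i\}$, $\rho (H_i) = (d_i+2)/2$, and class group $C_{d_i+2}$. The main obstacle I foresee is equalising the elasticities, since the naive values $(d_i+2)/2$ generally differ: I would enlarge each $G_{0,i}$, for example by placing all of them inside a common cyclic group $C_N$ with $(d_i+2) \mid N$ and adjoining further inverse pairs of generators, so that the longest atom in each $H_i$ reaches a prescribed common length without introducing new distances.

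\emph{Part 3.} Combining the hypothesis $\rho(H) = \mathsf D (G)/2$ with \eqref{eq:basic5} and Part 1 applied successively to $G_0$, $\langle G_0 \rangle$, and $G$ yields
\[
\frac{\mathsf D (G)}{2} = \rho (H) = \rho (G_0) \le \frac{\mathsf D (G_0)}{2} \le \frac{\mathsf D (\langle G_0 \rangle)}{2} \le \frac{\mathsf D (G)}{2} \,,
\]
hence $\mathsf D (\langle G_0 \rangle) = \mathsf D (G)$. The standard inequality $\mathsf D (G) \ge \mathsf D (\langle G_0 \rangle) + \mathsf D (G/\langle G_0 \rangle) - 1$ then forces $\mathsf D (G/\langle G_0 \rangle) \le 1$ and therefore $\langle G_0 \rangle = G$. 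For the inclusion $\Delta_\rho(H) \subset \Delta_\rho(G)$, I observe that $\mathcal B (G_0)$ is a divisor-closed submonoid of $\mathcal B (G)$ (divisors inherit support), while $\rho(\mathcal B (G_0)) = \mathsf D (G)/2 = \rho(\mathcal B (G))$ by hypothesis and by Part 1 applied to the symmetric set $G = -G$. Lemma \ref{2.4}(2) then delivers $\Delta_\rho(H) = \Delta_\rho(G_0) \subset \Delta_\rho(G)$, completing the outline.
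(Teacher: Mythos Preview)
Parts 1 and 3 of your outline are correct and track the paper's argument closely (the paper cites \cite[Theorem 3.4.11]{Ge-HK06a} for the elasticity bound where you give the direct length-counting, and it invokes ``proper subgroups have strictly smaller Davenport constant'' \cite[Proposition 5.1.11]{Ge-HK06a} where you use the subadditivity inequality $\mathsf D(G) \ge \mathsf D(\langle G_0\rangle) + \mathsf D(G/\langle G_0\rangle) - 1$; both routes are standard and equivalent here).

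Part 2 has a real gap. Your plan hinges on Lemma \ref{2.6}(2), which requires all the factors $H_i$ to share the \emph{same} accepted elasticity; your building blocks $\mathcal B(\{g_i,-g_i\}) \subset \mathcal B(C_{d_i+2})$ have $\rho(H_i)=(d_i+2)/2$, and these differ as soon as two $d_i$ differ. You acknowledge this and propose to ``enlarge each $G_{0,i}$ \dots\ adjoining further inverse pairs of generators, so that the longest atom in each $H_i$ reaches a prescribed common length without introducing new distances.'' This is precisely the hard step, and it is not clear it can be done along the lines you suggest: in a cyclic group, adjoining a second inverse pair $\{kg,-kg\}$ to $\{g,-g\}$ almost always drops $\min\Delta$ below $d_i$ (compare Lemma \ref{3.4}.3 and the computations in Theorem \ref{3.5}), so you would destroy the very property $\Delta(H_i)=\{d_i\}$ you need.

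The paper avoids this obstacle by choosing the building blocks so that the common elasticity is $2$ from the outset, independently of $d$. For $d\ge 2$ one takes independent elements $e_1,\dots,e_{d-1}$ of order $2d$, sets $e_0=-(e_1+\dots+e_{d-1})$, and $G'=\{e_0,e_1,\dots,e_{d-1}\}$; then $\mathcal A(G')$ consists of the atoms $e_i^{2d}$ of length $2d$ and the single atom $e_0e_1\cdots e_{d-1}$ of length $d$, giving $\rho(G')=2$ and $\Delta(G')=\{d\}$ (see \cite[Proposition 4.1.2]{Ge-HK06a}). For $d=1$ one uses $G'=\{g,3g\}\subset C_8$. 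With these blocks your Lemma \ref{2.6} strategy goes through verbatim.
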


\begin{proof}
1. By \eqref{eq:basic5}, we have $\mathcal L (H) = \mathcal L (G_0)$ and hence $\rho (H)=\rho (G_0)$.
Since the set $G_0$ is finite, the monoid $\mathcal B (G_0)$ is finitely generated whence the elasticity $\rho (G_0)$ is accepted (\cite[Theorems 3.1.4 and 3.4.2]{Ge-HK06a}). The statements on $\rho (G_0)$ follow from \cite[Theorem 3.4.11]{Ge-HK06a}.

\smallskip
2. Let $\Delta = \{d_1, \ldots, d_n\} \subset \N$ be a finite set. We start with the following assertion.

\smallskip
\begin{enumerate}
\item[{\bf A.}\,] For every $i \in [1,n]$, there is a finite abelian group $G_i$ and a subset $G_i' \subset G_i$ such that $\Delta_{\rho} (G_i') = \Delta (G_i')=\{d_i\}$ and $\rho (G_i') = 2$.

\end{enumerate}

\noindent
{\it Proof of} \,{\bf A}.\, We do the construction for a given $d \in \N$ and omit all indices. If $d = 1$, then $G = C_8 = \{0,g, \ldots ,7g\}$ and $G' = \{g, 3g\}$ have the required properties. Suppose that $d \ge 2$. Consider a finite abelian group $G$,  independent elements $e_1, \ldots, e_{d-1} \in G$ with $\ord (e_1) = \ldots = \ord (e_{d-1}) = 2d$, and set $e_0 = -(e_1+ \ldots + e_{d-1})$. It is easy to check that $G' = \{e_0, e_1, \ldots, e_{d-1}\}$ satisfies $\rho (G')=2$ and $\Delta (G')=\{d\}$ (for details of a more general construction see \cite[Proposition 4.1.2]{Ge-HK06a}).
\qed[Proof of {\bf A}]

We set
\[
G_0 = \biguplus_{i=1}^n G_i' \ \subset \ G = G_1 \oplus \ldots \oplus G_n \quad \text{and} \quad H = \mathcal B (G_0) \,.
\]
Then $H = \mathcal B (G_1') \times \ldots \times \mathcal B (G_n')$ is a finitely generated commutative Krull monoid with finite class group. By Lemma \ref{2.6}.1, $H$ has accepted elasticity $\rho (H)=2$ and
\[
\big\{ \gcd  \{d_i \mid I \subset [1,n] \}  \mid \emptyset \ne I \subset [1,n] \big\} =  \Delta^*_{\rho} (H) = \Delta_{\rho} (H) \,.
\]

\smallskip
3. Let $H$ be a transfer Krull monoid over $G_0$ such that $\rho (H)=\mathsf D (G)/2$. Then 1. shows that
\[
\mathsf D(G)/2 = \rho (H) \le \mathsf D (G_0)/2 \le \mathsf D (G)/2 \,.
\]
Thus
\[
\mathsf D (G) = \mathsf D (G_0) \le \mathsf D ( \langle G_0 \rangle ) \le \mathsf D (G) \,,
\]
and since proper subgroups of $G$ have a strictly smaller Davenport constant (\cite[Proposition 5.1.11]{Ge-HK06a}), it follows that $\langle G_0 \rangle = G$.

Since $\rho (H)=\rho (G_0)$ and $\rho (G)=\mathsf D (G)/2$ by 1., we obtain that
 $\rho (G_0) = \rho (G)$. Since $\Delta_{\rho} (H) = \Delta_{\rho} (G_0)$ and $\mathcal B (G_0) \subset \mathcal B (G)$ is a divisor-closed submonoid, the assertion follows from Lemma \ref{2.4}.2.
\end{proof}

\smallskip
Let all notation  be as in Theorem \ref{3.1}.3. Since $\Delta_{\rho} (H) \ne \emptyset$ and $\Delta_{\rho} (G)$ will turn out to be small (Conjecture \ref{3.20}), we have $\Delta_{\rho} (H) = \Delta_{\rho} (G)$ in many situations (as it holds true in the case $G_0=G$).

In the remainder of this section we study $\Delta_{\rho} (G)$ for  finite abelian groups $G$. Suppose that
\begin{equation} \label{eq:basic6}
G \cong C_{n_1} \oplus \ldots \oplus C_{n_r} \quad \text{and set} \quad \mathsf D^* (G) = 1 + \sum_{i=1}^r (n_i-1) \,,
\end{equation}
where $1 < n_1 \t \ldots \t n_r$,  $n_r=\exp (G)$ is the exponent of $G$, and $r=\mathsf r (G)$ is the rank of $G$. Thus $\mathsf r (G) = \max \{\mathsf r_p (G) \mid p \in \P\}$ is the maximum of all $p$-ranks $\mathsf r_p (G)$ over all primes $p \in \P$.

The next lemma, Lemma \ref{3.2}, reveals that the study of $\Delta_{\rho} (G)$ needs information on the Davenport constant $\mathsf D (G)$ as well as (at least some basic) information on the structure of minimal zero-sum sequences having length $\mathsf D (G)$. Although studied since the 1960s, the precise value of the Davenport constant is known only in a very limited number of cases. Clearly, we have
$\mathsf D^* (G) \le \mathsf D (G)$ and  since the 1960s it is known that equality holds if $\mathsf r (G) \le 2$ or if $G$ is a $p$-group. Further classes of groups have been found where equality holds and also where it does not hold, but a good understanding of this phenomenon is still missing. Even less is known on the inverse problem, namely on the structure of minimal zero-sum sequences having length $\mathsf D (G)$. The structure of such sequences is clear for cyclic groups and for elementary $2$-groups, and recently the structure was determined for rank two groups. For general groups, even  harmless looking questions (such as whether each minimal zero-sum sequence of length $\mathsf D (G)$ does contain an element of order $\exp (G)$) are open. In this section we study $\Delta_{\rho} (G)$ for all classes of groups where at least some information on the inverse problem is available.

Recall that $\Delta (G)=\emptyset$ if and only if $|G| \le 2$ whence we will always assume that $|G| \ge 3$.

\medskip
\begin{lemma} \label{3.2}
Let $G$ be a finite abelian group with $|G|\ge 3$.
\begin{enumerate}
\item  For $A \in \mathcal B (G)$  the following statements are equivalent{\rm \,:}
       \begin{enumerate}
       \item[(a)] $\rho \big( \mathsf L (A) \big) = \mathsf D (G)/2$.

       \item[(b)] There are $k, \ell \in \N$ and $U_1, \ldots, U_k, V_1, \ldots, V_{\ell} \in \mathcal A (G)$ with $|U_1|=\ldots=|U_k|=\mathsf D (G)$, $|V_1|=\ldots=|V_{\ell}|=2$ such that $A = U_1 \cdot \ldots \cdot U_k=V_1 \cdot \ldots \cdot V_{\ell}$.
       \end{enumerate}

\smallskip
\item  For a subset $G_0 \subset G$      the following statements are equivalent{\rm \,:}
       \begin{enumerate}
       \item[(a)] $G_0 = \supp (A)$ for some $A \in \mathcal B (G)$ with $\rho (\mathsf L (A))=\mathsf D (G)/2$.

       \item[(b)] $G_0=-G_0$ and for every $g \in G_0$ there is some $A \in \mathcal A (G_0)$ with $g \t A$ and $|A|=\mathsf D (G)$.
       \end{enumerate}

\smallskip
\item $\Delta_{\rho}^* (G) = \{\min \Delta (G_0) \mid G_0 = \supp (A) \ \text{\rm for some} \ A \in \mathcal B (G) \ \textrm{\rm with} \ \rho (\mathsf L (A))=\mathsf D (G)/2 \}$.
\end{enumerate}
\end{lemma}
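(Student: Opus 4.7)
The plan is to prove the three items in order, using item (1) to drive item (2) and then combining item (2) with the identification of divisor-closed submonoids of $\mathcal B(G)$ to get item (3).

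For item (1), the direction (b)$\Rightarrow$(a) is a length count: the two given factorizations yield $\{k,\ell\}\subset \mathsf L(A)$, and comparing total lengths gives $k\mathsf D(G)=|A|=2\ell$, so $\rho(\mathsf L(A))\ge \ell/k=\mathsf D(G)/2$; the reverse inequality is the universal bound $\rho(G)\le \mathsf D(G)/2$ from Theorem \ref{3.1}.1. For (a)$\Rightarrow$(b), set $k=\min \mathsf L(A)$ and $\ell=\max \mathsf L(A)$, so $2\ell=k\mathsf D(G)$. Any factorization $A=W_1\cdots W_k$ of minimal length satisfies $\sum_i|W_i|=|A|=k\mathsf D(G)$ with $|W_i|\le \mathsf D(G)$, forcing $|W_i|=\mathsf D(G)$ for every $i$; any factorization of maximal length $\ell$ satisfies $\sum_j|V_j|=|A|=2\ell$ with $|V_j|\ge 2$, forcing $|V_j|=2$ for every $j$.

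For item (2), (a)$\Rightarrow$(b) is immediate from item (1): the length-$2$ factorization forces $-g\in G_0$ whenever $g\in G_0$, and the length-$\mathsf D(G)$ factorization yields, for every $g\in\supp(A)=G_0$, an atom $U\in\mathcal A(G_0)$ (note $\supp(U)\subset G_0$) with $g\mid U$ and $|U|=\mathsf D(G)$. The interesting direction (b)$\Rightarrow$(a) proceeds by symmetrization: for each $g\in G_0$ pick an atom $U_g\in\mathcal A(G_0)$ with $g\mid U_g$ and $|U_g|=\mathsf D(G)$, form $A_0=\prod_{g\in G_0}U_g$, and let $\overline{A_0}$ be the reflection defined by $\mathsf v_h(\overline{A_0})=\mathsf v_{-h}(A_0)$. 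Using $G_0=-G_0$ one checks that $\overline{A_0}\in\mathcal B(G_0)$, that each $\overline{U_g}$ is again an atom in $\mathcal A(G_0)$ of length $\mathsf D(G)$, and that $A=A_0\overline{A_0}$ satisfies $\mathsf v_h(A)=\mathsf v_{-h}(A)$ for every $h$. Hence $A$ admits both a factorization into $2|G_0|$ atoms of length $\mathsf D(G)$ (the $U_g$ and the $\overline{U_g}$) and a factorization into atoms of length $2$ (by pairing each $g$ in $A$ with a matching $-g$; the multiplicity of any element of order $2$ is even by construction). Moreover $\supp(A)=G_0$ because $g\mid U_g\mid A$ for every $g\in G_0$. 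Item (1) then yields $\rho(\mathsf L(A))=\mathsf D(G)/2$.

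Item (3) follows by combining item (2) with the standard identification $\LK A\RK=\mathcal B(\supp(A))$ valid for every $A\in\mathcal B(G)$: indeed $B\in\LK A\RK$ iff $B\mid A^n$ for some $n$, and for $n$ sufficiently large this is equivalent to $\supp(B)\subset\supp(A)$. Lemma \ref{2.2}.2 then gives $\Delta_\rho^*(G)=\{\min\Delta(\mathcal B(\supp(A)))\mid A\in\mathcal B(G),\ \rho(\mathsf L(A))=\mathsf D(G)/2\}$, which, with $G_0:=\supp(A)$ and the abbreviation $\Delta(G_0)=\Delta(\mathcal B(G_0))$, is the asserted right-hand side. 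The main obstacle is the symmetrization step in (b)$\Rightarrow$(a) of item (2): one must construct $A$ so that \emph{both} factorization types exist simultaneously and $\supp(A)$ equals the prescribed $G_0$, which is why one takes the product over all $g\in G_0$ rather than a single atom and then multiplies by the reflection to enforce $\mathsf v_h(A)=\mathsf v_{-h}(A)$.
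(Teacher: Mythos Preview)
Your proof follows the paper's approach, but there is one small gap in item~1, (a)$\Rightarrow$(b). You assert that every atom $V_j$ in a factorization of maximal length satisfies $|V_j|\ge 2$; this is false in general, since $0\in\mathcal A(G)$ has length~$1$. Equivalently, your chain $2\ell\le|A|\le k\mathsf D(G)=2\ell$ uses the lower bound $|A|\ge 2\ell$, which is exactly where $|V_j|\ge 2$ is needed. The paper rules this out first: if $A=0^{m}C$ with $m\ge 1$ and $0\nmid C$, then
\[
\rho\bigl(\mathsf L(A)\bigr)=\frac{m+\max\mathsf L(C)}{m+\min\mathsf L(C)}\le\frac{\max\mathsf L(C)}{\min\mathsf L(C)}\le\frac{\mathsf D(G)}{2},
\]
with strict inequality in the first step unless $|\mathsf L(C)|=1$, in which case $\rho(\mathsf L(A))=1<\mathsf D(G)/2$. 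Hence (a) forces $0\nmid A$, and then every atom dividing $A$ has length at least~$2$; after inserting this, your argument goes through verbatim.

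Items~2 and~3 are correct and match the paper. Your symmetrization $A=A_0\overline{A_0}$ with $A_0=\prod_{g\in G_0}U_g$ is the same idea as the paper's $A=\prod_{i=1}^{k}(-A_i)A_i$ (where $G_0=\{g_1,-g_1,\ldots,g_k,-g_k\}$), just indexed over all of $G_0$ rather than over representatives of the pairs $\{g,-g\}$; your explicit verification that elements of order~$2$ occur with even multiplicity is a point the paper leaves implicit. For item~3, note that item~2 is not actually needed: once you have $\LK A\RK=\mathcal B(\supp(A))$ and $\rho(G)=\mathsf D(G)/2$, the claimed equality is a direct rewriting of equation~\eqref{eq:Delta-rho^*} from Lemma~\ref{2.2}.2.
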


\noindent{\it Comment on 1.} If $U_1, \ldots, U_m \in \mathcal A (G)$ with $|U_1|= \ldots = |U_m|=\mathsf D (G)$, then obviously we obtain an equation of the form $U_1(-U_1) \cdot \ldots \cdot U_m(-U_m) = V_1 \cdot \ldots \cdot V_{m \mathsf D (G)}$ with $|V_i|=2$ for all $i \in [1, m \mathsf D (G)]$. But there are also equations $U_1 \cdot \ldots \cdot U_k=V_1 \cdot \ldots \cdot V_{\ell}$ with all properties as in 1.(b) and with $k$  odd (\cite{Fa-Zh16a}).

\begin{proof}
1. (a) $\Rightarrow$ (b) We set $L = \mathsf L (A)$ and suppose that $\rho (L)=\mathsf D (G)/2$. If $A = 0^mC$, with $m \in \N_0$ and $C \in \mathcal B (G \setminus \{0\})$, then
\[
\frac{\mathsf D (G)}{2} = \frac{\max L}{\min L} = \frac{m+\max \mathsf L (C)}{m+\min \mathsf L (C)} \le \frac{\max \mathsf L (C)}{\min \mathsf L (C)} \le \frac{\mathsf D (G)}{2} \,,
\]
whence $m=0$. Suppose that
\[
U_1 \cdot \ldots \cdot U_k = A = V_1 \cdot \ldots \cdot V_{\ell} \,,
\]
with $k = \min \mathsf L (A)$, $\ell = \max \mathsf L (A)$, and $U_1, \ldots, U_k, V_1, \ldots, V_{\ell} \in \mathcal A (G)$. Then $\rho (L)=\ell/k=\mathsf D (G)/2$ and
\[
2 \ell \le \sum_{i=1}^{\ell} |V_i|=|A|= \sum_{i=1}^k |U_i| \le k \mathsf D (G) \,.
\]
This implies that $|A|=2 \ell = k \mathsf D (G)$, $|V_1|=\ldots=|V_{\ell}|=2$, and $|U_1|= \ldots=|U_k| = \mathsf D (G)$.

(b) $\Rightarrow$ (a) Suppose that $A = U_1 \cdot \ldots \cdot U_k=V_1 \cdot \ldots \cdot V_{\ell}$ where $U_1, \ldots, U_k, V_1, \ldots, V_{\ell}$ are as in (b). Then we infer that
\[
\min \mathsf L (A) \mathsf D (G) \le k \mathsf D (G) = |A| = 2 \ell \le 2 \max \mathsf L (A)
\]
and hence
\[
\frac{\mathsf D (G)}{2} \le \frac{\max \mathsf L (A)}{\min \mathsf L (A)} = \rho \big( \mathsf L (A) \big) \le \frac{\mathsf D (G)}{2} \,.
\]

\smallskip
2.(a) $\Rightarrow$ (b) This follows from 1.

(b) $\Rightarrow$ (a) We set $G_0 = \{g_1, -g_1, \ldots, g_k, -g_k\}$. For every $i \in [1,k]$, let $A_i \in \mathcal A (G_0)$ with $g_i \t A_i$ and $|A_i|=\mathsf D (G)$, and set $A = \prod_{i=1}^k (-A_i)A_i$. Then $\supp (A)=G_0$ and $\rho \bigl( \mathsf L (A) \bigr) = \mathsf D (G)/2$.

\smallskip
3. Since for every $A \in \mathcal B (G)$ we have $\LK A \RK = \mathcal B ( \supp (A) )$, the assertion follows from 2.
\end{proof}

\medskip
\begin{corollary} \label{3.3}
Let $G$ be a finite abelian group with $|G|\ge 3$.
\begin{enumerate}
\item $\Delta_{\rho}^* (G) \subset \Delta_{\rho} (G) \subset \{ d \in \N \mid d \ \text{divides some} \ d' \in \Delta_{\rho}^* (G) \}$.

\smallskip
\item $\max \Delta_{\rho} (G) = \max \Delta_{\rho}^* (G) = \max \{ \min \Delta (G_0) \mid G_0 = \supp \big(  (-U)U \big), U \in \mathcal A (G_0) \ \text{with} \ |U|= \mathsf D (G) \}$.
\end{enumerate}
\end{corollary}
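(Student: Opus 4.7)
The plan is to deduce both parts from Lemma \ref{2.4} combined with Lemma \ref{3.2}, exploiting that $\mathcal B(G)$ is a reduced commutative Krull monoid with finitely many atoms up to associates. For part 1, note that $|G|\ge 3$ forces $\mathsf D(G)\ge 3$, hence $\rho(G)=\mathsf D(G)/2>1$ by Theorem \ref{3.1}.1, so Lemma \ref{2.4}.1 yields $\emptyset\ne\Delta_{\rho}^*(G)\subset\Delta_{\rho}(G)$, and Lemma \ref{2.4}.3 (applicable because $\mathcal B(G)$ is commutative, cancellative and has only finitely many atoms) delivers the second inclusion $\Delta_{\rho}(G)\subset\{d\in\N : d\mid d' \text{ for some } d'\in\Delta_{\rho}^*(G)\}$. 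The first equality in part 2 is then immediate: every $d\in\Delta_{\rho}(G)$ is bounded above by some $d'\in\Delta_{\rho}^*(G)$, while $\Delta_{\rho}^*(G)\subset\Delta_{\rho}(G)$, so $\max\Delta_{\rho}(G)=\max\Delta_{\rho}^*(G)$.

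For the second equality in part 2, denote the right-hand set by $\mathcal R$. To see $\mathcal R\subset\Delta_{\rho}^*(G)$, pick $U\in\mathcal A(G_0)$ with $|U|=\mathsf D(G)$ and set $A=(-U)U$. Then $A$ admits the length-two factorization $(-U)\cdot U$ into atoms of length $\mathsf D(G)$ and, since $0\notin\supp(U)$ (as $|U|\ge 2$), a length-$\mathsf D(G)$ factorization into length-two atoms obtained by pairing each occurrence of $g\in\supp(A)$ with an occurrence of $-g$. Lemma \ref{3.2}.1 therefore yields $\rho(\mathsf L(A))=\mathsf D(G)/2$, and Lemma \ref{3.2}.3 places $\min\Delta(G_0)=\min\Delta(\supp A)$ into $\Delta_{\rho}^*(G)$, so $\max\mathcal R\le\max\Delta_{\rho}^*(G)$.

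For the reverse inequality $\max\Delta_{\rho}^*(G)\le\max\mathcal R$, let $G_0$ be any set contributing to $\Delta_{\rho}^*(G)$ via Lemma \ref{3.2}.3. Using the equivalence (a)$\Leftrightarrow$(b) in Lemma \ref{3.2}.2, fix $g\in G_0$ and choose $U\in\mathcal A(G_0)$ with $g\mid U$ and $|U|=\mathsf D(G)$. Set $G_0':=\supp\big((-U)U\big)\subset G_0$; then $\mathcal B(G_0')$ is a divisor-closed submonoid of $\mathcal B(G_0)$, so $\Delta(G_0')\subset\Delta(G_0)$ and hence $\min\Delta(G_0)\le\min\Delta(G_0')\in\mathcal R$. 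This completes the argument. The only step beyond direct bookkeeping with the preceding lemmas is the verification that $(-U)U$ attains maximal elasticity, which is handled cleanly by the characterisation in Lemma \ref{3.2}.1.
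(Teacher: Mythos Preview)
Your proof is correct and follows essentially the same approach as the paper's: part 1 via Lemma \ref{2.4} (using that $\mathcal B(G)$ has finitely many atoms), and part 2 by combining Lemma \ref{3.2}.3 with the observation that any $G_0$ realising $\max\Delta_{\rho}^*(G)$ can be shrunk to $\supp\big((-U)U\big)$ for some atom $U$ of length $\mathsf D(G)$ without decreasing $\min\Delta(\cdot)$. If anything, you are slightly more explicit than the paper in verifying the inclusion $\mathcal R\subset\Delta_{\rho}^*(G)$, which the paper leaves implicit.
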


\begin{proof}
1. Since $\mathcal B (G)$ is finitely generated, this follows from Lemma \ref{2.4}.

\smallskip
2. The first equality follows from 1. Then  Lemma \ref{3.2}.3 implies that
\[
\max \Delta_{\rho}^* (G)= \max\{\min \Delta (G_0) \mid G_0 = \supp (A) \ \text{\rm for some} \ A \in \mathcal B (G) \ \textrm{\rm with} \ \rho (\mathsf L (A))=\mathsf D (G)/2 \} \,.
\]
Let $A \in \mathcal B (G)$ with $G_0 = \supp (A)$ and $\rho (\mathsf L (A))=\mathsf D (G)/2$.  Then, by Lemma \ref{3.2}, $G_0=-G_0$ and $A = U_1 \cdot \ldots \cdot U_k$ with $U_1, \ldots, U_k \in \mathcal A (G)$ and $|U_1|=\ldots = |U_k|=\mathsf D (G)$. Then $G_1 = \supp \big( (-U_1)U_1 \big) \subset G_0$ and $\min \Delta (G_0) \le \min \Delta (G_1)$. Thus the assertion follows.
\end{proof}

\smallskip
Let $G$ be a finite abelian group and let $g \in G$ with \ $\ord(g) = n \ge 2$. For every
sequence $S = (n_1g) \cdot \ldots \cdot (n_{\ell}g) \in \mathcal F
(\langle g \rangle)$, \ where $\ell \in \N_0$ and $n_1, \ldots, n_{\ell}
\in [1, n]$, \ we define its $g$-norm
\[
\| S \|_g = \frac{n_1+ \ldots + n_{\ell}}n  \,.
\]
Note that, $\sigma (S) = 0$ implies that   $n_1 + \ldots + n_{\ell}
\equiv 0 \mod n$ whence $\| S \|_g \in \N_0$.

\medskip
\begin{lemma} \label{3.4}
Let $G$ be a finite abelian group with $|G|\ge 3$ and $G_0 \subset G$ be a subset.
\begin{enumerate}
\item  If $-G_0=G_0$, then $\min \Delta (G_0)$ divides $\gcd \, \{ |U|-2 \mid U \in \mathcal A (G_0) \}$.

\smallskip
\item If $r\ge 2$,  $(e_1,\ldots, e_r)$  independent,  $\ord(e_i)=n_i$ for all $i\in [1,r]$ where $n_1 \t \ldots \t n_r$, $n_r > 2$, $e_0=e_1+\ldots+e_r$, and $G_0=\{e_1,-e_1,\ldots, e_r,-e_r, e_0,-e_0\}$, then $\min \Delta(G_0)=1$.

\smallskip
\item If \ $\langle G_0 \rangle = \langle g \rangle$ for some $g \in G_0$ and $\Delta (G_0) \ne \emptyset$, then \ $\min \Delta (G_0) = \gcd  \bigl\{ \|V\|_g -1 \,\bigm|\, V \in \mathcal A (G_0) \bigr\}$.
\end{enumerate}
\end{lemma}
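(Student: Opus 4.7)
I approach the three parts in order.

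For (1), I exploit the $-G_0 = G_0$ symmetry. Given any atom $U \in \mathcal A(G_0)$, also $-U \in \mathcal A(G_0)$, and the element $A = U \cdot (-U)$ admits two explicit factorizations: the length-$2$ factorization $U \cdot (-U)$, and the length-$|U|$ factorization into pair atoms $g \cdot (-g)$, one for each $g$ appearing in the sequence $U$. Hence $\{2, |U|\} \subset \mathsf L(A)$, so $|U|-2$ is a sum of elements of $\Delta(G_0)$. By Lemma \ref{2.2}.1, $\min \Delta(G_0) = \gcd \Delta(G_0)$ divides $|U|-2$ for every atom $U$, which proves (1).

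For (2), I plan to produce an element witnessing $1 \in \Delta(G_0)$. Set $V_1 = (-e_0)e_1 \cdots e_r$ and $V_2 = e_0 (-e_1) \cdots (-e_r)$, two atoms of length $r+1$. Then $V_1V_2$ admits the factorizations $V_1 \cdot V_2$ and $\bigl(e_0(-e_0)\bigr)\prod_{i=1}^r \bigl(e_i(-e_i)\bigr)$, contributing $r-1$ to $\Delta(G_0)$, and $e_j^{n_j}(-e_j)^{n_j}$ contributes $n_j - 2$. In subcases where $\gcd\{r-1,\, n_j-2 : j \in [1,r]\} = 1$ the claim follows immediately from (1) and Lemma \ref{2.2}.1. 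Otherwise I will use the hypothesis $n_r > 2$ to construct a further element with two factorizations of consecutive lengths, by combining the $V_1, V_2$ building blocks with atoms of length $n_r$ (such as $e_0^{n_r}$) and with longer atoms like $e_0 e_1^{n_1-1}\cdots e_r^{n_r-1}$, verifying that in the resulting element one can transform between factorizations by swapping a single length-$2$ pair with an appropriate larger atom, producing a distance of $1$.

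For (3), I use the additivity of the $g$-norm. Since $\langle G_0 \rangle = \langle g \rangle$, every $V \in \mathcal A(G_0)$ lies in $\mathcal B(\langle g \rangle)$ and satisfies $\|V\|_g \in \mathbb N_{\ge 1}$; for any factorization $B = V_1 \cdots V_k$, additivity gives $\|B\|_g = \sum_i \|V_i\|_g$, so
\[
k \;=\; \|B\|_g - \sum_{i=1}^{k}\bigl(\|V_i\|_g - 1\bigr).
\]
Consequently the difference of two factorization lengths of a common element of $\mathcal B(G_0)$ lies in the subgroup generated by $\{\|V\|_g - 1 : V \in \mathcal A(G_0)\}$, i.e., in $\gcd\{\|V\|_g - 1\} \cdot \mathbb Z$. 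This yields $\gcd\{\|V\|_g-1\} \mid \min \Delta(G_0)$. For the reverse divisibility I construct a product of atoms whose set of lengths contains two values differing by exactly the $\gcd$, realized as a B\'ezout combination of the $\|V\|_g - 1$ values over a suitable collection of atoms; together with Lemma \ref{2.2}.1, which guarantees $\min \Delta(G_0)$ divides every distance, this gives the claimed equality.

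The most delicate step is (2): reducing the gcd of the easily-produced distances $r-1$ and the $n_j - 2$'s to $1$ via auxiliary factorizations is a case analysis that critically uses $n_r > 2$ to ensure enough arithmetic flexibility in the atoms supported on $G_0$.
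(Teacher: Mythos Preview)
Your arguments for (1) and (3) are fine. Part (1) is exactly the paper's proof. For (3) the paper simply cites \cite[Lemma 6.8.5]{Ge-HK06a}; your additivity argument is the standard one behind that reference. (A small remark: for the reverse divisibility you do not actually need a B\'ezout construction realizing the gcd as a single length difference. It suffices to show, for each atom $V=h_1\cdots h_\ell$ with $h_i=a_ig$, that $\|V\|_g-1$ itself occurs as a length difference; this follows by comparing, for large $N$, the factorization $V\cdot(g^n)^N$ of length $N+1$ with the factorization $\prod_{i}(h_ig^{n-a_i})\cdot(g^n)^{N-\ell+\|V\|_g}$ of length $N+\|V\|_g$. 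Then $\min\Delta(G_0)=\gcd\Delta(G_0)$ divides every $\|V\|_g-1$.)

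The genuine gap is in (2). You observe that $\min\Delta(G_0)$ divides $r-1$ and each $n_j-2$, but you never execute the promised ``further construction'' in the case $\gcd\{r-1,n_1-2,\ldots,n_r-2\}>1$. That case is not empty: for $r=3$ and $n_1=n_2=n_3=4$ every distance you have exhibited equals $2$, and the building blocks you list ($V_1$, $V_2$, $e_0^{n_r}$, $e_0e_1^{n_1-1}\cdots e_r^{n_r-1}$) all have even $|U|-2$, so part (1) alone cannot finish. The phrase ``swapping a single length-$2$ pair with an appropriate larger atom'' is a hope, not a proof. The paper supplies the missing atoms via an explicit case split. If $n_1>2$, set $W=e_0^{n_r-1}e_1\cdots e_{r-1}(-e_r)^{n_r-1}\in\mathcal A(G_0)$; then
\[
W^2 \;=\; e_0^{n_r}\cdot(-e_r)^{n_r}\cdot\bigl(e_0^{n_r-2}e_1^2\cdots e_{r-1}^2(-e_r)^{n_r-2}\bigr)
\]
is a product of three atoms, so $\{2,3\}\subset\mathsf L(W^2)$. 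If $n_1=2$, say $n_1=\cdots=n_t=2<n_{t+1}$, then $S_1=e_0e_1\cdots e_t(-e_{t+1})\cdots(-e_r)$ and $S_2=e_0^{n_r-1}e_1\cdots e_r$ are atoms with $S_1^2$ and $S_2^2$ factoring into $t+1$ and $t+2$ atoms respectively, whence $\min\Delta(G_0)\mid\gcd(t-1,t)=1$. Producing such atoms is precisely the content of (2), and your proposal omits it.
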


\begin{proof}
1. If $U=g_1 \cdot \ldots \cdot g_{\ell} \in \mathcal A (G_0)$, then $(-U)U = \prod_{i=1}^{\ell} \big( (-g_i)g_i \big)$ whence $\{2, \ell\} \subset \mathsf L \big( (-U)U \big)$ and so $\gcd \Delta (G_0)$ divides $\ell - 2$.

\smallskip
2.  Since $e_0=e_1+\ldots+e_r$, we have $\ord(e_0)=n_r>2$. We distinguish two cases. First, suppose that $n_1>2$. Then
\[
W=e_0^{n_r-1}e_1 \cdot \ldots \cdot  e_{r-1}(-e_r)^{n_r-1}\in \mathcal A(G_0)
\]
and  $W^2=e_0^{n_r}\cdot (-e_r)^{n_r}\cdot \big( e_0^{n_r-2}e_1^2 \cdot \ldots \cdot e_{r-1}^2(-e_r)^{n_r-2} \big)$ is a product of three atoms whence $\min \Delta(G_0)=1$.

Now, we suppose that $n_1=2$, and let $t \in [1,r-1]$ such that $n_1=\ldots=n_t=2$ and $n_{t+1}>2$. Then
\[
 S_1=e_0e_1 \cdot \ldots \cdot e_{t}(-e_{t+1}) \cdot \ldots \cdot (-e_r)\in \mathcal A(G_0) \quad \text{and} \quad
 S_2=e_0^{n_r-1}e_1 \cdot \ldots \cdot e_r\in \mathcal A(G_0)\,.
\]
Then
\[
S_1^2= \big( e_0^2(-e_{t+1})^2 \cdot \ldots \cdot (-e_r)^2\big)  e_1^2 \cdot \ldots \cdot e_t^2
\]
is a product of $t+1$ atoms and
\[
S_2^2=e_0^{n_r} \cdot \big(e_0^{n_r-2}e_{t+1}^2 \cdot \ldots \cdot e_{r}^2 \big) \cdot e_1^2\cdot \ldots \cdot e_t^2
\]
is a product of $t+2$ atoms. Thus $\min \Delta(G_0)\t \gcd(t+1-2, t+2-2)=1$ which implies   that $\min \Delta(G_0)=1$.

\smallskip
3. See \cite[Lemma 6.8.5]{Ge-HK06a}.
\end{proof}

\medskip
\begin{theorem}\label{3.5}
Let $H$ be a transfer Krull monoid over a finite abelian group $G$  with $|G|\ge 3$. Then $1\in \Delta_{\rho}(H)$ if and only if $G$ is not cyclic of order $4,6$ or $10$.
\end{theorem}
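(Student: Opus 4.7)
The plan is to reduce the statement to a combinatorial question about $\mathcal B(G)$ and then argue by the structure of $G$. Since $H$ is transfer Krull over $G$, equation \eqref{eq:basic5} gives $\Delta_{\rho}(H) = \Delta_{\rho}(G)$, and Lemma \ref{2.4}.1 shows that $1 \in \Delta_{\rho}(G)$ if and only if $1 \in \Delta_{\rho}^*(G)$. Combining Lemmas \ref{3.2}.2 and \ref{3.2}.3, this is in turn equivalent to the existence of a subset $G_0 \subset G$ with $-G_0 = G_0$ and $\min \Delta(G_0) = 1$ such that every $g \in G_0$ lies in some $U \in \mathcal A(G_0)$ with $|U| = \mathsf D(G)$. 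The remainder of the proof establishes this combinatorial criterion case by case.

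For the ``only if'' direction, let $G = C_n$ with $n \in \{4,6,10\}$. It is classical that the only minimal zero-sum sequences over $C_n$ of length $n$ are the sequences $g^n$ with $\ord(g)=n$, so every admissible $G_0$ must be a $\pm$-symmetric subset of the units of $C_n$, of which there are only finitely many. For each such $G_0$ I compute $\min \Delta(G_0)$ by applying Lemma \ref{3.4}.3 to the list of $g$-norms of all atoms of $\mathcal A(G_0)$. An explicit enumeration shows that in each of these finitely many configurations every atom has odd $g$-norm; hence $\|V\|_g - 1$ is even for every atom $V$, and therefore $\min \Delta(G_0) \ge 2$.

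For the ``if'' direction I distinguish three cases. (A) If $\mathsf r(G) \ge 2$ and $\exp(G) > 2$, I fix a basis $(e_1,\ldots,e_r)$ with $\ord(e_i) = n_i$, $n_1 \t \cdots \t n_r$, set $e_0 = e_1 + \cdots + e_r$, and take $G_0 = \{\pm e_0, \pm e_1, \ldots, \pm e_r\}$; then Lemma \ref{3.4}.2 yields $\min \Delta(G_0) = 1$, and admissibility is verified through the standard extremal atom $e_1^{n_1-1} \cdots e_r^{n_r-1} e_0$ of length $\mathsf D^*(G) = \mathsf D(G)$ together with its image under $g \mapsto -g$ and permutations of the basis roles. (B) If $G \cong C_2^r$ with $r \ge 2$, I take $G_0 = G \setminus \{0\}$, which is automatically $\pm$-symmetric; every nonzero element extends to a basis and so lies in an atom $f_1 \cdots f_r (f_1 + \cdots + f_r)$ of length $r+1 = \mathsf D(G)$, while atoms of lengths $2$ and $3$ are also present, so Lemma \ref{3.4}.1 forces $\min \Delta(G_0) = 1$. (C) If $G = C_n$ cyclic with $n \notin \{4,6,10\}$, I exhibit an atom of $g$-norm $2$ with support in a $\pm$-symmetric $G_0 \subset \{kg : \gcd(k,n) = 1\}$: for $n$ odd the sequence $(2g)^n$ works, while for $n$ even with $n \ge 8$ and $n \ne 10$ I choose an odd $k$ with $\gcd(k,n) = 1$ and $n/2 < k \le 2n/3$ (one may take $k = n/2 + 1$ when $4 \t n$ and $k = n/2 + 2$ when $n \equiv 2 \pmod 4$ and $n \ge 14$) and verify that $g^{2n-3k}(kg)^3$ is a minimal zero-sum sequence of norm $2$. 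Combined with the norm-$1$ atom $g^n$, Lemma \ref{3.4}.3 then gives $\min \Delta(G_0) = 1$.

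The main technical obstacle lies in Case (C): one must check that the interval $(n/2, 2n/3]$ contains an integer coprime to $n$ for every even $n \ge 8$ with $n \ne 10$, while for $n \in \{4, 6, 10\}$ no such integer exists and no alternative construction yields an atom of even $g$-norm either. Precisely this numerical asymmetry pins down $C_4, C_6, C_{10}$ as the excluded cyclic groups in the theorem.
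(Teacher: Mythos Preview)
Your reduction to the combinatorial criterion via Lemmas \ref{2.4}.1 and \ref{3.2} is correct, and Cases (B) and (C) are essentially sound (the arithmetic you sketch in (C) does work out, and the paper handles the cyclic case with a similar flavor of explicit atom constructions).

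The genuine gap is in Case~(A). You assert that the atom $e_1^{n_1-1}\cdots e_r^{n_r-1}e_0$ has length $\mathsf D^*(G)=\mathsf D(G)$, but the equality $\mathsf D^*(G)=\mathsf D(G)$ is \emph{not} known for all finite abelian groups with $\mathsf r(G)\ge 2$; the paper itself recalls (in the discussion preceding Lemma~\ref{3.2}) that equality is established only for $p$-groups and groups of rank at most two, and that groups with $\mathsf D^*(G)<\mathsf D(G)$ are known to exist. For such a group your candidate set $G_0=\{\pm e_0,\ldots,\pm e_r\}$ contains no atom of length $\mathsf D(G)$ at all, so the admissibility condition of Lemma~\ref{3.2}.2 fails and you cannot conclude $1\in\Delta_\rho^*(G)$.

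This is precisely why the paper's proof for $\mathsf r(G)\ge 2$ takes a different route: it starts from an \emph{arbitrary} atom $A$ of length $\mathsf D(G)$, whose existence is guaranteed but whose structure is unknown, and then manufactures two group automorphisms $\phi,\psi$ so that $G_0=\supp\bigl((-A)A\,\phi((-A)A)\,\psi((-A)A)\bigr)$ automatically satisfies the admissibility condition (every element lies in one of $A,\phi(A),\psi(A)$ or their negatives) while simultaneously containing enough elements to force $\min\Delta(G_0)=1$. Your construction in (A) works verbatim whenever $\mathsf D(G)=\mathsf D^*(G)$, so in particular for $p$-groups and rank-two groups, but to cover the general case you need an argument that does not presuppose knowledge of the extremal atoms.
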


\begin{proof}
By \eqref{eq:basic5}, it is sufficient to prove the assertion for $\mathcal B (G)$ instead of $H$. We distinguish two cases.

\medskip
\noindent
CASE 1: \ $\mathsf r(G)\ge 2$.

By Corollary \ref{3.3}.1, it is sufficient to prove that $1\in \Delta_{\rho}^*(G)$.
For each prime $p$ dividing $|G|$, we denote by $G_p$ the Sylow $p$-subgroup of $G$. Since $\mathsf r(G)\ge 2$,   there exists a Sylow-$p$  subgroup $G_p$ such that $\mathsf r(G_p)\ge 2$. We distinguish two subcases.

\medskip
\noindent
CASE 1.1: \  There exists a  Sylow $p$-subgroup $G_p$ such that $\mathsf r(G_p)\ge 2$ and $\exp(G_p)\ge 3$.

Then there exists a subgroup $H$ of $G$ with $p\nmid |H|$ such that $G\cong G_p\oplus H$ (clearly, we may have  $H = \{0\}$). Let $A$ be an atom of $\mathcal B(G)$ with length $|A|=\mathsf D(G)$. Thus for every $g$ dividing $A$, there exists a unique pair $(f_g, h_g)$   with $f_g\in G_p$ and $h_g\in H$ such that $g=f_g+h_g$. Since $\langle\supp(A)\rangle=G$, there must exist $g \in \supp ( A)$ such that $\ord(f_g)=\exp(G_p)$. Therefore we can find $e_2,\ldots, e_{\mathsf r(G_p)}$ such that $G_p = \langle f_g\rangle \oplus\langle e_2\rangle \oplus \ldots \oplus \langle e_{\mathsf r(G_p)} \rangle $. There are group  isomorphisms
$$\phi: G\longrightarrow G \ \text{ by } \phi(f_g)=f_g+e_2,\  \phi(e_i)=e_i \text{ for each $i\in [2, \mathsf r(G_p)]$,  and }\phi(h)=h \text{ for each $h\in H$}\,, $$
and $$\psi: G\longrightarrow G \ \text{ by } \psi(f_g)=f_g-e_2,\  \psi(e_i)=e_i \text{ for each $i\in [2, \mathsf r(G_p)]$,  and }\psi(h)=h \text{ for each $h\in H$}\,. $$
It follows that $\phi(A)$ and $\psi(A)$ are atoms of length $\mathsf D(G)$. We consider the set
\[
G_0=\supp\big( (-A) A \, \phi((-A)A) \, \psi((-A)A) \big) \,.
\]
Obviously, we have
$G_0=-G_0$ and for every $a \in G_0$ there is some $A' \in \mathcal A (G_0)$ with $a \t A'$ and $|A'|=\mathsf D (G)$. Thus, by Lemma \ref{3.2},  it is sufficient to prove $\min \Delta(G_0)=1$. Since
\[
\{g, -g, \phi(g), \psi(g)\}=\{g, -g,  g+e_2, g-e_2\}\subset G_0
\]
 and
\[
 \mathsf L(g^{\ord(g)}(-g)^{\ord(g)})=\{2, \ord(g)\} \quad \text{and} \quad \mathsf L(g^{\ord(g)-2}(g+e_2)(g-e_2)(-g)^{\ord(g)})=\{2, \ord(g)-1\} \,,
\]
it follows that $\min \Delta(G_0)\t \gcd \{\ord(g)-2, \ord(g)-3\}$. Since  $\ord(g)\ge \exp(G_p)\ge 3$, we obtain that $\min \Delta(G_0)=1$.

\medskip
\noindent
CASE 1.2: \  There is no  Sylow $p$-subgroup $G_p$ such that $\mathsf r(G_p)\ge 2$ and $\exp(G_p)\ge 3$.

Let $G_p$ be the Sylow $p$-subgroup with $\mathsf r(G_p)\ge 2$. Then $p=2$, $G_2$ is an elementary $2$-group, and $G\cong C_2^{\mathsf r(G)}\oplus H$, where $H$ is a cyclic subgroup of odd order.

 Let $A$ be an atom of $\mathcal B(G)$ with length $|A|=\mathsf D(G)$. There  exists an element $g_0\in \supp ( A)$ such that $\ord(g_0)$ is even and hence $g_0=f_0+h_0$, where $f_0\in G_2\setminus\{0\}$ and $h_0\in H$.
  We can find $e_2,\ldots, e_{\mathsf r(G)}$ with $\ord(e_i)=2$ for each $i\in [2, \mathsf r(G)]$ such that $G_2\cong \langle f_0\rangle \oplus\langle e_2\rangle \oplus \ldots \oplus \langle e_{\mathsf r(G)} \rangle $. Then we can construct  two group  isomorphisms
$$\phi: G\longrightarrow G \ \text{ by } \phi(f_0)=e_2,\ \phi(e_2)=f_0,\   \phi(e_i)=e_i \text{ for each $i\in [3, \mathsf r(G)]$,  and }\phi(h)=h \text{ for each $h\in H$}\,, $$
and $$\psi: G\longrightarrow G \ \text{ by } \psi(f_0)=f_0+e_2,\  \psi(e_i)=e_i \text{ for each $i\in [2, \mathsf r(G)]$,  and }\psi(h)=h \text{ for each $h\in H$}\,. $$
It follows that $\phi(A)$ and $\psi(A)$ are atoms of length $\mathsf D(G)$. We consider the set
\[
G_0=\supp\big( (-A)A \, \phi((-A)A) \, \psi((-A)A) \big) \,.
\]
Obviously, we have
$G_0=-G_0$ and for every $a \in G_0$ there is some $A' \in \mathcal A (G_0)$ with $a \t A'$ and $|A'|=\mathsf D (G)$. Thus it is sufficient to prove $\min \Delta(G_0)=1$.

Note that $\{g_0, -g_0, \phi(g_0), \psi(g_0)\}=\{g_0, -g_0, e_2+h_0, g_0+e_2\}\subset G_0$.
If $\ord(g_0)=2$, then $h_0=0$ and  $\mathsf L(g_0^2e_2^2(g_0+e_2)^2)=\{2, 3\}$   imply that $\min \Delta(G_0)=1$.
Suppose that $\ord(g_0)\ge 4$. Since
\[
 \mathsf L(g_0^{\ord(g_0)}(-g_0)^{\ord(g_0)})=\{2, \ord(g_0)\} \quad \text{and} \quad \mathsf L(g_0^{\ord(g_0)-2}(g_0+e_2)^2(-g_0)^{\ord(g_0)})=\{2, \ord(g_0)-1\} \,,
\]
it follows that $\min \Delta(G_0)$ divides $\gcd \{\ord(g_0)-2, \ord(g_0)-3\}=1$.

\bigskip
\medskip
\noindent
CASE 2: \ $\mathsf r(G)=1$.

Let $|G|=n$ and $g \in G$ with $\ord (g)=n$. First, we suppose that $n$ is odd.
Then $g^n$ and $(2g)^n$ are atoms of length $\mathsf D(G)=n$, and we set $G_0=\{g, -g, 2g, -2g\}$. Then $G_0=-G_0$ and for every $h \in G_0$ there is some $A \in \mathcal A (G_0)$ with $h \t A$ and $|A|=\mathsf D (G)$. It is sufficient to prove that $\min \Delta(G_0)=1$. In fact, by Lemma \ref{3.4}.1, we obtain that $\min \Delta(G_0)$ divides $\gcd \{|g^n|-2, |g^{n-2}(2g)|-2\}=1$.

Now we suppose that $n$ is even and distinguish two subcases.

\medskip
\noindent
CASE 2.1: \  $n\notin\{4,6,10\}$.

It is sufficient  to show that  $1\in \Delta_{\rho}^*(G)$. We distinguish two cases.

First, suppose that there exists an odd positive divisor $m$ of $\frac{n}{2}+1$ such that $m\ge 5$. Then $\gcd(m,n)=1$. Let  $n=m(t+1)-2$, where $t\ge 1$. Then $A_1=(mg)^tg^{m-2}$, $A_2=(mg)g^{n-m}$, $A_3=(mg)^{2t+1}g^{m-4}$, and $A_4=g^n$ are atoms. Since $A_1^2A_2=A_3A_4$, we obtain that $1\in \Delta(\{g,-g, mg, -mg\})$. By the definition of $\Delta_{\rho}^*(G)$ and  Lemma \ref{3.2}, we have that  $1\in \Delta_{\rho}^*(G)\subset \Delta_{\rho}(G) $.

Second, suppose that for every odd positive divisor $m$ of  $\frac{n}{2}+1$, we have $m\le 3$. Then $\frac{n}{2}+1=2^{\alpha}$ or $\frac{n}{2}+1=3\cdot 2^{\alpha-1}$ where $\alpha\in \N$.  Thus $n+4 \in \{2(2^{\alpha}+1), 2(3\cdot 2^{\alpha-1}+1)\}$. Since $n\notin \{4,6,10\}$, we obtain that $\alpha\ge 3$. Let $g \in G$ with $\ord (g)=n$, and  $n+4=2k$, where $k$ is odd with $k\ge 9$. It follows  that $\gcd(k,n)=1$ and $A_5=(kg)g^{n-k}$,  $A_6=(kg)^3g^{2n-3k}$, $A_7=g^n$ are atoms. Since $A_5^3=A_6A_7$, we have that $1\in \Delta(\{g, -g, kg, -kg\})$.

\medskip
\noindent
CASE 2.2: \  $n\in\{4,6,10\}$.

We have to show that $1\not\in \Delta_{\rho}(G)$.
If $n\in \{4,6\}$, it is easy to check $\Delta_{\rho}(G)=\{n-2\}$. Suppose that $n=10$. Let $$G_0=\bigcup_{A\in \mathcal A(G) \text{ with }|A|=n}\supp(A)=\bigcup_{m\in [1,9] \text{ and }\gcd(m,10)=1}\{mg\}=\{g, -g, 3g, -3g\}\,.$$
Then Lemma \ref{3.2} implies that $\min \Delta(G_0)=\min \Delta_{\rho}^*(G)$.

By Lemma \ref{2.4}.1, we infer that   $\min \Delta_{\rho}^*(G)=\min \Delta_{\rho}(G)$. By Lemma \ref{3.4}.3, $\min \Delta(G_0)=\gcd \{\| V\|_g-1 \mid V\in \mathcal A(G_0) \} =2$ which implies that $1\notin \Delta_{\rho}(G)$.
\end{proof}

\medskip
\begin{lemma}  \label{3.6}
Let  $G = C_{m} \oplus C_{mn}$  with $n\geq 1$  and $m \ge 2$.  A sequence $S$
over $G$ of length $\mathsf D (G) = m+mn-1$ is a minimal zero-sum
sequence if and only if it has one of the following two forms{\rm
\,:}
\begin{itemize}
\medskip
\item \[
      S = e_1^{\ord (e_1)-1} \prod_{i=1}^{\ord (e_2)}
      (x_{i}e_1+e_2),
      \]where
      \begin{itemize}\item[(a)] $\{e_1, e_2\}$ is a basis of $G$,
      \item[(b)] $x_1, \ldots, x_{\ord (e_2)}  \in
      [0, \ord (e_1)-1]$ and $x_1 + \ldots + x_{\ord (e_2)} \equiv 1
      \mod \ord (e_1)$. \end{itemize} In this case, we say that $S$ is of type I(a) or I(b) according to whether $\ord(e_2)=m$ or $\ord(e_2)=mn>m$.

\medskip
\item \[
      S = f_1^{sm - 1} f_2^{(n-s)m+\epsilon}\prod_{i=1}^{m-\epsilon} ( -x_{i} f_1 +
      f_2),
      \] where
\begin{itemize}
      \item[(a)] $\{f_1, f_2\}$ is a generating set for  $G$ with $\ord (f_2) =
      mn$ and $\ord(f_1)>m$,
\item[(b)] $\epsilon\in [1,m-1]$  and
       $s \in [1, n-1]$,
        \item[(c)] $x_1, \ldots, x_{m-\epsilon} \in [1, m-1]$ with $x_1 + \ldots + x_{m-\epsilon} = m-1$,  \item[(d)] either  $s=1$ or
      $mf_1 = mf_2$, with both holding when $n=2$, and
      \item[(e)] either $\epsilon\geq 2$  or $mf_1\neq mf_2$.\end{itemize} In this case, we say that $S$ is of type II.
\end{itemize}
\end{lemma}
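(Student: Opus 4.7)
The plan is to split the equivalence into an easy direction and a hard direction. For the easy direction (sequences of the listed forms are minimal zero-sum of length $\mathsf{D}(G)$), I would do a direct computation. In Type I, writing $\sigma(S) = (\ord(e_1)-1)e_1 + \big(\sum x_i\big) e_1 + \ord(e_2) e_2$, the congruence condition $x_1 + \ldots + x_{\ord(e_2)} \equiv 1 \mod \ord(e_1)$ forces $\sigma(S) = 0$; the length is $(\ord(e_1)-1) + \ord(e_2) = m + mn - 1 = \mathsf{D}(G)$. Minimality follows because any proper zero-sum subsequence would, after expanding in the basis $(e_1,e_2)$, reduce to a nontrivial linear combination in $\langle e_2\rangle$ forbidden by the congruence condition on the $x_i$. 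In Type II one rewrites $\sigma(S) = (sm-1)f_1 + ((n-s)m+\epsilon)f_2 - (m-1)f_1 + (m-\epsilon)f_2$ and uses (c) and (d)–(e) to check $\sigma(S) = 0$ and minimality, again by a coset argument in $G/\langle mf_1 - mf_2\rangle$.

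For the hard direction, the plan is to follow the standard route for inverse Davenport problems on rank two groups. Given a minimal zero-sum sequence $S$ with $|S| = m + mn - 1$, I would first invoke the known lower bound on the maximal multiplicity: some $g \in \supp(S)$ satisfies $\mathsf v_g(S) \geq \ord(g) - \text{(small error)}$. Factoring out $g^{\mathsf v_g(S)}$ and studying the quotient sequence in $G/\langle g\rangle$ or in a complementary direct summand, one is forced into one of two scenarios: either the remaining elements all lie in a single coset of $\langle g\rangle$, which gives Type I with $e_1 = g$, or they split across two cosets related by an element of order $m$, which gives Type II. The structure of the coset distribution of these remaining elements, together with the zero-sum condition, then forces either the congruence on the coefficients $x_i$ (Type I) or the partition $(s, \epsilon, x_1, \ldots, x_{m-\epsilon})$ (Type II).

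The main obstacle is carrying out the case distinctions in a uniform way: deciding whether the high-multiplicity generator has order $m$, $mn$, or an intermediate divisor, and, in the Type II branch, pinning down the precise range of $s \in [1, n-1]$ and $\epsilon \in [1, m-1]$ together with the compatibility conditions (d) and (e) relating $mf_1$ and $mf_2$. Because this classification is already established in the literature (it is a synthesis of the work of Gao, Geroldinger, Grynkiewicz, Reiher, and Schmid on minimal zero-sum sequences of length $\mathsf D(G)$ over rank two groups), I would present the statement as a citation of this body of results and merely verify that the two forms listed exhaust the possible structural types produced by that classification. The computational payoff—used repeatedly in the sequel—is that in every minimal zero-sum sequence of maximal length one can explicitly read off an element of order $\exp(G)$, which is what Lemma \ref{3.2} and Corollary \ref{3.3} require.
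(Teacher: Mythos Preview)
Your proposal is correct and ends up in the same place as the paper: the paper's entire proof is a one-sentence citation to the literature (Gao, Geroldinger, Grynkiewicz, Reiher, Schmid; specifically \cite[Main Proposition 7]{Ge-Gr-Yu15}), and you likewise conclude that the right move is to present the statement as a citation of that body of work. Your additional sketch of the easy direction and the heuristic outline of the hard direction are fine as motivation, but they are not in the paper and are not needed---the paper treats this lemma purely as an imported structural result.
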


\begin{proof}
The characterization of minimal zero-sum sequences of maximal length over groups of rank two was done in a series of papers by   Gao, Geroldinger, Grynkiewicz, Reiher, and Schmid. For the formulation used above we refer to \cite[Main Proposition 7]{Ge-Gr-Yu15}.
\end{proof}

\medskip
\begin{theorem} \label{3.7}
Let $H$ be a transfer Krull monoid over a finite abelian group $G$. If $G$ has rank two, then  $\Delta_{\rho} (H) = \{1\}$.
\end{theorem}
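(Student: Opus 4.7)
The plan is to use the equation $\Delta_\rho(H) = \Delta_\rho(G)$ from \eqref{eq:basic5} to replace $H$ by $\mathcal B(G)$. Since $G$ has rank two, it is not cyclic, and hence Theorem \ref{3.5} immediately gives $1 \in \Delta_\rho(G)$. It remains to establish the reverse inclusion $\Delta_\rho(G) \subseteq \{1\}$, and by Corollary \ref{3.3}.2 this amounts to proving $\min \Delta(G_0) = 1$ for $G_0 = \supp((-U)U)$ and every atom $U \in \mathcal A(G)$ of length $\mathsf D(G) = m+mn-1$, where we write $G \cong C_m \oplus C_{mn}$.

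For such a $U$, I would exhibit two factorizations of $U^2$ in $\mathcal B(G_0)$ of lengths $2$ and $3$; the length-$2$ factorization is trivially $U \cdot U$, and the existence of a length-$3$ factorization then forces $1 \in \Delta(\mathsf L(U^2)) \subseteq \Delta(G_0)$. To produce the length-$3$ factorization, apply Lemma \ref{3.6} and distinguish the cases I and II. In Type I, $U = e_1^{\ord(e_1)-1}\prod_{i=1}^{\ord(e_2)}(x_ie_1+e_2)$ with $\sum_i x_i \equiv 1 \pmod{\ord(e_1)}$. Since $\ord(e_1) \geq 2$, the sequence $e_1^{\ord(e_1)}$ is an atom in $\mathcal B(G_0)$ which divides $U^2$, and the residual
\[
R := U^2 \cdot e_1^{-\ord(e_1)} = e_1^{\ord(e_1)-2}\prod_{i=1}^{\ord(e_2)}(x_ie_1+e_2)^2
\]
is again a zero-sum by virtue of the congruence on $\sum x_i$. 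The goal is then to write $R = A_1 \cdot A_2$ with $A_1, A_2 \in \mathcal A(G_0)$. In Type II the analogous role is played by $f_2^{mn}$, possibly after passing to a higher power $U^k$ to secure divisibility.

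The main obstacle is carrying out the splitting $R = A_1 \cdot A_2$ into two atoms. For Type I the strategy is to partition the multi-set of $2\ord(e_2)$ copies of the elements $(x_ie_1+e_2)$ appearing in $R$ into two equal-sized halves with $x$-sums $T_1, T_2$ and to set $a_j \equiv -T_j \pmod{\ord(e_1)}$ with $a_j \in [0,\ord(e_1)-1]$, yielding pieces $A_j = e_1^{a_j}\cdot(\text{half})$. Since $T_1+T_2 \equiv 2 \pmod{\ord(e_1)}$, the sum $a_1+a_2$ is either $\ord(e_1)-2$ (which is what is needed) or $2\ord(e_1)-2$; the bad case occurs only when $T_1 \equiv T_2 \equiv 1 \pmod{\ord(e_1)}$, and this can be avoided because the $x_i$'s cannot all lie in a single residue class modulo $\ord(e_1)$ -- otherwise the congruence $\sum x_i \equiv 1$ would force $0 \equiv 1 \pmod{\ord(e_1)}$. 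One then verifies that each piece $A_j$ is indeed an atom by observing that any proper zero-sum sub-sequence of $A_j$ would require its count of $(x_ie_1+e_2)$-terms to be a non-trivial multiple of $\ord(e_2)$, impossible since the piece has exactly $\ord(e_2)$ such terms. Type II is structurally analogous but genuinely more delicate: the parameters $s$ and $\epsilon$ together with the side conditions (d), (e) of Lemma \ref{3.6} force a subcase analysis, and ensuring that $f_2^{mn}$ divides some $U^k$ with a compatible splitting of the residual is likely to require $k > 2$ in some subcases. This is where the bulk of the technical work is expected to lie.
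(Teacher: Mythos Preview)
Your overall plan is correct and aligns with the paper's proof: reduce to $\mathcal B(G)$ via \eqref{eq:basic5}, invoke Corollary~\ref{3.3}.2 to reduce to showing $1 \in \Delta\big(\supp((-U)U)\big)$ for every $U \in \mathcal A(G)$ with $|U|=\mathsf D(G)$, and then split into Types~I and~II according to Lemma~\ref{3.6}. Your Type~I argument is essentially the paper's: pull out $e_1^{\ord(e_1)}$ from $U^2$ and split the residual into two atoms by choosing a half of the doubled sequence $x_1^2\cdots x_{\ord(e_2)}^2$ whose sum is $\not\equiv 1 \pmod{\ord(e_1)}$. (Your invocation of Theorem~\ref{3.5} is harmless but redundant: once every $\min\Delta(G_0)$ equals~$1$, Corollary~\ref{3.3}.2 already gives $\max\Delta_\rho(G)=1$, hence $\Delta_\rho(G)=\{1\}$.)

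The point where your proposal diverges from the paper is Type~II. You speculate that one may need $U^k$ with $k>2$; in fact $k=2$ suffices throughout. The paper's device is different in flavor from your Type~I construction and is worth noting: one observes that at least one of $2\big((n-s)m+\epsilon\big)\ge mn$ or $2(sm-1)\ge mn$ holds, which guarantees that either $f_2^{mn}$ or $f_1^{mn}$ divides $U^2$ (using $\ord(f_1)=mn$ in the second subcase, available since then $mf_1=mf_2$). After extracting this atom, the residual $W$ has length $mn+2m-2>\mathsf D(G)$, so it is not an atom; the paper does \emph{not} exhibit a two-atom factorization of $W$ directly but instead proves that $W$ admits no factorization into three or more atoms. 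This is done by analyzing any atom $T\mid W$: writing $T=(f_1')^{r}\cdot \prod_{i\in I}(\cdots)\cdot f_2^{r'}$ (with $f_1=f_1'+\alpha f_2$), the zero-sum constraints pin $r-\sum_{i\in I}y_i$ to a set of at most two values, and a simple counting argument shows three such atoms cannot coexist inside $W$. This exclusion argument is the ``bulk of the technical work'' you anticipated; the key idea you are missing is to argue by contradiction against a three-atom decomposition rather than to construct the two pieces by hand.
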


\begin{proof}
By \eqref{eq:basic5}, we may consider $\mathcal B (G)$ instead of $H$.
Let $G = C_m\oplus C_{mn}$ with $n\in \N$, $m\ge 2$ and let $S$ be a minimal zero-sum sequence of length $\mathsf D(G)$ over $G$.  By Corollary \ref{3.3}.2, it suffices to prove that $1\in  \Delta \big( \supp((-S)S) \big)$.   We distinguish two cases depending on Lemma \ref{3.6}.

\medskip\noindent
CASE 1: \ $S = e_1^{\ord (e_1)-1} \prod_{i=1}^{\ord (e_2)}
      (x_{i}e_1+e_2)$ is of type $I$ in Lemma \ref{3.6}, where $(e_1,e_2)$ is a basis of $G$.
\smallskip

If $x_1=\ldots =x_{\ord(e_2)}$, then $\ord(e_2)x_1\equiv 1\pmod{\ord(e_1)}$ and hence $\gcd(\ord(e_1), \ord(e_2))=1$, a contradiction.
Suppose that $|\{x_1,\ldots, x_{\ord(e_2)}\}|\ge 2$. Then there exists a subsequence $Y=y_1\cdot \ldots \cdot y_{\ord(e_2)}$ of $X=x_1^2\cdot \ldots \cdot x_{\ord(e_2)}^2$  such that $\sigma(Y)\not\equiv 1\pmod{\ord(e_1)}$. Let $\sigma(Y)\equiv \ord(e_1)-a\pmod{\ord(e_1)}$, where $a\in [0,\ord(e_1)-2]$.  Then
\[
T_1=e_1^a\prod_{i=1}^{\ord(e_2)}(y_ie_1+e_2) \quad \text{ and } \quad T_2=e_1^{\ord(e_1)-2}\prod_{i=1}^{\ord (e_2)}
            (x_{i}e_1+e_2)^2T_1^{-1}
\]
are two minimal zero-sum sequences with $S^2=e_1^{\ord(e_1)}\cdot T_1\cdot T_2$ whence $1\in \Delta \big(\supp((-S)S) \big)$.

\medskip\noindent
 CASE 2: \ $S = f_1^{sm - 1} f_2^{(n-s)m+\epsilon}\prod_{i=1}^{m-\epsilon} ( -x_{i} f_1 +
      f_2)$ is of type $II$ in Lemma \ref{3.6}, where $(f_1', f_2)$ is a basis with $\ord(f_1')=m$, $\ord(f_2)=mn$ and $f_1=f_1'+\alpha f_2$, $\alpha\in [1,mn-1]$.
\smallskip

Since $sm-1+(n-s)m+\epsilon=nm+\epsilon-1\ge nm$, we have that $2\big( (n-s)m+\epsilon \big)\ge mn$ or $2(sm-1)\ge mn$. We distinguish two subcases.

\smallskip\noindent
CASE 2.1: \  $2((n-s)m+\epsilon)\ge mn$.
\smallskip

 Then $S^2=f_2^{nm}\cdot f_1^{2sm-2}f_2^{nm-2sm+2\epsilon}\prod_{i=1}^{m-\epsilon} ( -x_{i} f_1 +
      f_2)^2$. It suffices to prove that $$W=f_1^{2sm-2}f_2^{nm-2sm+2\epsilon}\prod_{i=1}^{m-\epsilon} ( -x_{i} f_1 + f_2)^2=(f_1'+\alpha f_2)^{2sm-2}f_2^{nm-2sm+2\epsilon}\prod_{i=1}^{2m-2\epsilon} ( -y_{i} f_1' +(1-\alpha y_i) f_2)\,,$$ where $y_1\cdot \ldots \cdot y_{2m-2\epsilon}=x_1^2\cdot\ldots \cdot x_{m-\epsilon}^2$,
       is a product of two atoms, since this implies that $1\in \Delta \big(\supp( (-S)S ) \big)$.
Note that $\sum_{i\in[1, 2m-2\epsilon]}y_i=2(\sum_{i\in [1, m-\epsilon]}x_i)=2m-2$ and  $|W|=mn+2m-2>\mathsf D(G)$, whence $W$ is not an atom.

Suppose that $s=1$. Then
\[
W=(f_1'+\alpha f_2)^{2m-2}\cdot \prod_{i=1}^{2m-2\epsilon}(-y_if_1'+(1-\alpha y_i)f_2)\cdot f_2^{nm-2m+2\epsilon} \,.
\]
Let $T$ be an atom dividing $W$, say
\[
T=(f_1'+\alpha f_2)^{r}\cdot \prod_{i\in I}(-y_if_1'+(1-\alpha y_i)f_2)\cdot f_2^{r'} \,, \quad \text{where}
\]
\[
I\subset [1, 2m-2\epsilon], \  r\equiv \sum_{i\in I}y_i \pmod{m}, \quad \text{ and} \quad  \alpha (r-\sum_{i\in I}y_i)+|I|+r'\equiv 0\pmod {nm} \,.
\]
If $r= \sum_{i\in I}y_i$, then $|I|+r'\ge mn$ which implies that $I=[1, 2m-2\epsilon]$ and $r'=nm-2m+2\epsilon$. Therefore $WT^{-1}\t (f_1'+\alpha f_2)^{2m-2-r}$,  a contradiction to $\ord(f_1)=\ord(f_1'+\alpha f_2)>m$. Thus $ |r- \sum_{i\in I}y_i|=m$.

Now we assume to the contrary that there exist three atoms $T_1, T_2$, and $T_3$ such that $T_1T_2T_3\t W$, say
\begin{align*}
     T_1&=(f_1'+\alpha f_2)^{r_1}\cdot \prod_{i\in I_1}(-y_if_1'+(1-\alpha y_i)f_2)\cdot f_2^{r_1'}\,,\\
     T_2&=(f_1'+\alpha f_2)^{r_2}\cdot \prod_{i\in I_2}(-y_if_1'+(1-\alpha y_i)f_2)\cdot f_2^{r_2'}\,,\\
     T_3&=(f_1'+\alpha f_2)^{r_3}\cdot \prod_{i\in I_3}(-y_if_1'+(1-\alpha y_i)f_2)\cdot f_2^{r_3'}\,.
\end{align*}
Then $ |r_1- \sum_{i\in I_1}y_i|=|r_2- \sum_{i\in I_2}y_i|=|r_3- \sum_{i\in I_3}y_i|=m$, a contradiction to $r_1+r_2+r_3\le 2m-2$ and $\sum_{i\in I_1}y_i+\sum_{i\in I_2}y_i+\sum_{i\in I_3}y_i\le 2m-2 $.

\smallskip
Suppose that $s\ge 2$. Then $mf_1=mf_2$ whence $\alpha m\equiv m \pmod{mn}$.
Let $T$ be an atom dividing $W$, say
\[
T=(f_1'+\alpha f_2)^{r}\cdot \prod_{i\in I}(-y_if_1'+(1-\alpha y_i)f_2)\cdot f_2^{r'} \,, \quad \text{where}
\]
\[
I\subset [1, 2m-2\epsilon], \ r\equiv \sum_{i\in I}y_i \pmod{m}, \quad \text{and} \quad  \alpha (r-\sum_{i\in I}y_i)+|I|+r'\equiv 0\pmod {nm} \,.
\]
If $r= \sum_{i\in I}y_i$, then $nm\le |I|+r'\le 2m-2\epsilon+nm-2sm+2\epsilon\le nm-2sm+2m$ which  implies that $s=1$, a contradiction.

We claim that $r-\sum_{i\in I}y_i\in \{(2s-1)m, -m\}$.
  If $r<\sum_{i\in I}y_i$, then $\sum_{i\in I}y_i-r=m$. We assume that $r>\sum_{i\in I}y_i$. Then $r-\sum_{i\in I}y_i\in \{m,\ldots, (2s-1)m\}$. Since $|I|+r'\le 2m-2\epsilon+nm-2sm+2\epsilon= nm-2sm+2m$ and $\alpha m\equiv m \pmod{mn}$, we have  $r-\sum_{i\in I}y_i\in \{(2s-2)m, (2s-1)m\}$. If $r-\sum_{i\in I}y_i=(2s-2)m$, then $|I|+r'= 2m-2\epsilon+nm-2sm+2\epsilon$ whence  $T=W$, a contradiction. Therefore $r-\sum_{i\in I}y_i\in \{(2s-1)m, -m\}$.

Now we assume to the contrary that there exist three atoms $T_1, T_2$, and $T_3$ such that $T_1T_2T_3\t W$, say
\begin{align*}
     T_1&=(f_1'+\alpha f_2)^{r_1}\cdot \prod_{i\in I_1}(-y_if_1'+(1-\alpha y_i)f_2)\cdot f_2^{r_1'}\,,\\
     T_2&=(f_1'+\alpha f_2)^{r_2}\cdot \prod_{i\in I_2}(-y_if_1'+(1-\alpha y_i)f_2)\cdot f_2^{r_2'}\,,\\
     T_3&=(f_1'+\alpha f_2)^{r_3}\cdot \prod_{i\in I_3}(-y_if_1'+(1-\alpha y_i)f_2)\cdot f_2^{r_3'}\,.
      \end{align*}
Then there exist two distinct $i,j\in [1,3]$, say $i=1,j=2$, such that  $ r_1- \sum_{i\in I_1}y_i=r_2- \sum_{i\in I_2}y_i=(2s-1)m$. Thus $2sm-2\ge r_1+r_2\ge 2(2s-1)m$, a contradiction.

\smallskip\noindent
CASE 2.2: \  $2(sm-1)\ge mn$.
\smallskip

Then $2s\ge n+1$. Therefore $mf_1=mf_2$ which implies that $\alpha m\equiv m \pmod{mn}$ and $\ord(f_1)=mn$.
Since $S^2=f_1^{nm}\cdot f_1^{2sm-nm-2}f_2^{2nm-2sm+2\epsilon}\prod_{i=1}^{m-\epsilon} ( -x_{i} f_1 +
      f_2)^2$,  it suffices to prove that
      \begin{align*}
     W&=f_1^{2sm-nm-2}f_2^{2nm-2sm+2\epsilon}\prod_{i=1}^{m-\epsilon} ( -x_{i} f_1 + f_2)^2\\
     &=(f_1'+\alpha f_2)^{2sm-nm-2}f_2^{2nm-2sm+2\epsilon}\prod_{i=1}^{2m-2\epsilon} ( -y_{i} f_1' +(1-\alpha y_i) f_2)\,,
      \end{align*} where $y_1\cdot\ldots\cdot y_{2m-2\epsilon}=x_1^2\cdot\ldots\cdot x_{m-\epsilon}^2$,
       is a product of two atoms since this implies that $1\in \Delta \big(\supp( (-S)S ) \big)$. Note that $\sum_{i\in[1, 2m-2\epsilon]}y_i=2(\sum_{i\in [1, m-\epsilon]}x_i)=2m-2$, $2sm-nm-2<mn$ and  $|W|=mn+2m-2>\mathsf D(G)$ whence $W$ is not an atom.

Let $T$ be an atom dividing $W$, say
\[
T=(f_1'+\alpha f_2)^{r}\cdot \prod_{i\in I}(-y_if_1'+(1-\alpha y_i)f_2)\cdot f_2^{r'} \,, \text{where}
\]
\[
I\subset [1, 2m-2\epsilon], \  r\equiv \sum_{i\in I}y_i \pmod{m} , \quad \text{ and} \quad \alpha (r-\sum_{i\in I}y_i)+|I|+r'\equiv 0\pmod {nm} \,.
\]
Suppose that $2s=n+1$. Then
\[
W=(f_1'+\alpha f_2)^{m-2}f_2^{(n-1)m+2\epsilon}\prod_{i=1}^{2m-2\epsilon} ( -y_{i} f_1' +(1-\alpha y_i) f_2) \,,
\]
and we assume to the contrary that there exist three atoms $T_1, T_2$, and $T_3$ such that $T_1T_2T_3\t W$, say
\begin{align*}
     T_1&=(f_1'+\alpha f_2)^{r_1}\cdot \prod_{i\in I_1}(-y_if_1'+(1-\alpha y_i)f_2)\cdot f_2^{r_1'}\,,\\
     T_2&=(f_1'+\alpha f_2)^{r_2}\cdot \prod_{i\in I_2}(-y_if_1'+(1-\alpha y_i)f_2)\cdot f_2^{r_2'}\,,\\
     T_3&=(f_1'+\alpha f_2)^{r_3}\cdot \prod_{i\in I_3}(-y_if_1'+(1-\alpha y_i)f_2)\cdot f_2^{r_3'}\,.
\end{align*}
Then there exist two distinct $i,j\in [1,3]$, say $i=1,j=2$, such that  $ r_1- \sum_{i\in I_1}y_i=r_2- \sum_{i\in I_2}y_i=0$. Thus $2nm\le |I_1|+r_1'+|I_2|+r_2'< (n-1)m+2\epsilon+2m-2\epsilon=nm+m$, a contradiction.

\smallskip
Suppose that $2s\ge n+2$. Consider the atom $T$.
 If $r=\sum_{i\in I}y_i$,  then $nm\le |I|+r'\le 2m-2\epsilon+2nm-2sm+2\epsilon\le (2n-2s+2)m\le nm$.  Therefore  $I=[1,2m-2\epsilon]$ and $r'=2nm-2sm+2\epsilon$ which infers that $T=W$, a contradiction.

 We claim that $r-\sum_{i\in I}y_i\in \{(2s-n-1)m, -m\}$.
 If $r<\sum_{i\in I}y_i$, then $\sum_{i\in I}y_i-r=m$. We assume that $r>\sum_{i\in I}y_i$. Then $r-\sum_{i\in I}y_i\in \{m,\ldots, (2s-n-1)m\}$. Since $|I|+r'\le 2m-2\epsilon+2nm-2sm+2\epsilon\le (2n-2s+2)m$ and $\alpha m\equiv m \pmod{mn}$, we have  $r-\sum_{i\in I}y_i\in \{(2s-n-2)m, (2s-n-1)m\}$. If $r-\sum_{i\in I}y_i=(2s-n-2)m$, then   $I=[1,2m-2\epsilon]$ and $r'=2nm-2sm+2\epsilon$ which infers that $T=W$, a contradiction. Therefore $r-\sum_{i\in I}y_i\in \{(2s-n-1)m, -m\}$.

Assume to the contrary that there exist three atoms $T_1, T_2$, and $T_3$ such that $T_1T_2T_3\t W$, say
\begin{align*}
     T_1&=(f_1'+\alpha f_2)^{r_1}\cdot \prod_{i\in I_1}(-y_if_1'+(1-\alpha y_i)f_2)\cdot f_2^{r_1'}\,,\\
     T_2&=(f_1'+\alpha f_2)^{r_2}\cdot \prod_{i\in I_2}(-y_if_1'+(1-\alpha y_i)f_2)\cdot f_2^{r_2'}\,,\\
     T_3&=(f_1'+\alpha f_2)^{r_3}\cdot \prod_{i\in I_3}(-y_if_1'+(1-\alpha y_i)f_2)\cdot f_2^{r_3'}\,.
      \end{align*}
Then there exist two distinct $i,j\in [1,3]$, say $i=1,j=2$, such that  $ r_1- \sum_{i\in I_1}y_i=r_2- \sum_{i\in I_2}y_i=(2s-n-1)m$. Thus $2sm-nm-2\ge r_1+r_2\ge 2(2s-n-1)m$ and hence $(n+2)m-2\ge 2sm\ge (n+2)m$, a contradiction.
\end{proof}

The characterization of all minimal zero-sum sequences over groups $C_2 \oplus C_2 \oplus C_{2n}$, as given in the next lemma, is due to Schmid (\cite[Theorem 3.13]{Sc11b}).

\medskip
\begin{lemma} \label{3.8}
 Let  $G = C_2\oplus C_2\oplus C_{2n}$ with $n \ge 2$. Then $A \in \mathcal F(G) $ is a minimal zero-
sum sequence of length $\mathsf D(G)$ if and only if there exists a basis $(f_1 ,f_2 ,f_3 )$ of $G$, where
$\ord(f_1 ) = \ord(f_2 ) = 2$ and $\ord(f_3 ) = 2n$, such that $A$ is equal to one of the following sequences{\rm \,:}
\begin{enumerate}
\item[(i)] $f_3^{v_3}(f_3+f_2)^{v_2}(f_3+f_1)^{v_1}(-f_3+f_2+f_1)$ with $v_1, v_2,v_3\in \N$ odd, $v_3\ge v_2\ge v_1$, and $v_3+v_2+v_1=2n+1$.

\smallskip

\item[(ii)] $f_3^{v_3}(f_3+f_2)^{v_2}(af_3+f_1)(-af_3+f_2+f_1)$ with $v_2,v_3\in \N$ odd, $v_3\ge v_2$, $v_2+v_3=2n$, and $a\in [2, n-1]$.

 \smallskip

\item[(iii)] $f_3^{2n-1}(af_3+f_2)(bf_3+f_1)(cf_3+f_2+f_1)$ with $a+b+c=2n+1$ where $a\le b\le c$ and $a,b\in [2,n-1]$, $c\in [2, 2n-3]\setminus\{n,n+1\}$.

\smallskip

\item[(iv)] $f_3^{2n-1-2v}(f_3+f_2)^{2v}f_2(af_3+f_1) \big((1-a)f_3+f_2+f_1 \big)$ with $v\in [0,n-1]$ and $a\in [2,n-1]$.

\smallskip

\item[(v)] $f_3^{2n-2}(af_3+f_2) \big((1-a)f_3+f_2 \big)(bf_3+f_1) \big((1-b)f_3+f_1 \big)$ with $a,b\in [2,n-1]$ and $a\ge b$.

\smallskip

\item[(vi)] $(\prod_{i=1}^{2n}(f_3+d_i)) f_2f_1$ where $S=\prod_{i=1}^{2n}d_i\in \mathcal F(\langle f_1,f_2\rangle)$ with $\sigma(S)=f_1+f_2$.
\end{enumerate}
\end{lemma}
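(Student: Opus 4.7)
The plan is to split the characterization into two directions. For the ``if'' direction, I would verify directly that each of the six listed sequences is a minimal zero-sum sequence of length $\mathsf D(G) = 2n+2$. For the ``only if'' direction, I would show that any minimal zero-sum sequence of length $\mathsf D(G)$ can be brought into one of these forms by a suitable choice of basis.

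For each of (i)--(vi), I would first confirm that the length equals $2n+2$ from the prescribed side conditions (for instance $v_1+v_2+v_3 = 2n+1$ in (i), $v_2+v_3=2n$ in (ii), and so on), then check that the total sum vanishes by computing the $\langle f_3\rangle$-coordinates modulo $2n$ and the $\langle f_1,f_2\rangle$-coordinates modulo $2$. Ruling out a proper, nontrivial zero-sum subsequence is the most delicate step: any such subsequence projects, modulo $\langle f_1,f_2\rangle$, to a short zero-sum in $C_{2n}$ whose solutions are heavily constrained by the parity and range conditions (the oddness of the $v_i$, the bounds $a,b,c \in [2,n-1]$, and so on). A short explicit calculation closes each type.

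For the classification direction, let $A$ be a minimal zero-sum sequence of length $2n+2$. I would project to $G/\langle f_3\rangle \cong C_2 \oplus C_2$ and group the terms of $A$ by the four cosets $0,\,f_1,\,f_2,\,f_1+f_2$, writing $A = A_0\,A_1\,A_2\,A_{12}$. The zero-sum condition forces $|A_1| \equiv |A_{12}| \pmod 2$ and $|A_2| \equiv |A_{12}| \pmod 2$. Next, I would project $A$ to $G/\langle f_1,f_2\rangle \cong C_{2n}$, obtaining a zero-sum sequence of length exceeding $\mathsf D(C_{2n}) = 2n$. By the well-known structure of long zero-sum sequences in a cyclic group, many terms of $A$ share a common $\langle f_3\rangle$-component, which I would take as the generator $f_3$ after a suitable basis change. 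A case analysis on the quadruple $(|A_0|,|A_1|,|A_2|,|A_{12}|)$, in combination with minimality, then pins down the remaining $\langle f_3\rangle$-coordinates from the global zero-sum condition and matches the configuration to one of (i)--(vi), possibly after replacing $f_1,f_2$ by another basis of $\langle f_1,f_2\rangle$.

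The main obstacle is the bookkeeping in this final case analysis. The admissible quadruples $(|A_0|,|A_1|,|A_2|,|A_{12}|)$ compatible with the parity and length constraints are numerous, and for each one must both exclude spurious configurations (those admitting a proper zero-sum subsequence) and identify which of the six normal forms is realized. Carrying this out in full is precisely the content of \cite[Theorem 3.13]{Sc11b}, whose inductive and case-splitting argument I would follow.
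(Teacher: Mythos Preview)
Your proposal is correct and aligned with the paper's treatment: the paper does not prove this lemma but simply attributes it to Schmid, citing \cite[Theorem 3.13]{Sc11b}. Your sketch of the strategy (verifying the six forms directly, then classifying via the coset decomposition modulo $\langle f_3\rangle$ and the structure of long zero-sum sequences in $C_{2n}$) is a reasonable outline of how that cited proof proceeds, and you correctly identify that the full case analysis is precisely what \cite[Theorem 3.13]{Sc11b} carries out.
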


\medskip
\begin{theorem} \label{3.9}
Let $H$ be a transfer Krull monoid over a group $G$ where $G \cong C_2\oplus C_2\oplus C_{2n}$ with $n\ge 2$. Then $\Delta_{\rho} (H)=\{1\}$.
\end{theorem}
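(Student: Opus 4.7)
The plan is to reduce the statement via Corollary \ref{3.3} to a finite case analysis based on the classification in Lemma \ref{3.8}. Since $\mathsf r(G) = 3 \ge 2$, the argument of Theorem \ref{3.5} (Case 1) already yields $1 \in \Delta_{\rho}(G) = \Delta_{\rho}(H)$, so what remains is to prove $\max \Delta_{\rho}(G) \le 1$. By Corollary \ref{3.3}.2 this amounts to showing that for every atom $U \in \mathcal A (G)$ with $|U| = \mathsf D (G) = 2n+2$, the support $G_0 = \supp \bigl( (-U)U \bigr)$ satisfies $\min \Delta (G_0) = 1$, equivalently $1 \in \Delta (G_0)$.

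For each of the six types (i)--(vi) listed in Lemma \ref{3.8}, I would exhibit an explicit zero-sum sequence $W \in \mathcal B (G_0)$ admitting two factorizations of consecutive lengths $k$ and $k+1$. The general template is as follows: pick a short auxiliary atom $T \in \mathcal A (G_0)$ supported in $\supp((-U)U)$, form $W = U \cdot T$, record the factorization $W = U \cdot T$ of length $2$, and then construct a length-$3$ factorization by peeling off a length-$2$ atom of the form $g \cdot (-g)$ with $g \in \supp (U)$ and regrouping the remainder into two atoms using the explicit structure of $G_0$. The key ingredient is the observation that, in the relevant supports, short atoms of the form $(f_3+f_2)^2 (-af_3+f_2+f_1)^2$ or $(-f_3)^2 (af_3+f_1)^2$ are length-$4$ atoms of $\mathcal B (G_0)$, and that sequences of the shape $f_3^{a}(f_3+f_2)^{b}(af_3+f_1)^{c}$ with $b,c$ even and $a+b+c = 2n$ are atoms of length $2n$ (the minimality check follows from the $f_3$-coordinate sum saturating at exactly $2n$, so no proper zero-sum subsequence exists).

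The main obstacle is the case-by-case construction of $T$ and of the length-$3$ factorization, which depends on the precise shape of $U$. For types (i) and (ii), with $G_0 = \{\pm f_3,\ \pm(f_3+f_2),\ \pm(af_3+f_1),\ \pm(af_3+f_2+f_1)\}$, one can take $T = (-f_3)(f_3+f_2)(af_3+f_1)(-af_3+f_2+f_1)$; then $W = U \cdot T$ factors alternatively as $[f_3(-f_3)] \cdot [(f_3+f_2)^2(-af_3+f_2+f_1)^2] \cdot [f_3^{v_3-1}(f_3+f_2)^{v_2-1}(af_3+f_1)^{v_1+1}]$, yielding the desired lengths $2$ and $3$. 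For type (iii) the flexibility in the parameters $a,b,c$ with $a+b+c = 2n+1$ is exploited to produce a similar triple of atoms differing by an elementary $f_3$-transposition, and for type (vi) the choice of the auxiliary $T$ uses the combinatorial structure of the sequence $\prod d_i$ over $\langle f_1, f_2\rangle$ to pair the $(f_3+d_i)$-elements in two different ways. For types (iv) and (v) the presence of the order-$2$ elements $f_2$ (resp.\ $f_1$) in $G_0$ provides additional short atoms such as $f_2 \cdot f_2$, and a parallel peeling argument on the $f_3$-coordinate produces the length-$3$ factorization.

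Once $1 \in \Delta (G_0)$ is verified in each of the six cases, we conclude that $\min \Delta (G_0) = 1$ uniformly over all atoms $U$ of length $\mathsf D (G)$; hence $\max \Delta_{\rho}(G) = 1$ by Corollary \ref{3.3}.2, and combined with $1 \in \Delta_{\rho}(G)$ this gives $\Delta_{\rho}(H) = \Delta_{\rho}(G) = \{1\}$ as claimed.
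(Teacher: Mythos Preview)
Your reduction is exactly the paper's: pass to $\mathcal B(G)$, invoke Corollary \ref{3.3}.2, and show $1\in\Delta\bigl(\supp((-U)U)\bigr)$ for every atom $U$ of length $\mathsf D(G)$ by running through the six shapes of Lemma \ref{3.8}. The appeal to Theorem \ref{3.5} for $1\in\Delta_\rho(G)$ is harmless but redundant, since the case analysis itself already yields it.

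There is, however, a genuine error in your explicit construction for type (ii). The sequence $(f_3+f_2)^2(-af_3+f_2+f_1)^2$ has sum $2(1-a)f_3$, which is nonzero for $a\in[2,n-1]$; so it is not even a zero-sum, let alone an atom. Likewise your proposed third factor $f_3^{v_3-1}(f_3+f_2)^{v_2-1}(af_3+f_1)^{2}$ has $f_3$-coordinate $v_2+v_3-2+2a=2n-2+2a\not\equiv 0\pmod{2n}$. Hence your length-$3$ factorization of $U\cdot T$ simply does not exist when $a\ge 2$, and the argument collapses for type (ii). The paper avoids this by pairing differently: it takes the atom $W=f_3^{2n-1}(f_3+f_2)(af_3+f_1)(-af_3+f_2+f_1)$ and factors $W^2=f_3^{2n}\cdot f_3^{2n-2}(f_3+f_2)^2\cdot (af_3+f_1)^2(-af_3+f_2+f_1)^2$, so that the $af_3$-contributions cancel inside the last factor. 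Your sketches for types (iii)--(vi) are too vague to assess; the paper needs further sub-case distinctions there (for instance type (iii) splits according to whether $c\ge n+2$ or $c\le n-1$, and type (iv) requires two separate divisibility relations to force $\min\Delta(G_0)\mid 1$), so the ``general template'' of peeling off a single $g(-g)$ pair does not carry through uniformly.
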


\begin{proof}
By \eqref{eq:basic5}, we may consider $\mathcal B (G)$ instead of $H$.
Let $S$ be a minimal zero-sum sequence of length $\mathsf D(G)$ over $G$.  By Corollary \ref{3.3}.2, it suffices to prove that $1\in  \Delta \big( \supp( (-S)S ) \big)$.   We distinguish five cases induced by the structural description given by Lemma \ref{3.8}, and use Lemma \ref{3.4}.1 without further mention.

\medskip\noindent
CASE 1: \ $S = f_3^{v_3}(f_3+f_2)^{v_2}(af_3+f_1)(-af_3+f_2+f_1)$ with $a\in [1,n-1]$ as in Lemma \ref{3.8}.(i) or (ii).
\smallskip

Since
\[
W=f_3^{2n-1}(f_3+f_2)(af_3+f_1)(-af_3+f_2+f_1)\in \mathcal A \big( \supp( (-S)S ) \big)
\]
and
\[
W^2=f_3^{2n}\cdot f_3^{2n-2}(f_3+f_2)^2\cdot (af_3+f_1)^2(-af_3+f_2+f_1)^2 \,,
\]
we obtain that $1\in \Delta \big(\supp( (-S)S ) \big)$.

\medskip\noindent
CASE 2: \ $S = f_3^{2n-1}(af_3+f_2)(bf_3+f_1)(cf_3+f_2+f_1)$ as in Lemma \ref{3.8}.(iii).
\smallskip

Suppose that $c\ge n+2$. Then $S^2=f_3^{2n}\cdot f_3^{2n-2a}(af_3+f_2)^2\cdot f_3^{2a-2}(bf_3+f_1)^2(cf_3+f_2+f_1)^2$, where $f_3^{2n-2a}(af_3+f_2)^2$ and  $f_3^{2a-2}(bf_3+f_1)^2(cf_3+f_2+f_1)^2$ are atoms, and hence $1\in \Delta \big(\supp( (-S)S ) \big)$.

Suppose that $c\le n-1$. Then
\[
W_1=(-f_3)^{2a}(af_3+f_2)^2, \ W_2=(-f_3)^{2b}(bf_3+f_1)^2, \ W_3=(-f_3)^{2c}(cf_3+f_2+f_1)^2 \,,
\]
 and $W=(-f_3)(af_3+f_2)(bf_3+f_1)(cf_3+f_2+f_1)$ are atoms with $W_1W_2W_3=W^2\cdot \big((-f_3)^{2n} \big)^2$ whence $1\in \Delta \big( \supp((-S)S) \big)$.

\medskip\noindent
 CASE 3: \ $S = f_3^{2n-1-2v}(f_3+f_2)^{2v}f_2(af_3+f_1) \big((1-a)f_3+f_2+f_1 \big)$ as in Lemma \ref{3.8}.(iv).
\smallskip

Then $\{f_3, -f_3, f_2, af_3+f_1, (1-a)f_3+f_2+f_1\}\subset \supp ((-S)S)$.
Since $W=(-f_3)f_2(af_3+f_1) \big((1-a)f_3+f_2+f_1 \big)$ is an atom of length $4$, we have that $\min \Delta \big(\supp( (-S)S ) \big) \t 2$.

Setting
\[
W_1=(af_3+f_1)^2(-f_3)^{2a} \quad \text{and} \quad W_2= \big((1-a)f_3+f_1+f_2 \big)^2f_3^{2a-2}
\]
we observe that  $W_1W_2(f_2)^2=W^2(f_3(-f_3))^{2a-2}$.  Therefore $\min \Delta \big( \supp((-S)S) \big) \t 2a-3$ which implies that $\min \Delta \big(\supp((-S)S) \big)=1$.

\medskip\noindent
CASE 4: \ $S = f_3^{2n-2}(af_3+f_2) \big((1-a)f_3+f_2 \big)(bf_3+f_1) \big((1-b)f_3+f_1 \big)$ as in Lemma \ref{3.8}.(v).
\smallskip

Since $(-f_3)(af_3+f_2) \big((1-a)f_3+f_2 \big)$ is an atom of length $3$ over $\supp( (-S)S)$, we have that $1\in \Delta \big(\supp((-S)S) \big)$.

\medskip\noindent
CASE 5: \ $S = (\prod_{i=1}^{2n}(f_3+d_i)) f_2f_1$ with $T=\prod_{i=1}^{2n}d_i$ and $\sigma(T)=f_1+f_2$ as in Lemma \ref{3.8}.(vi).
\smallskip

Since $\sigma(T)\neq 0$, we have $|\supp(T)|\ge 2$, say $d_1\neq d_2$. If $d_1+d_2\in \{f_1,f_2\}$, then $(f_3+d_1)(-f_3+d_2)(d_1+d_2)$ is an atom of length $3$ over $\supp((-S)S)$ which implies that $1\in \Delta \big(\supp((-S)S) \big)$. If $d_1+d_2=f_1+f_2$, then $W_1=(f_3+d_1)(-f_3+d_2)f_1f_2$ and $W_2=(f_3+d_1)^2(-f_3+d_2)^2$ are atoms with $W_1^2=W\cdot f_1^2\cdot f_2^2$ whence $1\in \Delta \big( \supp( (-S)S ) \big)$.
\end{proof}

\medskip
\begin{lemma} \label{3.10}
Let $G$ be a finite abelian group with rank $\mathsf r(G)\ge 2$ and $\exp(G)\ge 3$, and let $U\in \mathcal A(G)$  with  $|U|=\mathsf D(G)$. If  there exist independent elements $e_1,\ldots,e_t$ with $t\ge 2$ and an element $g$ such that $\{e_1,\ldots, e_t,g\} \subset \supp(U)$ and $ag=k_1e_1+\ldots+k_te_t$ for some $a\in [1,\ord(g)-1]\setminus\{\frac{\ord(g)}{2}\}$ and with $k_i\in [1, \ord(e_i)-1]$ for all $i\in [1,t]$,  then $\min \Delta \big( \supp((-U)U) \big)=1$.  In particular, if $\supp(U)$ contains a basis of $G$, then $\min \Delta \big( \supp((-U)U) \big)=1$.
\end{lemma}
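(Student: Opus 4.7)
The plan is to apply Lemma~\ref{3.4}.1 to $G_0 = \supp((-U)U)$: since $(-U)U$ is visibly symmetric, $G_0 = -G_0$, so $\min \Delta(G_0)$ divides $|V|-2$ for every atom $V \in \mathcal A(G_0)$. The task therefore reduces to exhibiting atoms in $\mathcal A(G_0)$ whose $(|V|-2)$-values have $\gcd$ equal to $1$. Because $\{g, -g, e_1, -e_1, \ldots, e_t, -e_t\} \subset G_0$, I restrict attention to atoms supported in this finite subset.

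The central construction converts the relation $ag = k_1 e_1 + \cdots + k_t e_t$ into the zero-sum sequences
\[
V = g^a(-e_1)^{k_1}\cdots(-e_t)^{k_t} \quad \text{and} \quad V' = g^{\ord(g)-a}\,e_1^{k_1}\cdots e_t^{k_t}
\]
of lengths $a+\sum k_i$ and $\ord(g)-a+\sum k_i$, the second being a zero-sum because $(\ord(g)-a)g = -ag$. The independence of $e_1, \ldots, e_t$ combined with $k_i \le \ord(e_i) - 1$ forces any zero-sum subsequence of $V$ supported only on $\{-e_1, \ldots, -e_t\}$ to be trivial (similarly for $V'$), so each of $V$ and $V'$ admits an atomic divisor containing $g$. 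These atomic divisors, together with the standard atoms $g(-g)$, $e_i(-e_i)$, $g^{\ord(g)}$, $e_i^{\ord(e_i)}$ and the sign-flipped variants obtained by replacing $(-e_j)^{k_j}$ in $V$ with $e_j^{\ord(e_j) - k_j}$ (another zero-sum, differing in length by $\ord(e_j) - 2k_j$), form the candidate pool. The auxiliary identity $V \cdot V' = g^{\ord(g)} \prod_i (e_i(-e_i))^{k_i}$ already places the distance $\sum k_i - 1$ in $\Delta(G_0)$ by pairing the factorization as two atoms against the factorization into $g^{\ord(g)}$ and length-$2$ atoms, once the atomicity of $V$ and $V'$ is refined into their atomic divisors.

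The main obstacle is certifying that the resulting collection of $(|V|-2)$-values has $\gcd$ equal to $1$; this is precisely where the hypothesis $a \ne \ord(g)/2$ enters. Without that exclusion the coincidence $2ag = 0$ can create a parity obstruction under which every atom supported in $\{\pm e_1, \ldots, \pm e_t, \pm g\}$ has even length, blocking $\gcd = 1$. For the \emph{in particular} statement, given a basis $(e_1, \ldots, e_r) \subset \supp(U)$, note that no nontrivial zero-sum is supported only on the basis (by independence), and the atomicity of $U$ combined with $\mathsf r(G) \ge 2$ forces $\supp(U) \not\subset \bigcup_i \langle e_i\rangle$. Hence some $g \in \supp(U)$ has an expansion $g = \sum c_i e_i$ with at least two nonzero $c_i$, and applying the main statement with $a=1$ and $k_i = c_i$ (restricted to the indices where $c_i \ne 0$) yields the conclusion, after verifying $\ord(g) \ne 2$, which can always be arranged under the hypothesis $\exp(G) \ge 3$ by choosing $g$ suitably.
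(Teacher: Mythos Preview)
Your approach has two genuine gaps.

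\textbf{Main statement.} You never carry out the key step. You correctly observe that Lemma~\ref{3.4}.1 reduces the problem to finding atoms $V \in \mathcal A(G_0)$ whose values $|V|-2$ have $\gcd$ equal to $1$, and you assemble a candidate list, but then you write ``the main obstacle is certifying that the resulting collection \ldots\ has $\gcd$ equal to $1$'' and stop. That obstacle is the entire content of the lemma. Moreover, your candidates $V = g^a(-e_1)^{k_1}\cdots(-e_t)^{k_t}$ and $V'$ need not be atoms (as you note), and passing to unspecified ``atomic divisors containing $g$'' loses all control over their lengths, so the identity $V\cdot V' = g^{\ord(g)}\prod_i(e_i(-e_i))^{k_i}$ no longer yields a usable distance. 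The paper avoids this by first normalizing: it replaces $a$ by the minimal $a' \in [1,\ord(g)-1]\setminus\{\ord(g)/2\}$ with $a'g \in \langle e_1,\ldots,e_t\rangle$ (so $a' \mid \ord(g)$ and $a' < \ord(g)/2$), and replaces each $e_i$ by $\pm e_i$ so that $k_i \le \ord(e_i)/2$. After this, explicit sequences such as $W_1 = g^a e_1^{\ord(e_1)-k_1}e_2^{\ord(e_2)-k_2}\prod_{i\ge 3}(-e_i)^{k_i}$ and $W_2 = g^{2a}e_1^{\ord(e_1)-2k_1}e_2^{\ord(e_2)-2k_2}\prod_{i\ge 3}(-e_i)^{2k_i}$ can be \emph{verified} to be atoms (using minimality of $a$ and the bounds on $k_i$), and the relation $W_1^2 = W_2 \cdot e_1^{\ord(e_1)}\cdot e_2^{\ord(e_2)}$ directly gives $1 \in \Delta(G_0)$. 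The hypothesis $a \ne \ord(g)/2$ is used to guarantee that some $k_i \ne \ord(e_i)/2$, which is needed for $W_2$ to make sense; a separate case handles the situation where some $k_i = \ord(e_i)/2$.

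\textbf{``In particular'' part.} Your claim that one can always choose $g \in \supp(U)\setminus\bigcup_i\langle e_i\rangle$ with $\ord(g)\ne 2$ ``under the hypothesis $\exp(G)\ge 3$'' is not justified and is where the real work lies. The paper argues by contradiction: if every such $g$ has order $2$, one shows (using $|U|=\mathsf D(G)\ge \mathsf D^*(G)$) that the part $T$ of $U$ outside $\bigcup_i\langle e_i\rangle$ has $|T|=1$, that each part $T_i$ of $U$ in $\langle e_i\rangle$ has length $\ord(e_i)-1$ with $\sigma(T_i)=(\ord(e_i)/2)e_i$, and then invokes the structure of zero-sum-free sequences of maximal length in a cyclic group (\cite[Theorem~5.1.10]{Ge-HK06a}) to force $\sigma(T_i)=-e_i$ for an $e_i$ with $\ord(e_i)\ge 3$, a contradiction. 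This argument genuinely uses $|U|=\mathsf D(G)$, which your sketch does not.
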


\begin{proof}
Let $(e_1,\ldots,e_t)$ be independent with $t\ge 2$ and let $g \in G $ such that $\{e_1,\ldots, e_t,g\}\subset \supp(U)$ and $ag=k_1e_1+\ldots+k_te_t$ for some $a\in [1,\ord(g)-1]\setminus\{\frac{\ord(g)}{2}\}$ and with $k_i\in [1, \ord(e_i)-1]$ for every $ i\in [1,t]$.

Now we  assume that $a \in [1,\ord(g)-1]\setminus\{\frac{\ord(g)}{2}\}$ is minimal such that $ag\in \langle e_1,\ldots, e_t\rangle$ which implies that $a\t \ord(g)$ and hence $a\in [1,\lfloor\frac{\ord(g)}{2}\rfloor-1]$. For every $i \in [1,t]$,
we replace $e_i$ by $-e_i$, if necessary, in order to  obtain $k_i\le\ord(e_i)/2 $. Thus we obtain that  $\{e_1,\ldots, e_t\}\subset \supp((-U)U)$ such that $ag=k_1e_1+\ldots+k_te_t$ with $k_i\in [1, \lfloor \ord(e_i)/2\rfloor]$ for every $i\in [1,t]$. Since $a\neq \frac{\ord(g)}{2}$, there exists $i\in [1,t]$, say $i=1$, such that $k_1\neq \ord(e_1)/2$. Now we distinguish two cases.

\smallskip\noindent
CASE 1: \ For all $i \in [1,t]$, we have  $k_i\neq \ord(e_i)/2$.

Then,  by the minimality of $a$,
\[
W_1=g^ae_1^{\ord(e_1)-k_1}e_2^{\ord(e_2)-k_2}\prod_{i\in [3,t]}(-e_i)^{k_i} \quad \text{ and} \quad  W_2=g^{2a}e_1^{\ord(e_1)-2k_1}e_2^{\ord(e_2)-2k_2}\prod_{i\in [3,t]}(-e_i)^{2k_i}
\]
are atoms over $\supp( (-U)U)$. Since $W_1^2=W_2\cdot e_1^{\ord(e_1)}\cdot e_2^{\ord(e_2)}$, we infer that $1\in \Delta \big(\supp((-U)U) \big)$ which implies that $\min \Delta \big(\supp((-U)U) \big) = 1$.

\smallskip\noindent
CASE 2: \ There exists $i\in [2,t]$ such that $k_i=\ord(e_i)/2$.

After renumbering if necessary, there exists $t_0\in [1,t-1]$ such that $k_i\neq \ord(e_i)/2$ for every $i\in [1,t_0]$ and $k_i=\ord(e_i)/2$ for every $i\in [t_0+1,t]$. Then  \[
V_1=g^a\prod_{i\in [1,t]}(-e_i)^{k_i} \quad \text{ and } \quad V_2=g^ae_1^{\ord(e_1)-k_1}\prod_{i\in [2,t]}(-e_i)^{k_i}
\]
are atoms over $\supp((-U)U)$. Since
\[
\begin{aligned}
V_1^2 & = g^{2a}\prod_{i\in [1,t_0]}(-e_i)^{2k_i}\cdot \prod_{i\in [t_0+1,t]}(-e_i)^{\ord(e_i)} \,, \\
V_2^2 & = g^{2a}e_1^{\ord(e_1)-2k_1}\prod_{i\in [2,t_0]}(-e_i)^{2k_i}\cdot \prod_{i\in [t_0+1,t]}(-e_i)^{\ord(e_i)}\cdot e_1^{\ord(e_1)} \,,
\end{aligned}
\]
and $g^{2a}\prod_{i\in [1,t_0]}(-e_i)^{2k_i},\  g^{2a}e_1^{\ord(e_1)-2k_1}\prod_{i\in [2,t_0]}(-e_i)^{2k_i}$ are atoms,
we infer that $$\min \Delta \big( \supp((-U)U) \big)\t \gcd(1+t-t_0-2, 1+t-t_0+1-2 )$$ whence $\min \Delta \big(\supp((-U)U) \big)=1$.

\smallskip
To show the in particular part,  let $\{e_1, \ldots, e_t\}\subset \supp(U)$ be a basis of $G$, and note that  $t\ge \mathsf r(G)$ by \cite[Lemma A.6]{Ge-HK06a}.  For each $i\in [1,t]$, we set $I_i=\{g\in \supp(U) \mid g\in \langle e_i\rangle\}$ and $T_i=\prod_{g\in I_i}g^{\mathsf v_g(U)}$. Then
\[
U=T_1 \cdot \ldots \cdot T_tT , \quad \text{where} \quad 1 \ne T=\prod_{g\in \supp(U)\setminus\cup_{i\in[1,t]}I_i}g^{\mathsf v_g(U)} \,.
\]
 Therefore for every $g\in \supp(T)$, there exists a subset $J \subset [1,t]$ with $|J|\ge 2$ such that  $g=\sum_{j\in J}k_je_j$, where $k_j\in [1, \ord(e_j)-1]$ for each $j\in J$. If  $\ord(g)\neq 2$ for some $g\in \supp(T)$, then the assumptions of the main case hold whence $\min \Delta \big(\supp((-U)U) \big)=1$.

Now suppose that $\ord(g)=2$ for each $g\in \supp(T)$.  Then $\sigma(T_1) \cdot \ldots  \cdot \sigma(T_t)\sigma(T)$ is an atom, $\ord(\sigma(T))=2$,  and $\sigma(T_i)\in \langle e_i\rangle$ for each $i\in [1,t]$. It follows that $\sigma(T_i)=\frac{\ord(e_i)}{2}e_i$ for each $i\in [1,t]$, $|T|=1$, and $\sigma (T)=\frac{\ord(e_1)}{2}e_1+\ldots +\frac{\ord(e_t)}{2}e_t$. Since $|U|=\mathsf D(G)\ge \mathsf D^*(G)\ge 1+\sum_{j=1}^t(\ord(e_j)-1)$ by \cite[Proposition 5.1.7]{Ge-HK06a}, we have $|T_j|=\ord(e_j)-1$ for each $i\in [1,t]$.
Since $\exp(G)\ge 3$, we may assume that $\ord(e_1)\ge 3$ after renumbering if necessary.
Since $e_1\in \supp(T_1)$ and $T_1$ is a zero-sum free sequence over $\langle e_1\rangle$ of length $\ord(e_1)-1$,  we obtain $\sigma(T_1)=-e_1=\frac{\ord(e_1)}{2}e_1$ by \cite[Theorem 5.1.10]{Ge-HK06a}, a contradiction to $\ord(e_1)\ge 3$.
\end{proof}

\medskip
\begin{theorem} \label{3.11}
Let $H$ be a transfer Krull monoid over a group $G$ where $G=C_{p^k}^r$ with $k,r \in \N$, $r \ge 2$, and  $p \in \P$ such that $p^k\ge 3$. Then $\Delta_{\rho} (H) = \{1\}$.
\end{theorem}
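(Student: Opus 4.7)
The plan is to reduce, via \eqref{eq:basic5} and Corollary \ref{3.3}.2, to proving
\[
\min \Delta \big( \supp((-U)U) \big) = 1 \quad \text{for every } U \in \mathcal A(G) \text{ with } |U| = \mathsf D(G) = 1 + r(p^k-1).
\]
Since $\mathsf D(G) \ge 5$, Theorem \ref{3.1}.1 together with Lemma \ref{2.4}.1 already ensures $\Delta_{\rho}(G) \ne \emptyset$, so this reduction will force $\Delta_{\rho}(G) = \{1\}$. To establish the displayed equality I will verify the hypothesis of the in-particular clause of Lemma \ref{3.10}, namely that $\supp(U)$ contains a basis of $G$.

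First, I would introduce the projection $\pi \colon G \to G/pG \cong C_p^r$ and single out $T = \{g \in \supp(U) : \ord(g) = p^k\}$. Every $g \in \supp(U) \setminus T$ has order dividing $p^{k-1}$ and therefore lies in $G[p^{k-1}] \subset pG = \ker \pi$. Hence, if $\pi(T)$ failed to span $G/pG$ over $\F_p$, then $\pi(T) \subset V$ for some hyperplane $V \le G/pG$ and the whole support of $U$ would lie inside $V^* := \pi^{-1}(V)$. Because $V^*$ is a subgroup of $G = C_{p^k}^r$ of order $p^{kr-1}$, it is isomorphic to some $C_{p^{a_1}} \oplus \ldots \oplus C_{p^{a_r}}$ with $\sum a_i = kr-1$ and $a_i \le k$, and a short optimization (the maximum is attained at $a_1 = \ldots = a_{r-1} = k$, $a_r = k-1$) would yield
\[
\mathsf D(V^*) = \mathsf D^*(V^*) \le 1 + (r-1)(p^k-1) + (p^{k-1}-1) = \mathsf D(G) - p^{k-1}(p-1) < \mathsf D(G),
\]
contradicting $U \in \mathcal A(V^*)$ with $|U| = \mathsf D(G)$.

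Consequently $\pi(T)$ spans $G/pG$, and I may pick $g_1, \ldots, g_r \in T$ whose images form an $\F_p$-basis of $G/pG$. Then $\langle g_1, \ldots, g_r \rangle + pG = G$, so the finite $p$-group $G / \langle g_1, \ldots, g_r \rangle$ coincides with its own image under multiplication by $p$; a standard Nakayama-type argument forces $\langle g_1, \ldots, g_r \rangle = G$, and then the order comparison $p^{kr} = |G| \le \prod_i \ord(g_i) = p^{kr}$ forces the $g_i$ to be independent. Hence $(g_1, \ldots, g_r)$ is a basis of $G$ inside $\supp(U)$, and the in-particular part of Lemma \ref{3.10}, applicable since $r \ge 2$ and $\exp(G) = p^k \ge 3$, delivers $\min \Delta \big( \supp((-U)U) \big) = 1$. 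The main obstacle I expect is pinning down the strict inequality $\mathsf D(V^*) < \mathsf D(G)$; once that subgroup-order computation is in place the rest of the argument is routine bookkeeping.
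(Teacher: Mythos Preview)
Your proposal is correct and follows the same route as the paper: reduce via \eqref{eq:basic5} and Corollary \ref{3.3}.2 to showing that $\supp(U)$ contains a basis of $G$ for every $U\in\mathcal A(G)$ of length $\mathsf D(G)$, and then invoke the in-particular clause of Lemma \ref{3.10}. The paper disposes of the ``basis in the support'' step in one line by citing \cite[Proposition 5.1.4]{Ge-HK06a} (which gives $\langle\supp(U)\rangle=G$) and \cite[Lemma A.7]{Ge-HK06a} (every generating set of $C_{p^k}^r$ contains a basis), whereas you reprove these facts by hand via the Frattini quotient $G\to G/pG$ and a Nakayama argument. Your worry about the strict inequality $\mathsf D(V^*)<\mathsf D(G)$ is unnecessary: it is a general fact, used elsewhere in the paper (see the proof of Theorem \ref{3.1}.3, citing \cite[Proposition 5.1.11]{Ge-HK06a}), that proper subgroups always have strictly smaller Davenport constant, so the explicit optimization of $\mathsf D^*(V^*)$ can be dropped.
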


\begin{proof}
By \eqref{eq:basic5}, it is sufficient to consider $\mathcal B (G)$ instead of $H$.
 By Corollary \ref{3.3}.2, we only need to show that $\min \Delta \big(\supp((-U)U) \big) = 1$ for every atom $U\in \mathcal A(G)$ of length $\mathsf D(G)$.
Let $U$ be an atom of length $\mathsf D(G)$. Then $\langle\supp(U)\rangle=G$ by \cite[Proposition 5.1.4]{Ge-HK06a}, and hence $\supp(U)$ contains a basis of $G$ by \cite[Lemma A.7]{Ge-HK06a}. Now Lemma \ref{3.10} implies that $\min \Delta \big(\supp((-U)U) \big) = 1$.
\end{proof}

\smallskip
If $G$ is an elementary $2$-group of rank $r\ge 3$, then the above assumption of Lemma \ref{3.10} never holds true. Thus elementary $2$-groups need a different approach.

\medskip
\begin{lemma} \label{3.12}
Let $G$ be an elementary $2$-group of rank $r\ge 3$ and let $U,V\in \mathcal A(G)$ be distinct atoms of length $\mathsf D(G)$. Then $1\in \Delta(\mathsf L(UV^2))$.
\end{lemma}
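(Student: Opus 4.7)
My plan is to produce two factorizations of $UV^2$ whose lengths differ by $1$, namely $3$ and $4$. The length-$3$ factorization $UV^2 = U \cdot V \cdot V$ is immediate, so the work lies in producing a length-$4$ factorization. Writing $UV^2 = V \cdot UV$, it suffices to prove $3 \in \mathsf L(UV)$: a factorization $UV = W \cdot A_1 \cdot A_2$ into three atoms yields $UV^2 = V \cdot W \cdot A_1 \cdot A_2$ of length $4$.

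Set $C = \supp(U) \cap \supp(V)$ and $k = |C|$. Because $U \ne V$ and every length-$(r+1)$ atom of $C_2^r$ has $r+1$ distinct elements in its support, one checks that $k \in [0, r-1]$ (the case $k=r$ would force $\supp(U) = \supp(V)$ and hence $U=V$). Writing $X$ and $Y$ for the products of elements of $\supp(U)\setminus\supp(V)$ and $\supp(V)\setminus\supp(U)$ respectively, we have $UV = \bigl(\prod_{c \in C} c^2\bigr)\cdot XY$, and $XY$ is zero-sum of length $2(r+1-k)$ since $\sigma(X) = \sigma(Y) = \sigma\bigl(\prod_{c \in C} c\bigr)$ and $2\sigma(X) = 0$. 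To locate the required atom $W$ supported in $\supp(UV)$, fix a basis $(u_1,\ldots,u_r)$ of $G$ contained in $\supp(U)$, so $\supp(U) = \{u_0,u_1,\ldots,u_r\}$ with $u_0 = u_1 + \cdots + u_r$. Pick any $v \in \supp(V)\setminus\supp(U)$ and write $v = \sum_{i \in I}u_i$ with $|I| \in [2,r-1]$ (weights $1$ and $r$ would place $v$ into $\supp(U)$). The basis swap $u_l \leftrightarrow u_0$ for $l\in I$ sends $|I|$ to $r-|I|+1$, so $\{2,r-1\}$ and $[3,r-2]$ are each closed under swaps; for $r\in\{3,4\}$ we necessarily have $|I|\in\{2,r-1\}$, while for $r \ge 5$ this holds in some basis unless every such $v$ has middle weight in $[3,r-2]$.

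In the easy case $|I|=2$, say $v = u_i + u_j$, the triple $W = u_i u_j v$ is a length-$3$ atom in $\supp(UV)$; the case $|I|=r-1$ is symmetric with $W = u_0 u_j v$, where $v = u_0 + u_j$. I would then construct $A_1$ and $A_2$ as modified versions of $U$ and $V$ in which the elements of $W$ are replaced by compensating elements drawn from the other simplex (in pairs when single-element compensation is blocked by minimality), and verify that each $A_i$ is an atom by showing any proper zero-sum subsequence would contradict the minimality of $U$ or $V$. For the remaining middle-weight situation with $r \ge 5$, I would first run the symmetric analysis with $U$ and $V$ interchanged, and if that also fails construct instead a length-$4$ atom $W = u_i u_j u_l v$ with $v = u_i + u_j + u_l \in \supp(V)$ of weight $3$, and exhibit $UV/W = A_1 \cdot A_2$ by an analogous construction. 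The main obstacle throughout is verifying in each subcase that the two pieces of $UV/W$ are genuinely atoms: when $k$ is small no length-$2$ atoms are available in the complement, so the decomposition requires multi-element swaps, and the verification relies on the structural classification of length-$(r+1)$ atoms of $C_2^r$ as ``basis-plus-sum'' simplices, which tightly restricts the zero-sum subsequences that can appear in the modified pieces.
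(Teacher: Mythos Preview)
Your reduction is sound: $\min\mathsf L(UV^2)=3$ via $U\cdot V\cdot V$, so once $4\in\mathsf L(UV^2)$ you get $1\in\Delta(\mathsf L(UV^2))$, and writing $UV^2=V\cdot(UV)$ it is enough to show $3\in\mathsf L(UV)$. But the proof of $3\in\mathsf L(UV)$ is not carried out. You specify the short atom $W$ only in the cases $|I|\in\{2,r-1\}$; the complementary factorization $UV/W=A_1A_2$ is left as ``I would then construct\ldots'' with neither explicit $A_1,A_2$ nor any minimality verification, and the middle-weight case $|I|\in[3,r-2]$ (for $r\ge 5$) is handled entirely by analogy. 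You yourself identify this as ``the main obstacle throughout''. Note that even in the easy case $|I|=2$ the obvious candidates $U/(u_iu_j)$ and $V/v$ are \emph{not} zero-sum (both have sum $v$), so nontrivial mixing is required and must be made explicit. As written this is a plan, not a proof.

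The paper takes a different and much shorter route: instead of aiming for lengths $3$ and $4$, it exhibits lengths $r+1$ and $r+2$ in $\mathsf L(UV^2)$. Pick $e_0\in\supp(U)\setminus\supp(V)$, so that $\supp(U)=\{e_0,e_1,\dots,e_r\}$ with $(e_1,\dots,e_r)$ a basis and $e_0=e_1+\cdots+e_r$, and write $V=e_{I_1}\cdots e_{I_{r+1}}$ with each $I_j\subsetneq[1,r]$. A short combinatorial argument shows one can pick two indices, say $1,2$, with $I_1\cap I_2$, $I_1\setminus I_2$, $I_2\setminus I_1$ all nonempty. Then
\[
W_1=e_{I_1}e_{I_2}\prod_{i\in I_1\triangle I_2}e_i,\qquad
W_2=e_0\,e_{I_1}e_{I_2}\prod_{i\in[1,r]\setminus(I_1\triangle I_2)}e_i
\]
are atoms (the three nonemptiness conditions give minimality directly) with $W_1W_2=U\cdot e_{I_1}^2e_{I_2}^2$. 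Hence
\[
UV^2=U\cdot\prod_{j=1}^{r+1} e_{I_j}^2=W_1\cdot W_2\cdot\prod_{j=3}^{r+1}e_{I_j}^2,
\]
giving factorizations of lengths $r+2$ and $r+1$. No case analysis on $|I|$ is needed. The idea you are missing is that it suffices to merge $U$ with \emph{two squared elements of $V$} into two atoms, rather than to split $UV$ itself into three pieces; this sidesteps the delicate mixing problem entirely.
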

\begin{proof}
Since $U$ and $V$ are distinct, there exists an element $g\in \supp(U)\setminus \supp(V)$, and clearly $\supp(U)\setminus \{g\}$ is a basis of $G$. We set $\supp(U)\setminus \{g\}=\{e_1,\ldots, e_r\}$,  $g=e_0=e_1+\ldots+e_r$, and then $U= e_0e_1 \cdot \ldots \cdot e_r$. Since $\{e_1, \ldots, e_r\}$ is a basis of $G$, $V$ can be written in the form $V=e_{I_1} \cdot \ldots \cdot e_{I_{r+1}}$, where  $\emptyset \ne I_j \subset [1,r]$ and  $e_{I_j}=\sum_{i\in I_j}e_i$ for every  $j\in [1,r+1]$. We continue with the following assertion.

\smallskip
\begin{enumerate}
\item[{\bf A.}\,] There exist two distinct $k_1,k_2 \in [1,r+1]$  such that   $I_{k_1}\cap I_{k_2}\neq \emptyset$, $I_{k_1}\setminus I_{k_2}\neq \emptyset$, and $I_{k_2}\setminus I_{k_1}\neq \emptyset$.
\end{enumerate}

\noindent
{\it Proof of} \,{\bf A}.\, First, we choose $I$, say $I=I_1$, to be maximal in $\{I_j\mid j\in[1,r+1]\}$. Note that $e_0\not\in \supp(V)$ and hence $I_j\neq [1,r]$ for every $j\in[1,r+1]$. Since $I_1\subset \cup_{j\in[2,r+1]}I_j$, we can choose $K\subset [2,r+1]$ to be minimal such that $I_1\subset \cup_{j\in K}I_j$. Then $I\cap I_k\neq \emptyset$ and $I\setminus I_k\neq \emptyset$ for all $k\in K$. If there exists $k\in K$ such that $I_k\setminus I_1\neq\emptyset $, then we are done. Otherwise, $I_k\subset I_1$ for all $k\in K$. By the maximality of $I_1$, we know that $|K|\ge 2$ and by the minimality of $K$, we have  that $I_{k_1}\setminus I_{k_2}\neq \emptyset$ and $I_{k_2}\setminus I_{k_1}\neq \emptyset$ for every two distinct  $k_1$ and $k_2$. Assume to the contrary that $I_{k_1}\cap I_{k_2}= \emptyset$  for every distinct  $k_1$ and $k_2$. Thus $e_{I_1}\prod_{k\in K}e_{I_k}$ is an atom, a contradiction to $|V|=\mathsf D(G)$. \qed[Proof of {\bf A}]

\smallskip
After renumbering if necessary, we suppose that $I_{1}\cap I_{2}\neq \emptyset$, $I_{1}\setminus I_{2}\neq \emptyset$, and $I_{2}\setminus I_{1}\neq \emptyset$. We define
\[
W_1=e_{I_1}e_{I_2}\prod_{i\in (I_1\cup I_2)\setminus (I_1\cap I_2)}e_i \,, \quad  \quad W_2=e_0e_{I_1}e_{I_2}\prod_{i\not\in (I_1\cup I_2)\setminus (I_1\cap I_2) }e_i \,,
\]
and observe that $W_1,W_2$ are atoms. Since
\[
UV^2 = U\cdot e_{I_1}^2\cdot e_{I_2}^2\cdot \prod_{j\in [3,r+1]}e_{I_j}^2
= W_1\cdot W_2\cdot \prod_{j\in [3,r+1]}e_{I_j}^2\,,
\]
we obtain that $1\in\Delta (\mathsf L(UV^2))$.
\end{proof}

\bigskip
\begin{theorem} \label{3.13}
Let $H$ be a transfer Krull  monoid over an elementary $2$-group $G$  of rank $r\ge 2$. Then $\Delta_{\rho}^*(H)=\Delta_{\rho}(H)=\{1, r-1\}$.
\end{theorem}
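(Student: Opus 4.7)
The plan is to reduce to $\mathcal B(G)$ via \eqref{eq:basic5} and dispose of the base case $r = 2$ at once: there $G \cong C_2 \oplus C_2$ has rank two, so Theorem \ref{3.7} gives $\Delta_{\rho}(H) = \{1\}$, and $\mathsf L(U^2) = \{2,3\}$ for the unique length-$\mathsf D(G)$ atom $U$ gives $\Delta_{\rho}^*(H) = \{1\}$, matching $\{1, r-1\}$. Assume henceforth $r \ge 3$, so $\mathsf D(G) = r+1$ and $\rho(G) = (r+1)/2$; I will prove the chain $\{1, r-1\} \subseteq \Delta_{\rho}^*(G) \subseteq \Delta_{\rho}(G) \subseteq \{1, r-1\}$ in three steps.

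For $\{1, r-1\} \subseteq \Delta_{\rho}^*(G)$, fix a basis $(e_1, \ldots, e_r)$ of $G$, set $e_0 = e_1 + \cdots + e_r$, and let $U = e_0 e_1 \cdots e_r \in \mathcal A(G)$. Using $e_0 = e_1 + \cdots + e_r$ and independence of $e_1, \ldots, e_r$, a divisor $\prod_{i=0}^r e_i^{a_i}$ of some $U^n$ has zero sum iff all exponents $a_i$ share a common parity, so the atoms with support in $\supp U$ are exactly $U$ and the squares $e_i^2$. A direct count then gives $\mathsf L(U^{2k}) = \{2k + j(r-1) \mid j \in [0, k]\}$, an arithmetic progression of difference $r-1$, length $k+1$, and elasticity $\rho(G)$, whence $r-1 \in \Delta_{\rho}^*(G)$. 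For the difference $1$, pick a length-$(r+1)$ atom $V \ne U$ (possible since $r \ge 3$): Lemma \ref{3.2}.1 gives $\rho(\mathsf L(U^2V^2)) = \rho(G)$, Lemma \ref{3.12} gives $1 \in \Delta(\mathsf L(UV^2))$, and the inclusion $1 + \mathsf L(UV^2) \subseteq \mathsf L(U \cdot UV^2) = \mathsf L(U^2V^2)$ transfers the gap of one to $\mathsf L(U^2V^2)$, so $1 \in \Delta_{\rho}^*(G)$. Conversely, Lemma \ref{3.2}.3 writes any element of $\Delta_{\rho}^*(G)$ as $\min \Delta(G_0)$ for some $G_0 = \supp(A)$ with $A \in \mathcal B(G)$ of full elasticity, and Lemma \ref{3.2}.1 expresses $G_0$ as the union of the supports of the length-$(r+1)$ atoms in some such factorization of $A$. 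If only one such support occurs, the parity argument above gives $\mathcal A(\mathcal B(G_0)) = \{U\} \cup \{g^2 : g \in G_0\}$ and $\min \Delta(G_0) = r-1$; if two distinct length-$(r+1)$ atoms are present, Lemma \ref{3.12} yields $1 \in \Delta(G_0)$ and $\min \Delta(G_0) = 1$. Hence $\Delta_{\rho}^*(G) = \{1, r-1\}$.

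The remaining inclusion $\Delta_{\rho}(G) \subseteq \{1, r-1\}$ is the most delicate. Corollary \ref{3.3}.1 restricts $\Delta_{\rho}(G)$ to the set of divisors of $r-1$, so the task is to exclude every divisor $d$ of $r-1$ with $1 < d < r-1$. Suppose such a $d$ lies in $\Delta_{\rho}(G)$ and choose $a_k \in \mathcal B(G)$ realizing AAPs with $\rho(\mathsf L(a_k)) = \rho(G)$, difference $d$, and central length at least $k$. Passing to a subsequence I may assume $\supp(a_k) = G_0$ is constant. If $G_0$ is the support of a single length-$(r+1)$ atom, the structural analysis above confines $\mathsf L(a_k) \subseteq y_k + (r-1)\Z$, and the containment of the central AP of difference $d$ inside this set forces $(r-1) \mid d$, contradicting $d < r-1$. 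Otherwise $G_0$ contains distinct length-$(r+1)$ atoms $U_0, V_0$; since $\rho(\mathsf L(a_k)) = \rho(G)$ forces all multiplicities of $a_k$ to be even, and since $|a_k| \to \infty$ (a long AAP has length bounded by $|a_k|/2$), a pigeonhole over the finite list of length-$(r+1)$ atoms in $\mathcal A(\mathcal B(G_0))$ produces $(U_0^2 V_0^2)^{m_k} \mid a_k$ with $m_k \to \infty$. The calculation of the previous paragraph then embeds an arithmetic progression of difference $1$ and length $m_k + 1$ inside $\mathsf L(a_k)$, which for $m_k$ larger than the global AAMP bound of Lemma \ref{2.3} cannot be housed inside the bounded boundary of an AAP of difference $d > 1$, the required contradiction. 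The main obstacle is this final pigeonhole-plus-parity step, which requires showing that any family of $a_k$'s with rich support $G_0$, elasticity $\rho(G)$, and unbounded length must contain two length-$(r+1)$ atoms whose multiplicities in $a_k$ grow in tandem; once this is established, the rest of the argument follows from the incompatibility between a dense progression of difference $1$ and an AAP of larger difference.
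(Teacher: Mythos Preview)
Your treatment of $\{1,r-1\}\subset\Delta_\rho^*(G)$ and of $\Delta_\rho^*(G)\subset\{1,r-1\}$ is correct and essentially parallel to the paper's (the paper cites Theorem~\ref{3.5} for $1\in\Delta_\rho^*(G)$ instead of arguing directly via Lemma~\ref{3.12}, but your route is fine and in fact more self-contained).

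The genuine gap is in the inclusion $\Delta_\rho(G)\subset\{1,r-1\}$, and you identify it yourself. The pigeonhole you propose --- that $(U_0^2V_0^2)^{m_k}\mid a_k$ with $m_k\to\infty$ --- does not follow from the hypotheses and is false in general: take three distinct length-$(r{+}1)$ atoms $U,V,W$ whose supports cover $G_0$ and set $a_k=U^{2k}V^2W^2$; then $\supp(a_k)=G_0$ and $\rho(\mathsf L(a_k))=\rho(G)$, yet for any $g\in\supp(V)\setminus\supp(U)$ the multiplicity $\mathsf v_g(a_k)$ stays bounded, so no growing power of $V^2$ divides $a_k$. More importantly, the whole manoeuvre of producing a \emph{long} progression of difference $1$ is unnecessary. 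The AAP condition already gives $\mathsf L(B_k)\subset y+d\Z$, so a \emph{single} pair of consecutive integers in $\mathsf L(B_k)$ forces $d=1$.

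The paper exploits exactly this shortcut. Take $k$ large enough that $m:=\min\mathsf L(B_k)$ exceeds the (finite) number $N$ of atoms of length $\mathsf D(G)$; this is possible since $\max\mathsf L(B_k)-\min\mathsf L(B_k)\ge kd$ while $\max\mathsf L(B_k)=\tfrac{r+1}{2}\,m$, so $m$ grows with $k$. Write $B_k=U_1\cdots U_m$ with all $|U_i|=\mathsf D(G)$ (Lemma~\ref{3.2}.1). If all $U_i$ coincide, then $\supp(B_k)=\supp(U_1)$ and $\Delta(\supp(B_k))=\{r-1\}$, whence $(r-1)\mid d$, a contradiction. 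Otherwise, since $m>N$, some atom $U$ repeats among the $U_i$ while some $V\ne U$ also occurs, so $U^2V\mid B_k$ as sequences. Lemma~\ref{3.12} gives $1\in\Delta(\mathsf L(U^2V))$, hence $1\in\Delta(\mathsf L(B_k))$, hence $d=1$, the desired contradiction. No passage to a constant-support subsequence, no growing powers, no appeal to Corollary~\ref{3.3}.1 or to the AAMP bound of Lemma~\ref{2.3} are needed.
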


\begin{proof}
By \eqref{eq:basic5}, it is sufficient to consider $\mathcal B (G)$ instead of $H$.
Let $(e_1,\ldots, e_r)$ be a basis of $G$ and $S= e_0e_1 \cdot \ldots \cdot e_r \in \mathcal A(G)$, where $e_0=e_1+\ldots +e_r$. Then $\Delta \big(\supp(S) \big)=\{r-1\}$ and hence $r-1\in \Delta_{\rho}^*(G)$. By Theorem \ref{3.5}, we have that $\Delta_{\rho}(G)\supset \Delta_{\rho}^*(G)\supset\{1, r-1\}$. Thus it remains to prove that $\Delta_{\rho}(G)\subset \{1,r-1\}$.

Since $\max \Delta_{\rho}(G) \le \max \Delta (G) =r-1$ by \cite[Theorem 6.7.1]{Ge-HK06a}, we may suppose that  $r\ge 4$. Assume to the contrary that there exists $d\in \Delta_{\rho}(G)\setminus \{1,r-1\}$.
Then for every $k \in \N$ there is a $B_k \in \mathcal B (G)$ such that $\rho \big( \mathsf L (B_k) \big) = \mathsf D (G)/2$ and $\mathsf L (B_k)$ is an AAP with difference $d$ and length $\ell \ge k$.  Lemma \ref{3.2}.1 implies that $B_k$ is a product of atoms having length $\mathsf D(G)$.  We fix $k=|\{A\in\mathcal A(G)\mid |A|=\mathsf D(G) \}|+1$. If $B_k=U^t$ with $t\in \N$  for some  $U\in \mathcal A(G)$ with $|U|=\mathsf D(G)$, then $r-1=\min \Delta \big(\supp(U) \big)=\min \Delta \big( \supp(B_k) \big)\t d$, a contradiction. Otherwise, the choice of $k$ implies that there are distinct atoms $U,V\in \mathcal A(G)$ with $|U|=|V|=\mathsf D(G)$ such that $U^2V\t B_k$. By Lemma \ref{3.12}, $1\in \Delta(\mathsf L(U^2V))\subset \Delta(\mathsf L(B_k))$ and hence $d\t 1$, a contradiction.
\end{proof}

\bigskip
\begin{theorem} \label{3.14}
Let $H$ be a transfer Krull monoid over a finite cyclic group $G$  of order $n \ge 3$. Then $n-2\in \Delta_{\rho}^*(H)=\Delta_{\rho}(H)$.
\end{theorem}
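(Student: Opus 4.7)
My plan decomposes into showing $n-2 \in \Delta_\rho^*(H)$ and establishing the equality $\Delta_\rho^*(H) = \Delta_\rho(H)$. By \eqref{eq:basic5} I will reduce to the case $H = \mathcal B(G)$ with $G = C_n = \langle g \rangle$.

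For the membership, I take $G_0 = \{g, -g\}$. The atoms over $G_0$ are exactly $g^n$, $(-g)^n$, and $g(-g)$. Since $g^n \in \mathcal A(G_0)$ has length $\mathsf D(G) = n$ and $G_0 = -G_0$, Lemma \ref{3.2}.2 guarantees $G_0 = \supp(A)$ for some $A$ with $\rho(\mathsf L(A)) = n/2$; explicitly $A = g^n(-g)^n$ works, with $\mathsf L(A) = \{2, n\}$. Applying Lemma \ref{3.4}.3 with the generator $g \in G_0$, the $g$-norms minus one of the three atoms are $0$, $n-2$, and $0$, so $\min \Delta(G_0) = n-2$; Lemma \ref{3.2}.3 then places $n-2 \in \Delta_\rho^*(G)$.

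For the equality, the inclusion $\Delta_\rho^*(G) \subseteq \Delta_\rho(G)$ is Lemma \ref{2.4}.1. For the reverse direction I would start with $d \in \Delta_\rho(G)$ and, for each $k$, choose $A_k$ with $\rho(\mathsf L(A_k)) = n/2$ and $\mathsf L(A_k)$ an AAP of difference $d$ and length $\ge k$. Lemma \ref{3.2}.1 forces $A_k$ to be a product of atoms of length $n$; since the only length-$n$ atoms in $\mathcal A(C_n)$ are the sequences $(bg)^n$ with $\gcd(b, n) = 1$, one obtains $\supp(A_k) \subseteq G^* := \{bg : \gcd(b, n) = 1\}$, and Lemma \ref{3.2}.2 yields $\supp(A_k) = -\supp(A_k)$. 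Because $G^*$ is finite I pass to a subsequence on which $\supp(A_k) = G_0$ is constant. As $\mathcal B(G_0)$ is divisor-closed in $\mathcal B(G)$, the distance $d$ lies in $\Delta(G_0)$; setting $d_0 := \min \Delta(G_0)$, Lemma \ref{3.2}.3 places $d_0 \in \Delta_\rho^*(G)$ and Lemma \ref{2.2}.1 yields $d_0 \mid d$.

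The remaining and main step is to upgrade $d_0 \mid d$ to $d_0 = d$, which is the hard part. The strategy is to exploit the gcd description in Lemma \ref{3.4}.3: a Bezout combination of atoms in $\mathcal A(G_0)$ yields an element $B \in \mathcal B(G_0)$ of maximal elasticity whose set of lengths contains two values differing by exactly $d_0$. Multiplying $A_k$ by sufficiently large powers of $B$ (preserving maximal elasticity via Lemma \ref{2.2}.3), the sumset inclusion $\mathsf L(A_k) + \mathsf L(B^r) \subseteq \mathsf L(A_k B^r)$ introduces into $\mathsf L(A_k B^r)$ arbitrarily long arithmetical progressions of step $d_0$ alongside the long core with step $d$. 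Since $d_0 \le d$ and $d_0 \mid d$, intermediate values in $y + d_0 \mathbb Z$ strictly between consecutive multiples of $d$ cannot be absorbed into the globally bounded beginning and end parts $L'$, $L''$ of any AAP description with difference $d$ (the global bound being supplied by the AAMP structure theorem for Krull monoids with finite class group), forcing $d = d_0$ and hence $d \in \Delta_\rho^*(G)$. The main obstacle is making this density argument rigorous: turning the gcd of Lemma \ref{3.4}.3 into actual long arithmetical progressions of step $d_0$ inside $\mathcal L(G_0)$ and matching them against the AAP structure of $\mathsf L(A_k)$.
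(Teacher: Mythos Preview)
Your treatment of $n-2 \in \Delta_\rho^*(G)$ and the setup for the reverse inclusion are fine and match the paper's approach. The gap is in the final step, where you try to upgrade $d_0 \mid d$ to $d_0 = d$.

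Your proposed argument studies $\mathsf L(A_k B^r)$ and observes that it contains long arithmetical progressions of both steps $d$ and $d_0$. You then claim this is incompatible with ``any AAP description with difference $d$''. But nothing forces $\mathsf L(A_k B^r)$ to be an AAP with difference $d$; the definition of $d \in \Delta_\rho(G)$ only produces \emph{some} sets of lengths that are AAPs with difference $d$, and $\mathsf L(A_k B^r)$ need not be among them. The AAMP structure theorem only tells you $\mathsf L(A_k B^r)$ is an AAMP with \emph{some} difference $d' \mid d_0$ and some period $\mathcal D$, and such an AAMP can comfortably accommodate long progressions of both steps $d_0$ and $d$ simultaneously. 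So no contradiction emerges, and the density argument does not close.

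The paper's argument is much more direct and works \emph{inside} $A_k$ rather than passing to a product. By pigeonhole (taking $k = n(n-1)+1$), some atom $U_1 = g^n$ occurs with multiplicity at least $n+1$ in the factorization $B_k = U_1^{n+1} U_2 \cdots U_r$. For any $V \in \mathcal A(G_0)$ one has $\mathsf v_h(V) \le n-1$ for each $h \in G_0$, so $V \mid U_1 \cdots U_r$ and hence $U_1^n V \mid B_k$. Now $\{n+1,\ \|V\|_g + n\} \subset \mathsf L(U_1^n V)$: the first from the obvious factorization, the second by writing each factor $(m g)$ of $V$ as part of an atom $(mg)g^{n-m}$ and grouping the remaining $g$'s into copies of $g^n$. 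Since $\mathsf L(B_k) \subset y + d\Z$ and $\mathsf L(U_1^n V) + \mathsf L\big(B_k(U_1^nV)^{-1}\big) \subset \mathsf L(B_k)$, any two elements of $\mathsf L(U_1^n V)$ differ by a multiple of $d$; in particular $d \mid \|V\|_g - 1$. This holds for every $V \in \mathcal A(G_0)$, so by Lemma~\ref{3.4}.3, $d \mid \gcd\{\|V\|_g - 1 : V \in \mathcal A(G_0)\} = d_0$. The missing idea in your approach is precisely this: exploit that $\mathsf L(A_k)$ itself lies in a single residue class mod $d$, and produce divisors of $A_k$ whose length sets realize every $\|V\|_g - 1$ as a difference.
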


\begin{proof}
By \eqref{eq:basic5}, it is sufficient to consider $\mathcal B (G)$ instead of $H$.
Since $ n-2 \in \Delta_{\rho}^*(G) \subset \Delta_{\rho}(G)$,  it remains to verify that  $\Delta_{\rho}(G)\subset \Delta_{\rho}^*(G)$.

Let $d\in \Delta_{\rho}(G)$. Then for every $k \in \N$ there is a $B_k \in \mathcal B (G)$ such that $\rho \big( \mathsf L (B_k) \big) = \mathsf D (G)/2$ and $\mathsf L (B_k)$ is an AAP with difference $d$ and length $\ell \ge k$. Thus $\gcd \Delta(\mathsf L(B_k))=d$.
We set $k=n(n-1)+1$,  $G_0=\supp(B_k)$, and claim that $\min \Delta(G_0)=\gcd \Delta(\mathsf L(B_k))$ which implies that $d=\min \Delta(G_0)\in \Delta_{\rho}^*(G)$.

Clearly,  $\min \Delta(G_0)\t d$, and hence it remains to prove that $d\t \min \Delta(G_0)$.
By Lemma \ref{3.2}, $B_k$ is a product of atoms having length $\mathsf D (G)=n$. Note that $|\supp(U)|=1$ for all atoms of length $n$ and $|\{U\in \mathcal A(G)\mid |U|=n\}|\le n-1$.
 Thus $k=n(n-1)+1$ implies that  $B_k$ is a product of the form
\[
B_k = U_1^{n+1}U_2\cdot  \ldots \cdot U_r\,,
\]
where $r\in \N$, $U_1, \ldots, U_r$ are  atoms of length $n$, and  $U_1=g^n$, where $g\in G$ with $\ord(g)=n$.

Then for every atom $V\in \mathcal A(G_0)$,  we have $V \t U_1\cdot  \ldots \cdot U_r$ and $\{n+1, \| V\|_g+n\} \subset \mathsf L (U_1^nV)$. Therefore $d\t \| V\|_g-1$ for all $V\in \mathcal A(G_0)$ whence $d$ divides $\gcd \{\| V\|_g-1\mid V\in \mathcal A(G_0) \}$. Since $\min \Delta(G_0)=\gcd\{\| V\|_g-1\mid V\in \mathcal A(G_0) \}$ by Lemma \ref{3.4}.3, the claim follows.
\end{proof}

\medskip
\begin{corollary} \label{3.15}
We have $\Delta_{\rho}(C_4)=\{2\}$, $\Delta_{\rho}(C_5)=\{1,3\}$,  $\Delta_{\rho}(C_6)=\{4\}$, $\Delta_{\rho}(C_7)=\{1,5\}$, $\Delta_{\rho}(C_8)=\{1,6\}$, $\Delta_{\rho}(C_9)=\{1,7\}$,
$\Delta_{\rho}(C_{10})=\{2, 8\}$,  $\Delta_{\rho}(C_{11})=\{1,9\}$, $\Delta_{\rho}(C_{12})=\{1,10\}$.
\end{corollary}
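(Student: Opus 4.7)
The plan is to invoke Theorem~\ref{3.14}, which gives $\Delta_{\rho}(C_n) = \Delta_{\rho}^*(C_n)$ for every $n \ge 3$, and then compute $\Delta_{\rho}^*(C_n)$ by enumeration. Combining Lemmas~\ref{3.2}.2 and~\ref{3.2}.3, $\Delta_{\rho}^*(C_n)$ equals the set of values $\min \Delta(G_0)$ as $G_0$ ranges over the nonempty subsets of the units $\{m g : \gcd(m,n)=1\}$ satisfying $G_0 = -G_0$, since the only atoms of length $\mathsf D(C_n) = n$ are the sequences $(m g)^n$ with $m$ coprime to $n$. Each such $G_0$ is a union of pairs $\{m g,(n-m)g\}$, and for any $g' \in G_0$ Lemma~\ref{3.4}.3 yields $\min\Delta(G_0) = \gcd\{\|V\|_{g'}-1 : V \in \mathcal A(G_0)\}$.

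For a single-pair $G_0 = \{m g,-m g\}$, the automorphism of $C_n$ sending $m g$ to $g$ turns it into $\{g,-g\}$, whose atoms are $g(-g), g^n, (-g)^n$ of $g$-norms $1, 1, n-1$; hence $\min\Delta(G_0) = n - 2$. When $\varphi(n) = 2$, that is $n \in \{4,6\}$, this is the only valid $G_0$, and $\Delta_{\rho}^*(C_n) = \{n-2\}$. For $n \in \{5,7,8,9,11,12\}$ I exhibit for a chosen multi-pair $G_0$ an atom $V$ with $\|V\|_g = 2$: take $V = (2g)^n$ for $n \in \{5, 7, 9, 11\}$ (working in any $G_0 \supset \{2g, -2g\}$), $V = (3g)^3(7g)$ for $n = 8$ (in $G_0 = \{g,3g,5g,7g\}$), and $V = (7g)^3 g^3$ for $n = 12$ (in any $G_0 \supset \{g, 7g\}$). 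Minimality in each case follows from a short subsum check, so $\min\Delta(G_0) = 1$, and combined with the single-pair contribution this yields $\Delta_{\rho}^*(C_n) = \{1, n-2\}$.

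The main obstacle is $n = 10$. Here the single-pair $G_0$'s give $8$, and for the only multi-pair $G_0 = \{g,3g,7g,9g\}$ the atom $(3g)^{10}$ of $g$-norm $3$ shows $\min\Delta(G_0) \t 2$. To conclude $\min\Delta(G_0) = 2$ I verify that no atom $V \in \mathcal A(\{g,3g,7g,9g\})$ has $g$-norm $2$, equivalently that the equation $v_1 + 3v_3 + 7v_7 + 9v_9 = 20$ admits no minimal zero-sum solution. Minimality forbids $V$ from containing both $g$ and $9g$ (since $g \cdot 9g$ is a norm-$1$ atom) and likewise both $3g$ and $7g$; thus the support of $V$ lies in one of $\{m g\}$, $\{g, 3g\}$, $\{g, 7g\}$, $\{3g, 9g\}$, or $\{7g, 9g\}$. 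The equations $3b + 9d = 20$ and $7c + 9d = 20$ have no nonnegative integer solutions, eliminating the last two supports; the mono-support cases either have no integer solution (for $m \in \{3,7,9\}$) or contain a proper $g^{10}$ subsum; and a short case check on the remaining solutions of $a + 3b = 20$, respectively $a + 7c = 20$, shows that each one contains one of the norm-$1$ subatoms $g^{10}$, $g^7(3g)$, $g^4(3g)^2$, $g(3g)^3$, or $g^3(7g)$. Hence $\min\Delta(\{g,3g,7g,9g\}) = 2$ and $\Delta_{\rho}^*(C_{10}) = \{2, 8\}$, completing the proof.
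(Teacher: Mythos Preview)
Your overall strategy matches the paper's: invoke Theorem~\ref{3.14} to reduce to computing $\Delta_{\rho}^*(C_n)$, then use Lemmas~\ref{3.2} and~\ref{3.4}.3 to enumerate $\min\Delta(G_0)$ over the admissible $G_0$. However, there are two genuine gaps in the execution.

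\textbf{First gap (the reverse inclusion for $n\in\{7,9,11\}$).} You only exhibit \emph{one} multi-pair $G_0$ with $\min\Delta(G_0)=1$, which gives $\{1,n-2\}\subset\Delta_{\rho}^*(C_n)$, and then assert equality. But $\Delta_{\rho}^*(C_n)$ collects $\min\Delta(G_0)$ over \emph{all} admissible $G_0$, so you must also show that no $G_0$ produces a value strictly between $1$ and $n-2$. For $n\in\{5,8,12\}$ there is only one multi-pair $G_0$, so you are fine there; but for $n=7,9$ there are three two-pair choices, and for $n=11$ there are ten, not all containing $\{2g,-2g\}$. The paper handles this by noting that any multi-pair $G_0$ contains some $\{g,-g,kg,-kg\}$, so $\min\Delta(G_0)$ divides $\min\Delta(\{g,-g,kg,-kg\})$, and then checking \emph{every} $k$ (up to the obvious symmetries) via Lemma~\ref{3.4}.3. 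You need the same exhaustive check.

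\textbf{Second gap (the $n=10$ case).} From the atom $(3g)^{10}$ you correctly obtain $\min\Delta(G_0)\mid 2$. To rule out $\min\Delta(G_0)=1$ via Lemma~\ref{3.4}.3 you must show that $\|V\|_g-1$ is even for every atom $V$, i.e.\ that no atom has \emph{even} $g$-norm. You only exclude $\|V\|_g=2$ (the equation $v_1+3v_3+7v_7+9v_9=20$), but norms $4,6,8$ are a~priori possible for atoms of length up to $10$, and each would also force $\min\Delta(G_0)=1$. The same support restriction you derived (no $\{g,9g\}$ or $\{3g,7g\}$ together) does kill these higher even norms after a short check, but that check is missing from your argument as written. (Alternatively, Lemma~\ref{3.16}.2 with $a=3$ gives the value $2$ directly from the continued fraction $10/3=[3,3]$.)
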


\begin{proof}
Let $G$ be a cyclic group of order $|G| =n \in [4,12]$. By  Theorem \ref{3.14}, we infer that $n-2 \in \Delta_{\rho}^* (G)=\Delta_{\rho} (G)$. By Theorem \ref{3.5}, we have $1 \in \Delta_{\rho} (G)$ if and only if $n \notin \{4,6,10\}$. Lemma \ref{3.2} shows that
\[
\Delta_{\rho}^* (G) = \{\min \Delta (G_0) \mid G_0=-G_0 \ \text{and} \ \ord (g)=n \ \text{for every} \ g \in G_0 \} \,.
\]
Now we use Lemma \ref{3.4}.3.
If $n \in \{4,6\}$, then for some $g \in G$ with $\ord (g)=n$ we get
\[
\Delta_{\rho}^* (G) = \{ \min \Delta (\{g,-g\}) = \{n-2\} \,.
\]
If $n=10$, then for some $g \in G$ with $\ord (g)=n$ we get
\[
\begin{aligned}
\Delta_{\rho}^* (G) & = \{ \min \Delta (\{g,-g\}), \min \Delta (\{3g,-3g\}), \min \Delta (\{g,-g, 3g, -3g\}) \} \\
 & = \{2,8\} \,.
\end{aligned}
\]
Suppose that $n \in [4,12] \setminus \{4,6,10]$. Let $G_0 \subset G$ be a subset consisting of elements of order $n$ and with $G_0 =-G_0$. If $|G_0|=2$, then $\min \Delta (G_0)=n-2$. Suppose that $|G_0|>2$. Then there is some $g \in G_0$ and some $k \in \N$ with $\gcd (k,n)=1$ such that $\{g,-g, kg, -kg\} \subset G_0$. Then $\min \Delta (G_0)$ divides $\min \Delta ( \{g,-g, kg, -kg\})$ and, by going through all cases and using Lemma \ref{3.4}.3,  we obtain that $\min \Delta ( \{g,-g, kg, -kg\}) =1$.  Thus the assertion follows.
\end{proof}

In the next lemma we need some basics from the theory of continued fractions (see \cite{HK13a} for some background; in particular, we use Theorems 2.1.3 and 2.1.7 of \cite{HK13a}).

\medskip
\begin{lemma}\label{3.16}
Let $G$ be a cyclic group with order  $n>3$, $g \in G$ with $\ord (g)=n$,  and $a \in [2,n-1]$ with $\gcd(a,n)=1$. Let $[a_0,\ldots, a_m]$ be the continued fraction expansion of $n/a$ with odd length (i.e. $m$ is even).
\begin{enumerate}
\item $\min \Delta(\{g, ag\})=\gcd(a_1, a_3,\ldots, a_{m-1})<n-2$ and $\min \Delta (\{g,-g,  ag, -ag\})\in \Delta_{\rho}^*(G)$.

\smallskip
\item If $a<n/2$, then $\min \Delta(\{g, ag, -ag, -g\})=\gcd( a_0-1, a_1, \ldots, a_{m-1}, a_m-1)$. Note that this also holds for the continued fraction expansion of $n/a$ with even length and hence this holds for the regular continued fraction expansion of $n/a$ (i.e. $a_m>1$).
\end{enumerate}
\end{lemma}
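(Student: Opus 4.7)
The plan is to deduce both equalities from Lemma~\ref{3.4}.3: since $\langle G_0\rangle=\langle g\rangle$ and $\Delta(G_0)\ne\emptyset$ in both cases, one has $\min\Delta(G_0)=\gcd\{\|V\|_g-1\mid V\in\mathcal A(G_0)\}$, which reduces each statement to a gcd computation over atoms, with Lemma~\ref{3.2} providing the $\Delta_\rho^*$-membership in the second assertion of part~1.

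For part~1, every atom of $\mathcal B(\{g,ag\})$ has the form $V=g^i(ag)^j$, where $(i,j)$ is a componentwise minimal element of the set $\Lambda:=\{(i,j)\in\Z_{\ge 0}^2\setminus\{(0,0)\}:i+aj\equiv 0\pmod n\}$, and then $\|V\|_g-1=(i+aj)/n-1$. The geometric core of the argument is to show that these minimal lattice points trace a polygonal chain from $(n,0)$ to $(0,n)$ whose segments are read off from the Euclidean algorithm $n=a_0a+r_1,\ a=a_1r_1+r_2,\ \ldots,\ r_{m-1}=a_mr_m$ (with $r_m=1$): for each $t\in[0,m/2]$ the chain contains a segment of $a_{2t}$ edges with a common step vector, along which $k=(i+aj)/n$ increases by $a_{2t-1}$ per edge (using the convention $a_{-1}=0$). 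The resulting multiset $\{k-1\mid V\in\mathcal A(\{g,ag\})\}$ thus decomposes into arithmetic progressions of common differences $a_1,a_3,\ldots,a_{m-1}$, whose gcd is $\gcd(a_1,a_3,\ldots,a_{m-1})$. The strict inequality $<n-2$ follows from $m\ge 2$ together with the identity $a_1r_1+r_2=a$ with $r_1,r_2\ge 1$, forcing $a_1\le a-1$; this gives $<n-2$ whenever $a\le n-2$, the residual case $a=n-1$ reducing to the classical $\{g,-g\}$ situation. For the membership assertion, $G_0=\{g,-g,ag,-ag\}$ satisfies $G_0=-G_0$, every element has order $n$ since $\gcd(a,n)=1$, and for each $h\in G_0$ the atom $h^n\in\mathcal A(G_0)$ has length $\mathsf D(G)=n$; by Lemma~\ref{3.2}.2 the sequence $A=g^n(-g)^n(ag)^n(-ag)^n$ satisfies $\supp(A)=G_0$ and $\rho(\mathsf L(A))=\mathsf D(G)/2$, and Lemma~\ref{3.2}.3 then yields $\min\Delta(G_0)\in\Delta_\rho^*(G)$.

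For part~2, I would classify the atoms of $\mathcal B(\{g,-g,ag,-ag\})$ into four families and pool their $\|V\|_g-1$ contributions. The four trivial atoms $g^n,(-g)^n,(ag)^n,(-ag)^n$ contribute $0,\,n-2,\,a-1,\,n-a-1$; the length-two atoms $g(-g)$ and $(ag)(-ag)$ contribute $0$; the same-sign mixed atoms, via the identity $\|(-g)^i(-ag)^j\|_g=i+j-k$ for $k=(i+aj)/n$ combined with part~1 applied to $g$ and to its mirror, contribute $\gcd(a_1,a_3,\ldots,a_{m-1})$; and the cross atoms $g^i(-ag)^j$ and $(-g)^i(ag)^j$ correspond to minimal points of $\{(i,j)\in\Z_{\ge 0}^2\setminus\{(0,0)\}:i+(n-a)j\equiv 0\pmod n\}$. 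Since $a<n/2$ forces $a_0\ge 2$, the continued fraction of $n/(n-a)$ is $[1;a_0-1,a_1,\ldots,a_m]$, and applying part~1 to it (after adjusting the parity of the length) shows that the cross atoms contribute $\gcd(a_0-1,a_2,a_4,\ldots,a_m-1)$. Pooling all four families produces $\gcd(a_0-1,a_1,a_2,\ldots,a_{m-1},a_m-1)=\gcd(a_0-1,a_1,\ldots,a_{m-1},a_m-1)$, and the trivial and length-two contributions are automatically divisible by this. The parenthetical remark about the even-length continued fraction follows from $[a_0;\ldots,a_m]=[a_0;\ldots,a_m-1,1]$ together with $\gcd(\ldots,a_m-1,0)=\gcd(\ldots,a_m-1)$.

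The principal obstacle is the continued-fraction bookkeeping underlying the polygonal-chain description of atoms, which I would prove by induction on $m$ using the Euclidean reduction $(n,a)\mapsto(a,r_1)$ and the associated change of basis of $\Lambda$; the delicate points are the identification of the step vector in each segment and the tracking of how $k$-increments transform under this reduction. The enumeration of cross atoms in part~2 brings additional notational overhead but is dispatched by the same geometric argument applied to the dual continued fraction $n/(n-a)$.
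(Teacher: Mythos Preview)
Your approach is essentially the same as the paper's. For the second assertion of part~1 you argue exactly as the paper does: set $A=g^n(-g)^n(ag)^n(-ag)^n$, observe $\rho(\mathsf L(A))=\mathsf D(G)/2$, and invoke Lemma~\ref{3.2}. For part~2 the paper proceeds just as you do: Lemma~\ref{3.4}.3 reduces the question to $\gcd\{\|V\|_g-1\mid V\in\mathcal A(G_0)\}$, the atoms of $\mathcal B(\{g,-g,ag,-ag\})$ are observed to lie in $\mathcal A(\{g,ag\})\cup\mathcal A(\{g,-ag\})\cup\mathcal A(\{-g,ag\})\cup\mathcal A(\{-g,-ag\})$ (the length-two atoms $g(-g)$ and $(ag)(-ag)$ contribute~$0$), symmetry collapses this to $\gcd\bigl(\min\Delta(\{g,ag\}),\min\Delta(\{g,-ag\})\bigr)$, and the continued fraction of $n/(n-a)$ (which, since $a<n/2$, starts $[1,a_0-1,\ldots]$) yields the second term via part~1. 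The only substantive difference is in the \emph{first} assertion of part~1: the paper does not prove the formula $\min\Delta(\{g,ag\})=\gcd(a_1,a_3,\ldots,a_{m-1})$ at all, but simply cites \cite[Theorem~2.1]{Ch-Ch-Sm07b} and \cite[Theorem~1]{Ge90d}. Your polygonal-chain/Euclidean-algorithm sketch is precisely the content of those cited results, so you are re-deriving what the paper imports; this is fine, but be aware that the bookkeeping you flag as ``the principal obstacle'' is nontrivial and is handled in those references rather than in the paper itself.
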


\begin{proof}
1. For the first part, see \cite[Theorem 2.1]{Ch-Ch-Sm07b} or \cite[Theorem 1]{Ge90d}. For the second part, since $g^n$ and $(ag)^n$ are two atoms of length  $\mathsf D(G)$, we obtain $\rho \big(\mathsf L(g^n(-g)^n(ag)^n(-ag)^n) \big)=\mathsf D(G)/2$ which implies
$\min \Delta(\{g,-g,  ag, -ag\})\in \Delta_{\rho}^*(G)$ by Lemma \ref{3.2}.3.

2. Suppose that $a < n/2$. By Lemma \ref{3.4}.3, we have
\begin{align*}
&\min \Delta(\{g, ag, -ag, -g\})\\
=& \gcd\{\| V\|_g-1\mid V\in \mathcal A(\{g, ag, -ag, -g\}) \} \\
=& \gcd\{\| V\|_g-1\mid V\in \mathcal A(\{g, ag\})\cup \mathcal A(\{g, -ag\})\cup \mathcal A(\{-g, ag\})\cup \mathcal A(\{-g, -ag\})  \}\\
=& \gcd\{\| V\|_g-1\mid V\in \mathcal A(\{g, ag\})\cup \mathcal A(\{g, -ag\})  \}\\
=& \gcd\{\min \Delta(\{g, ag\}), \min \Delta(\{g, -ag\})\}\,.
\end{align*}
Since the continued fraction of $\frac{n}{n-a}$ with odd length is
\begin{equation*}
\left\{
\begin{aligned}
&[1,a_0-1, a_1,\ldots, a_m-1, 1]   \text{ if $a_m>1$}, \\
&[1,a_0-1, a_1,\ldots, a_{m-1}+1]  \text{ if $a_m=1$},
\end{aligned}
\right.
\end{equation*}
1. implies that $\min \Delta(\{g, ag\})=\gcd(a_1, a_3,\ldots, a_{m-1})$ and \begin{equation*}
\min \Delta(\{g, -ag\})=\left\{
\begin{aligned}
&\gcd(a_0-1, a_2, a_4, \ldots, a_m-1)   \text{ if $a_m>1$}, \\
&\gcd(a_0-1, a_2, a_4, \ldots, a_{m-2})    \text{ if $a_m=1$}.
\end{aligned}
\right.
\end{equation*}
Therefore, we obtain
\[
\min \Delta(\{g, ag, -ag, -g\})=\gcd(\min \Delta(\{g, -ag\}), \min \Delta(\{g, ag\}) )=\gcd( a_0-1, a_1, \ldots, a_{m-1}, a_m-1) \,.
\]
\end{proof}

\medskip
\begin{theorem} \label{3.17}
Let $H$ be a transfer Krull monoid over a finite cyclic group $G$  of order $n \ge 3$.
Then  the following statements are equivalent{\rm \,:}
\begin{enumerate}
\item[(a)] $\Delta_{\rho}^* (H)\setminus \{1, n-2\}\neq \emptyset$.

\item[(b)] There is an $a  \in [2, \lfloor n/2\rfloor]$ with  $\gcd (n, a)=1$ such that $\gcd(a_0-1, a_1, \ldots, a_{m-1}, a_m-1)>1$, where $[a_0,a_1,\ldots,a_m]$ is the regular continued fraction expansion of $n/a$ (i.e. $a_m>1)$.
\end{enumerate}
\end{theorem}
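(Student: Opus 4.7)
The plan is to first reduce to $\mathcal B(G)$ via \eqref{eq:basic5} and then exploit the very restrictive description of $\Delta_{\rho}^*(G)$ given by Lemma \ref{3.2}. Fixing a generator $g_0$ of $G$, the atoms of $\mathcal B(G)$ of maximal length $\mathsf D(G) = n$ are exactly the sequences $(ag_0)^n$ with $\gcd(a,n) = 1$, so Lemma \ref{3.2} yields
\[
\Delta_{\rho}^*(G) = \bigl\{\min \Delta(G_0) \,\bigm|\, G_0 \subset G \setminus \{0\},\ G_0 = -G_0,\ \text{every element of $G_0$ has order $n$}\bigr\}.
\]
A preliminary observation I would record is that if $|G_0| = 2$, so $G_0 = \{g, -g\}$ for some generator $g$, then Lemma \ref{3.4}.3 gives $\min \Delta(G_0) = n-2$. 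Hence every element of $\Delta_{\rho}^*(G) \setminus \{1, n-2\}$ must be realised by a symmetric $G_0$ of size at least $4$, and such a set always contains a quadruple $\{g, -g, ag, -ag\}$ with $g$ a generator, $a \in [2, n-2]$, and $\gcd(a,n) = 1$.

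For the implication (b) $\Rightarrow$ (a) I would take an $a \in [2, \lfloor n/2 \rfloor]$ as in (b) with $d := \gcd(a_0-1, a_1, \ldots, a_{m-1}, a_m-1) > 1$, and set $G_0 = \{g, -g, ag, -ag\}$ for any generator $g$. Lemma \ref{3.16}.2 identifies $d$ with $\min \Delta(G_0)$, which places $d$ into $\Delta_{\rho}^*(G)$. To see that $d$ cannot equal $n-2$, I would use that the divisor-closed inclusion $\mathcal B(\{g, ag\}) \subset \mathcal B(G_0)$ combined with Lemma \ref{2.2} forces $d$ to divide $\min \Delta(\{g, ag\})$, and by Lemma \ref{3.16}.1 the latter is strictly smaller than $n-2$; hence $d \in \Delta_{\rho}^*(G) \setminus \{1, n-2\}$.

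For the converse (a) $\Rightarrow$ (b) I would start from $d \in \Delta_{\rho}^*(G) \setminus \{1, n-2\}$ realised by a symmetric $G_0$, conclude $|G_0| \ge 4$ from the above, and then pick a generator $g \in G_0$ together with some $g' \in G_0 \setminus \{g, -g\}$. Writing $g' = ag$ with $\gcd(a,n) = 1$ and exchanging $g'$ for $-g' \in G_0$ if necessary, I can arrange $a \in [2, \lfloor n/2 \rfloor]$; note that the coprimality forces $a \ne n/2$ whenever $n \ge 3$. The quadruple $\{g, -g, ag, -ag\}$ then sits inside $G_0$, so by Lemma \ref{2.2} and Lemma \ref{3.16}.2 the distance $d$ divides $\gcd(a_0-1, a_1, \ldots, a_m-1)$; since $d > 1$, this gcd is also $> 1$, which is exactly (b). I expect the only delicate point to be this symmetry-reduction producing $a \in [2, \lfloor n/2 \rfloor]$ from a generic pair in $G_0$; once that is settled, the statement drops out of an essentially formal combination of Lemmas \ref{3.2}, \ref{3.4}.3, \ref{3.16}, and the divisor-closed submonoid principle from Lemma \ref{2.2}.
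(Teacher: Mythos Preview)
Your proposal is correct and follows essentially the same route as the paper's proof: both reduce to $\mathcal B(G)$, identify $\Delta_{\rho}^*(G)$ via Lemma~\ref{3.2} as the set of $\min\Delta(G_0)$ over symmetric $G_0$ consisting of generators, observe that $|G_0|=2$ forces $\min\Delta(G_0)=n-2$, and then pass to a quadruple $\{g,-g,ag,-ag\}$ whose minimum distance is computed by Lemma~\ref{3.16}.2. The only cosmetic difference is that in (a)\,$\Rightarrow$\,(b) you use the divisibility $d\mid\min\Delta(\{g,-g,ag,-ag\})$ directly to force the gcd to exceed $1$, whereas the paper first argues that the quadruple itself realises an element of $\Delta_{\rho}^*(G)\setminus\{1,n-2\}$; and in (b)\,$\Rightarrow$\,(a) you bound $d<n-2$ via $d\mid\min\Delta(\{g,ag\})<n-2$ from Lemma~\ref{3.16}.1, which is slightly more explicit than the paper.
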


\begin{proof}
By \eqref{eq:basic5}, it is sufficient to prove the equivalence for $\mathcal B (G)$ instead of $H$.

\smallskip
(a)\, $\Rightarrow$ \, (b) \ Note that for any distinct atoms $U,V$ of length $n$, we have $\min \Delta \big(\supp( (-U)U(-V)V) \big)<n-2$ by Lemma \ref{3.16}.1.  Since $\Delta_{\rho}^* (H)\setminus \{1, n-2\}\neq \emptyset$, there must exist distinct atoms $U,V$ of length $n$ such that $\min \Delta \big(\supp((-U)U(-V)V) \big)\in \Delta_{\rho}^*(G)\setminus \{1,n-2\}$. Let $U=g^n$ and $V=(ag)^n$, where $g\in G$ and $a\in [2,n-2]$ with $\gcd(n,a)=1$. Then let $G_0=\{g, ag, -g, -ag\}$. If $a\ge \frac{n}{2}$,  then  $n-a\le \frac{n}{2}$. Thus we assume that $a\le \frac{n}{2}$. Therefore Lemma \ref{3.16}.2 implies that $\gcd(a_0-1, a_1, \ldots, a_{m-1}, a_m-1)>1$, where $[a_0,a_1,\ldots,a_m]$ is the regular continued fraction expansion of $n/a$.

(b)\, $\Rightarrow$ \, (a) \ We set $G_0=\{g,ag,-g,-ag\}$ where $g \in G$ with $\ord (g)=n$. Then $\min \Delta(G_0)<n-2$ and Lemma \ref{3.16}.2 implies that $\min \Delta(G_0)>1$. It follows that $\Delta_{\rho}^* (H)\setminus \{1, n-2\}\neq \emptyset$.
\end{proof}

\medskip
\begin{corollary} \label{3.18}
Let $G$ be a cyclic group of order  $n> 4$, and let $g \in G$ with $\ord (g)=n$.
\begin{enumerate}
\item If $n$ is even and $n-1$ is not a prime, then there is an even $d\in \Delta_{\rho}^*(G)\setminus\{1, n-2\}$.

\item If $n$ is even, $3\not|\  n$, and $n-3$ is not a prime, then there is an even $d\in \Delta_{\rho}^*(G)\setminus\{1, n-2\}$.

\item If $n$ is even and $n\equiv 2q\pmod {q^2}$ for some odd prime $q$ with $q^2+2q\le n$, then there is an even $d\in \Delta_{\rho}^*(G)\setminus\{1, n-2\}$.

\item If $n$ is even and $n\equiv q\pmod {2q+1}$ for some odd $q$ with $5q+2\le n$, then there is an even $d\in \Delta_{\rho}^*(G)\setminus\{1, n-2\}$.

\item If $n$ is even with $n \in [8, 10^9]$, then  $\Delta_{\rho}^*(G)=\{1, n-2\}$ if and only if \\
$n\in \{8,12,14,18,20,30,32,44,48,54,62,72,74,84,90,102,138,182,230,252,270,450,462,2844\}$.

\item If $n>5$ is odd and $n-1$ is a square, then there is an odd $d\in \Delta_{\rho}^*(G)\setminus\{1, n-2\}$.
\end{enumerate}

\end{corollary}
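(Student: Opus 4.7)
The plan is to reduce each of the six statements to Theorem~\ref{3.17} together with Lemma~\ref{3.16}.2: for each $n$ I produce an explicit $a\in[2,\lfloor n/2\rfloor]$ coprime to $n$ whose regular continued fraction expansion $n/a=[a_0;a_1,\ldots,a_m]$ satisfies $d:=\gcd(a_0-1,a_1,\ldots,a_{m-1},a_m-1)>1$ with the prescribed parity; Lemma~\ref{3.16} then guarantees $d\in \Delta_{\rho}^*(G)\setminus\{1,n-2\}$ (the inequality $d\le a-1<n-2$ is automatic). In every case I arrange for the continued fraction to have length at most three, so that $d$ is a gcd of at most three explicit quantities whose parities can be read off from those of $n$ and the given parameter.

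For part 1, factor $n-1=ab$ with $3\le a\le b$ (possible since $n-1$ is odd and composite); then $n/a=[b;a]$, and $d=\gcd(b-1,a-1)$ is even because $a$ and $b$ are both odd. For part 3, write $n=q^2k+2q$; the parity of $n$ forces $k$ even, hence $k\ge 2$, and $a:=qk+1$ is coprime to $n$ with $a\le n/2$. A direct division gives $n/a=[q;k,q]$, whence $d=\gcd(q-1,k)$ is even ($q-1$ is even because $q$ is odd). For part 4, $a:=2q+1$ is coprime to $n$ (since $n\equiv q\pmod a$) and satisfies $a\le n/2$ because $5q+2\le n$; the expansion $n/a=[m;2,q]$ with $m=(n-q)/a$ is regular as $q\ge 3$, and the parity of $n$ forces $m$ odd, giving $d=\gcd(m-1,2,q-1)$ even. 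For part 6, set $s:=\sqrt{n-1}$; then $s$ is even and $s\ge 4$, and $a:=s$ produces $n/a=[s;s]$ with the odd value $d=s-1>1$.

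Part 2 is the delicate case and I expect it to be the main obstacle. When $n\equiv 1\pmod 3$ the choice $a=3$ works: $n/3=[q;3]$ with $q=(n-1)/3$ odd, hence $d=\gcd(q-1,2)=2$. When $n\equiv 2\pmod 3$, however, $a=3$ produces the tail $[\ldots;1,2]$ and forces $d=1$, so the composite structure of $n-3$ must be exploited. Concretely, I will establish that $n-3$ admits a divisor $a\equiv 1\pmod 3$ with $4\le a\le n/2$; writing $a=3c+1$ and $q=(n-3)/a$ gives $n/a=[q;c,3]$, and since $a$ odd forces $c$ even while $n$ even forces $q$ odd, $d=\gcd(q-1,c,2)$ is even. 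Producing such a divisor requires a case analysis on the prime factorisation of $n-3$: if some prime factor $p$ of $n-3$ satisfies $p\equiv 1\pmod 3$, then $p\ge 7$ and $p\le (n-3)/2<n/2$ (as $n-3$ is composite), so $a:=p$ works; otherwise every prime factor of $n-3$ is $\equiv 2\pmod 3$, and $n-3\equiv 2\pmod 3$ forces an odd total multiplicity of at least $3$, so a divisor of the form $p^2$ or $pq$ that is $\equiv 1\pmod 3$ and does not exceed $n/2$ can be exhibited.

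Finally, part 5 is handled by a computer search: for every even $n\in[8,10^9]$, iterate over $a\in[2,\lfloor n/2\rfloor]$ coprime to $n$, compute the regular continued fraction of $n/a$, and test whether any choice yields $1<d<n-2$. The listed exceptional $n$ are precisely those for which the enumeration returns no such $a$.
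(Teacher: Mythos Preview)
Your proposal is correct and follows the paper's approach essentially line by line: in each part you pick the same $a$ as the paper does, compute the same short continued fraction, and read off the same gcd via Lemma~\ref{3.16}.2.

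The only place where you diverge is the case $n\equiv 2\pmod 3$ in part~2. There your planned case analysis on the prime factorisation of $n-3$ is unnecessary. The paper argues more directly: take \emph{any} nontrivial factorisation $n-3=m_1m_2$ with $1<m_1,m_2<n-3$. Since $3\nmid n$, neither $m_i$ is divisible by $3$; since $m_1m_2\equiv 2\pmod 3$, both cannot be $\equiv 2$, so one of them---call it $m_1$---satisfies $m_1\equiv 1\pmod 3$. Because $n-3$ is odd and coprime to $3$, the cofactor $m_2$ is at least $5$, whence automatically $7\le m_1\le (n-3)/5<n/2$. Then $n/m_1=[m_2,\lfloor m_1/3\rfloor,3]$ and $d=\gcd(m_2-1,\lfloor m_1/3\rfloor,2)=2$, with $m_2-1$ and $\lfloor m_1/3\rfloor$ both even. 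So the existence of a suitable divisor and the size bound $a<n/2$ are immediate from any single factorisation; you need not inspect whether the prime factors individually lie in a particular residue class.
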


\begin{proof}
Note that if $a \in [2, n-1]$ with $\gcd (a,n)=1$, then $\min \Delta ( \{g, ag, -g, -ag\} ) \in \Delta^*_{\rho} (G)$ and $\min \Delta ( \{g, ag, -g, -ag\} )<n-2$  by Lemma \ref{3.16}.1.

1. Let $n=mt+1$ be even with $m\in [2,n-2]$, and set $G_0=\{g, mg, -mg, -g\}$. Then $m,t$ are odd, $\gcd(m,n)=1$, and $m<n/2$. Since $[t,m]$ is the regular continued fraction of $n/m$, we have that $\min \Delta(G_0)=\gcd(m-1, t-1)$ is even  and hence $\min \Delta(G_0)\in \Delta_{\rho}^*(G)\setminus\{1, n-2\}$.

2. If $n\equiv 1\pmod 3$, then $n-1$ is not a prime and hence 1. implies the assertion. Suppose $n\equiv 2\pmod 3$ and let $n-3=m_1m_2$ with $1<m_1<n-3$. Then there exists $i\in [1,2]$, say $i=1$, such that $m_1\equiv 1\pmod 3$. Set $G_0=\{g, m_1g, -m_1g, -g\}$. Since $n$ is even, we obtain that $m_1,m_2$ are odd and hence $\lfloor \frac{m_1}{3}\rfloor$ is even.   Since $[m_2, \lfloor \frac{m_1}{3}\rfloor, 3]$ is the regular continued fraction of $n/m$, we have that $\min \Delta(G_0)=\gcd(m_2-1, \lfloor \frac{m_1}{3}\rfloor, 2)=2$  by Lemma \ref{3.16}.1 and hence $\min \Delta(G_0)\in \Delta_{\rho}^*(G)\setminus\{1, n-2\}$.

3. Let $n=q^2t+2q$ be even with $m=qt+1$, and set $G_0=\{g, mg, -mg, -g\}$. Then $n=qm+q$ and $t\ge 1$ is even.
Since $[q,t,q]$ is the regular continued fraction of $n/m$, we have that $\min \Delta(G_0)=\gcd (q-1, t, q-1)$ is even by Lemma \ref{3.16}.1 and hence $\min \Delta(G_0)\in \Delta_{\rho}^*(G)\setminus\{1, n-2\}$.

4. Let $n=(2q+1)t+q$ be even with $t$  odd, and set $G_0=\{g, (2q+1)g, -(2q+1)g, -g\}$. Then $\gcd(2q+1,n)=1$ and $5q+2\le n$ implies that $2q+1<n/2$. Since $[t,2,q]$ is the regular continued fraction of $n/(2q+1)$, we have that $\min \Delta(G_0)=\gcd (t-1, 2, q-1)=2$  by Lemma \ref{3.16}.1 and hence $\min \Delta(G_0)\in \Delta_{\rho}^*(G)\setminus\{1, n-2\}$.

5. This was done by a computer program.

6. Let  $n=m^2+1$ be odd, and set $G_0=\{g, mg, -mg, -g\}$. Then $m$ is even.   Since $[m, m]$ is the regular continued fraction of $n/m$, we have that $\min \Delta(G_0)=\gcd(m-1, m-1)=m-1>1$ is odd by Lemma \ref{3.16}.1 and hence $\min \Delta(G_0)\in \Delta_{\rho}^*(G)\setminus\{1, n-2\}$.
 \end{proof}

\smallskip
Next we discuss an application of Theorem \ref{3.17} to the so-called
Characterization Problem which is in the center of all arithmetical investigations of transfer Krull monoids. It asks whether two finite abelian groups $G$ with $\mathsf D (G) \ge 4$ and $G'$, whose systems of sets of lengths $\mathcal L (G)$ and $\mathcal L (G')$ coincide, have to be isomorphic (for an overview on this topic we refer \cite[Section 6]{Ge16c}). It is well-known that for every $n \ge 4$, the systems $\mathcal L (C_n)$ and $\mathcal L (C_2^{n-1})$ are distinct and that $\mathcal L (C_2^{n-1}) \not\subset \mathcal L (C_n)$ (\cite[Theorem 3.5]{Ge-Sc-Zh17b}).
If $n \in [4,5]$, then $\mathcal L (C_n) \subset \mathcal L (C_2^{n-1})$ (\cite[Section 4]{Ge-Sc-Zh17b}), but for $n \ge 6$ there is no information available so far. The results of the present section yield the following corollary.

\medskip
\begin{corollary} \label{3.19}
Let $G$ be a cyclic group of order $n \ge 6$. If the equivalent statements in Theorem \ref{3.17} hold, then $\mathcal L (C_n) \not\subset \mathcal L (C_2^{n-1})$.
\end{corollary}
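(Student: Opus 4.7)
The plan is to deduce the non-inclusion from a mismatch between the $\Delta_{\rho}$-invariants of the two groups, exploiting that $\Delta_{\rho}$ is determined solely by the system of sets of lengths together with the elasticity. First I would record that $\mathsf D(C_n) = n = \mathsf D(C_2^{n-1})$, so by Theorem \ref{3.1}.1 both monoids $\mathcal B(C_n)$ and $\mathcal B(C_2^{n-1})$ have accepted elasticity equal to $n/2$.

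The key observation is the following transfer principle: if $\mathcal L(C_n) \subset \mathcal L(C_2^{n-1})$, then $\Delta_{\rho}(C_n) \subset \Delta_{\rho}(C_2^{n-1})$. Indeed, by Definition \ref{2.1}.2 and equation \eqref{eq:basic5}, any $d \in \Delta_{\rho}(C_n)$ is witnessed, for each $k \in \N$, by a set $L_k \in \mathcal L(C_n)$ which is an AAP with difference $d$, length at least $k$, and with $\rho(L_k) = n/2$. Under the assumed inclusion each such $L_k$ lies in $\mathcal L(C_2^{n-1})$, and since $\rho(C_2^{n-1}) = n/2$ as well, these same sets witness $d \in \Delta_{\rho}(C_2^{n-1})$.

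Next I would invoke Theorem \ref{3.13} with $r = n-1 \ge 5$ to conclude that $\Delta_{\rho}(C_2^{n-1}) = \{1, n-2\}$. On the other hand, assuming that the equivalent conditions of Theorem \ref{3.17} hold for $G = C_n$, there exists some $d \in \Delta_{\rho}^*(C_n) \setminus \{1, n-2\}$. By Lemma \ref{2.4}.1 (or equivalently by Corollary \ref{3.3}.1) this $d$ lies in $\Delta_{\rho}(C_n)$, so $\Delta_{\rho}(C_n) \not\subset \{1, n-2\}$. Combining this with the transfer principle of the previous paragraph yields a contradiction to $\mathcal L(C_n) \subset \mathcal L(C_2^{n-1})$, which completes the argument.

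There is no serious obstacle here; the whole proof is a short assembly of results already proved in the section. The only point that requires a moment of care is verifying that the witnessing sets for membership in $\Delta_{\rho}(C_n)$ transfer to witnesses for membership in $\Delta_{\rho}(C_2^{n-1})$ without any loss of elasticity, and this works precisely because the two groups share the same Davenport constant and hence the same accepted elasticity $n/2$.
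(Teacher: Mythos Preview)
Your proposal is correct and follows essentially the same route as the paper's proof: assume the inclusion, deduce $\Delta_{\rho}(C_n)\subset\Delta_{\rho}(C_2^{n-1})=\{1,n-2\}$ via Theorem \ref{3.13}, and contradict Theorem \ref{3.17}. Your version simply makes explicit the point the paper leaves implicit, namely that the implication $\mathcal L(C_n)\subset\mathcal L(C_2^{n-1})\Rightarrow\Delta_{\rho}(C_n)\subset\Delta_{\rho}(C_2^{n-1})$ requires $\rho(C_n)=\rho(C_2^{n-1})$, which holds because $\mathsf D(C_n)=n=\mathsf D(C_2^{n-1})$.
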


\noindent
{\it Comment.} Note that Corollary \ref{3.18} shows that the equivalent statements in Theorem \ref{3.17} hold true for infinitely many $n \in \N$.

\begin{proof}
Assume to the contrary that $\mathcal L (C_n) \subset \mathcal L (C_2^{n-1})$. Then $\Delta_{\rho} (C_n) \subset \Delta_{\rho} (C_2^{n-1})$. Since $\Delta_{\rho} (C_2^{n-1}) = \{1, n-2\}$ by Theorem \ref{3.13}, we obtain a contradiction to Theorem \ref{3.17}.
\end{proof}

\smallskip
We end this section with the following conjecture (note, if $G$ is cyclic of order three or isomorphic to $C_2 \oplus C_2$, then $\Delta_{\rho} (G)=\{1\}$).

\medskip
\begin{conjecture} \label{3.20}
Let $H$ be a transfer Krull monoid over a finite abelian group $G$ with $|G| > 4$. Then $\Delta_{\rho} (H) = \{1\}$ if and only if $G$ is neither cyclic nor an elementary $2$-group.
\end{conjecture}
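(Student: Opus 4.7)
The forward direction of the conjecture is immediate. If $G$ is cyclic of order $n > 4$, Theorem \ref{3.14} places $n - 2 \ge 3$ in $\Delta_{\rho}(H)$; if $G \cong C_2^r$ with $|G| > 4$, then $r \ge 3$ and Theorem \ref{3.13} gives $\Delta_{\rho}(H) = \{1, r-1\}$ with $r - 1 \ge 2$. In either case $\Delta_{\rho}(H) \ne \{1\}$.

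For the converse, assume $G$ is neither cyclic nor an elementary abelian $2$-group and $|G| > 4$. By \eqref{eq:basic5} we may replace $H$ by $\mathcal B(G)$. Theorem \ref{3.5} already yields $1 \in \Delta_{\rho}(G)$, so by Corollary \ref{3.3} it suffices to prove that
\[
\min \Delta \bigl( \supp((-U)U) \bigr) = 1 \quad \text{for every} \ U \in \mathcal A(G) \ \text{with} \ |U| = \mathsf D(G) \,.
\]
The plan is to split into cases. (a) If $\mathsf r(G) = 2$, apply Theorem \ref{3.7}. (b) If $G \cong C_2 \oplus C_2 \oplus C_{2n}$, apply Theorem \ref{3.9}. (c) If $\mathsf r(G) \ge 3$ and $G$ is a $p$-group, then $\exp(G) \ge 3$ since $G$ is not elementary $2$; moreover $\supp(U)$ generates $G$ by \cite[Proposition 5.1.4]{Ge-HK06a}, and every generating set of an abelian $p$-group contains a basis by \cite[Lemma A.7]{Ge-HK06a}, so the ``in particular'' clause of Lemma \ref{3.10} applies.

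Case (d), $\mathsf r(G) \ge 3$ with $G$ not a $p$-group and $G \not\cong C_2 \oplus C_2 \oplus C_{2n}$, is the substantive remaining case. Here I would adapt the automorphism trick from Theorem \ref{3.5}: fix a Sylow decomposition $G = G_p \oplus H$ with $\mathsf r(G_p) \ge 2$, select $g \in \supp(U)$ whose $G_p$-component has order $\exp(G_p)$, extend to a basis $(f_g, e_2, \ldots, e_{\mathsf r(G_p)})$ of $G_p$, and use the two isomorphisms $\phi, \psi$ of $G$ defined as in Case 1.1 of the proof of Theorem \ref{3.5}. The set $G_0 := \supp\bigl( (-U)U \cdot \phi((-U)U) \cdot \psi((-U)U) \bigr)$ should then contain atoms of three consecutive lengths, so Lemma \ref{3.4}.1 forces $\min \Delta(G_0) = 1$, while Lemma \ref{3.2} guarantees $G_0$ still witnesses maximal elasticity.

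The principal obstacle is that for every atom $U$ of length $\mathsf D(G)$ one must actually locate such a witness $g \in \supp(U)$ whose relevant Sylow projection has order $\exp(G_p)$. For groups of rank $\ge 3$ the inverse problem --- a structure theorem for minimal zero-sum sequences of length $\mathsf D(G)$ --- is wide open; it is not even known whether $\supp(U)$ must contain an element of order $\exp(G)$. A fully general argument will therefore likely require either a sharpened form of Lemma \ref{3.10} demanding only that $\supp(U)$ generate $G$ (rather than contain a basis), or a pigeonhole extraction of an appropriate independent subset from $\supp(U)$ using the length bound $|U| = \mathsf D(G) \ge \mathsf D^*(G) = 1 + \sum_{i=1}^{r}(n_i - 1)$. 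Overcoming this obstruction is presumably the reason the statement is posed as a conjecture rather than a theorem.
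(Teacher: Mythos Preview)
Your assessment is essentially in line with the paper: this statement is indeed a \emph{conjecture}, and the paper offers no proof of the reverse implication in general. The paper's own discussion after the conjecture matches your outline: the forward direction follows from Theorems~\ref{3.13} and~\ref{3.14}, and the converse is verified only for rank-two groups (Theorem~\ref{3.7}), for $C_2 \oplus C_2 \oplus C_{2n}$ (Theorem~\ref{3.9}), and for $C_{p^k}^r$ with $r \ge 2$ (Theorem~\ref{3.11}). The paper names precisely the obstruction you identify --- the open inverse problem for minimal zero-sum sequences of length $\mathsf D(G)$ --- and notes that whenever $\supp(U)$ happens to contain a basis, Lemma~\ref{3.10} finishes the job.

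Two corrections to your sketch. First, your case~(c) overreaches: for a non-homocyclic $p$-group a generating set need not contain a basis (take $G = C_p \oplus C_{p^2}$ with basis $(e_1,e_2)$ and generating set $\{e_1+e_2,\,e_2\}$; both elements have order $p^2$, so no subset is a basis). This is why Theorem~\ref{3.11} is stated only for $C_{p^k}^r$, and the paper does not claim the conjecture for arbitrary $p$-groups of rank $\ge 3$.

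Second, and more importantly, the strategy you propose for case~(d) would not give the needed conclusion even if its technical obstacle were removed. By Corollary~\ref{3.3}.2, showing $\max \Delta_\rho(G)=1$ requires $\min \Delta\bigl(\supp((-U)U)\bigr)=1$ for \emph{every} $U$ of length $\mathsf D(G)$. Enlarging to $G_0 = \supp\bigl((-U)U\,\phi((-U)U)\,\psi((-U)U)\bigr) \supset \supp((-U)U)$ only yields $\min \Delta(G_0) \le \min \Delta\bigl(\supp((-U)U)\bigr)$; proving $\min\Delta(G_0)=1$ is therefore weaker than what is required and merely reproves $1 \in \Delta_\rho^*(G)$, which Theorem~\ref{3.5} already delivers. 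The automorphism trick of Theorem~\ref{3.5} was designed for exactly that weaker goal and does not transfer to bounding $\max\Delta_\rho(G)$. A genuine attack on case~(d) must stay within $\supp((-U)U)$ itself, which is why the paper frames the problem in terms of Lemma~\ref{3.10} and the structure of $\supp(U)$.
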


We summarize what follows so far by the results of the present section. Clearly, one implication of Conjecture \ref{3.20} holds true. Indeed,
if $G$ is cyclic or an elementary $2$-group with $|G|>4$, then $\Delta_{\rho} (H) \ne \{1\}$ by Theorems \ref{3.13} and \ref{3.14}. Conversely, for groups of rank two, and for groups isomorphic either to $C_2 \oplus C_2 \oplus C_{2n}$ or to $C_{p^k}^r$, where $n, r  \ge 2$, $k \ge 1$, and $p$ is a  prime with $p^k\ge 3$,  the conjecture holds true by Theorems \ref{3.7}, \ref{3.9}, and \ref{3.11} (consequently,  the conjecture holds true for all groups $G$ with $|G| \in [5, 47]$). In view of our discussion (preceding Lemma \ref{3.2}) on the state of the art on the Davenport constant, Conjecture \ref{3.20} might seem to be quite bold, but it is consistent with all what we know on the Davenport constant so far. Indeed, let $U \in \mathcal A (G)$ with $|U|= \mathsf D (G)$. The goal is to show that $\min \Delta \big( \supp ( (-U)U) \big) = 1$. By \cite[Proposition 5.1.11]{Ge-HK06a}, $\supp (U)$ contains a generating set of $G$. If it contains a basis, then we are done by Lemma \ref{3.10}.
Suppose $G$ is as in \eqref{eq:basic6} with $\mathsf D (G) = \mathsf D^* (G)$, $\mathsf r (G)=r >1$, and $(e_1, \ldots, e_r)$ is a basis with $\ord (e_i)=n_i$ for all $i \in [1,r]$. Then
\[
U = e_1^{n_1-1} \cdot \ldots \cdot e_r^{n_r-1}(e_1+ \ldots + e_r)
\]
is the canonical example of a minimal zero-sum sequence of length $\mathsf D^* (G)$. Clearly, there are minimal zero-sum sequences of different form (as Lemma \ref{3.6} shows for $r=2$) but their support can only be greater than or equal to $\mathsf r (G)+1$  (recall that $\mathsf r (G) = \min \{|G_0| \mid G_0 \subset G \ \text{is a generating set} \}$ by \cite[Lemma A.6]{Ge-HK06a}). Furthermore,  for subsets  $G_0 \subset G_1$  of $G$, we have $\min \Delta (G_1) \le \min \Delta (G_0)$.  The combination of these two facts provides strong support for the above conjecture.

\medskip
\section{Weakly Krull monoids} \label{4}
\medskip

The main goal in this section is to study the set $\Delta_{\rho}(\cdot)$ for $v$-noetherian weakly Krull monoids and for their monoids of $v$-invertible $v$-ideals. Our main result is given by Theorem \ref{4.4}.

We start with the local case, namely with finitely primary monoids.
A monoid $H$ is said to be {\it finitely primary} if there are  $s, \alpha \in \N$ and a factorial monoid $F = F^{\times} \time \mathcal F ( \{p_1, \ldots, p_s\})$ such that $H \subset F$ with
\begin{equation} \label{eq:basic2}
H \setminus H^{\times} \subset p_1 \cdot \ldots \cdot p_s F \quad \text{and} \quad (p_1 \cdot \ldots \cdot p_s)^{\alpha}F \subset H \,.
\end{equation}
In this case $s$ is called the rank of $H$ and $\alpha$ is called an exponent of $H$. It is well-known (\cite[Theorems 2.9.2 and 3.1.5]{Ge-HK06a}) that $F$ is the complete integral closure of $H$, that
\begin{equation} \label{eq:basic3}
\text{$H$ has finite elasticity if and only if $s=1$} \,,
\end{equation}
and that
\begin{equation} \label{eq:basic4}
\text{$H/H^{\times}$ is finitely generated if and only if $s=1$ and $(F^{\times} \DP H^{\times}) < \infty$} \,.
\end{equation}

To provide some examples of finitely primary monoids, we first recall that
every numerical monoid $H \subsetneq (\N_0,+)$ is  finitely generated and finitely primary of rank one with accepted elasticity $\rho (H)>1$. Furthermore, if $R$ is a one-dimensional local Mori domain, $\widehat R$ its complete integral closure, and   $(R \DP \widehat R) \ne \{0\}$, then its multiplicative monoid of non-zero elements is finitely primary (\cite[Sections 2.9, 2.10,  and 3.1]{Ge-HK06a}). Note that a  finitely primary monoid $H$ with  $\rho (H) > 1$ is not a transfer Krull monoid by \cite[Theorem 5.5]{Ge-Sc-Zh17b}.

Our first lemma is known for numerical monoids (\cite[Theorem 2.1]{Ch-Ho-Mo06} and \cite[Proposition 2.9]{B-C-K-R06}).

\smallskip
\begin{lemma} \label{4.1}
Let $H \subset F=F^{\times}\times \mathcal F ( \{p\})$ be a finitely primary monoid of rank $1$ and exponent $\alpha$, and let $\mathsf v = \mathsf v_p \colon H \to \N_0$ denote the homomorphism onto the value semigroup of $H$. Suppose that $\{\mathsf v (a) \mid a \in \mathcal A (H) \} = \{n_1, \ldots, n_s\}$ with $1 \le n_1 < \ldots < n_s$.
Then $\mathsf v (H) \subset \N_0$ is a numerical monoid, and we have
\begin{enumerate}
\item  $\rho (H) = n_s/n_1$,  and if $F^{\times}/H^{\times}$ is a torsion group, then the elasticity is accepted.

\smallskip
\item  Let $d = \gcd  \{n_i - n_{i-1} \mid i \in [2,s]\}$. Then $d \mid \gcd  \Delta (H) $ and if $|F^{\times}/H^{\times}|=1$, then $d = \gcd  \Delta (H) $.
\end{enumerate}
\end{lemma}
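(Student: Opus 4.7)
My plan is to treat the lemma in three moves. First, I would observe that $\mathsf v(H)$ is in fact a numerical monoid: since $p^{\alpha}F \subset H$ forces $p^n \in H$ for every $n \ge \alpha$, we have $[\alpha, \infty) \cap \N_0 \subset \mathsf v(H)$, so $\N_0 \setminus \mathsf v(H)$ is finite. In particular $\gcd(n_1, \ldots, n_s)$ divides two consecutive integers $\alpha$ and $\alpha+1$ lying in $\mathsf v(H)$, whence $\gcd(n_1, \ldots, n_s) = 1$. Every $a \in H$ may be written as $a = \epsilon p^{\mathsf v(a)}$ with $\epsilon \in F^{\times}$, and any factorization $a = w_1 \cdot \ldots \cdot w_{\ell}$ into atoms satisfies $\mathsf v(a) = \sum_{j=1}^{\ell} \mathsf v(w_j) \in [\ell n_1, \ell n_s]$.

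For Part 1, applying the sandwich $\ell n_1 \le \mathsf v(a) \le k n_s$ to $\ell = \max \mathsf L(a)$ and $k = \min \mathsf L(a)$ gives $\rho(\mathsf L(a)) \le n_s/n_1$, hence $\rho(H) \le n_s/n_1$. Under the torsion hypothesis on $F^{\times}/H^{\times}$, I would fix atoms $u, v$ with $\mathsf v(u)=n_1$ and $\mathsf v(v)=n_s$, write $u = \epsilon_1 p^{n_1}$ and $v = \epsilon_s p^{n_s}$, and choose $N \in \N$ with $\epsilon_1^N, \epsilon_s^N \in H^{\times}$. Then $u^{N n_s}$ and $v^{N n_1}$ share the valuation $N n_1 n_s$, and their ratio $(\epsilon_1^N)^{n_s}/(\epsilon_s^N)^{n_1}$ lies in $H^{\times}$. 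Hence $u^{N n_s}$ is associate to $v^{N n_1}$, so the same element of $H$ has both $N n_1$ and $N n_s$ in its set of lengths, realizing $\rho(H) = n_s/n_1$.

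For Part 2, the divisibility $d \t \gcd \Delta(H)$ drops out of a valuation count: given $e \in \Delta(H)$ witnessed by $U_1 \cdot \ldots \cdot U_k = a = V_1 \cdot \ldots \cdot V_{k+e}$, set $c_i := |\{j \DP \mathsf v(V_j) = n_i\}| - |\{j \DP \mathsf v(U_j) = n_i\}|$, so that $\sum_i c_i = e$ and $\sum_i c_i n_i = 0$. Subtracting $n_1$ times the first equation from the second yields $\sum_i c_i (n_i - n_1) = -n_1 e$, hence $d \t n_1 e$; combined with $\gcd(d, n_1)=1$ (which follows from $d \t n_i - n_1$ for all $i$ and from $\gcd(n_1, \ldots, n_s) = 1$), this gives $d \t e$. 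When $F^{\times} = H^{\times}$, two elements of $H$ with equal valuation differ by a unit of $F$ lying in $H$, i.e.\ by an element of $F^{\times} \cap H = H^{\times}$; so $\mathsf v$ descends to an isomorphism $H/H^{\times} \ito \mathsf v(H)$ and $\Delta(H) = \Delta(\mathsf v(H))$. For each $i \in [2,s]$ the element $n_{i-1} n_i$ of $\mathsf v(H)$ admits factorizations of lengths $n_{i-1}$ and $n_i$, so $\gcd \Delta(\mathsf v(H))$ divides $n_i - n_{i-1}$, whence divides $d$, yielding equality.

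The delicate point in the whole argument is the interplay between $F^{\times}$ and $H^{\times}$. For the accepted elasticity, the torsion assumption is exactly what is needed to convert the equality $\mathsf v(u^{Nn_s}) = \mathsf v(v^{Nn_1})$ from the free monoid into an associate relation in $H$; and for the sharpness $d = \gcd \Delta(H)$, the hypothesis $F^{\times} = H^{\times}$ is what lets one identify $H$ with $\mathsf v(H)$ up to units and import the numerical-monoid computation. Everything else is straightforward linear algebra once $\gcd(n_1, \ldots, n_s) = 1$ has been observed.
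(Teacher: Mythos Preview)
Your argument for Part~2 and for the accepted-elasticity clause of Part~1 is essentially the paper's: the valuation count giving $d\mid e$ via $\gcd(d,n_1)=1$, and the construction $u^{Nn_s}\sim v^{Nn_1}$ under the torsion assumption, are the same ideas, only reorganized (you pass through the isomorphism $H/H^{\times}\cong \mathsf v(H)$ where the paper stays in $H$).

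There is, however, a genuine gap in Part~1. The lemma asserts $\rho(H)=n_s/n_1$ \emph{unconditionally}; the torsion hypothesis is only needed for the elasticity to be accepted. You prove $\rho(H)\le n_s/n_1$ in general, but you establish the reverse inequality only under the torsion assumption. Without torsion your associate relation $u^{Nn_s}\sim v^{Nn_1}$ is not available, and nothing in your write-up shows that $n_s/n_1$ is even approached as a supremum.

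The missing ingredient is the absorbing property $p^{\alpha}F\subset H$ from the definition of a finitely primary monoid. Fix atoms $u_1=\epsilon_1p^{n_1}$ and $u_2=\epsilon_2p^{n_s}$ and any $m$ with $mn_1n_s\ge\alpha$. Then for every $k>m$ one has
\[
u_2^{kn_1}=\bigl(\epsilon_2^{kn_1}\epsilon_1^{-(k-m)n_s}p^{\,mn_1n_s}\bigr)\cdot u_1^{(k-m)n_s},
\]
and the first factor lies in $H$ simply because its $p$-valuation is at least $\alpha$, regardless of any torsion condition on the units. This gives $\max\mathsf L(u_2^{kn_1})\ge (k-m)n_s$ while $\min\mathsf L(u_2^{kn_1})\le kn_1$, so $\rho\bigl(\mathsf L(u_2^{kn_1})\bigr)\ge\dfrac{1+(k-m)n_s}{kn_1}\to n_s/n_1$ as $k\to\infty$. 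This is exactly the step the paper supplies; once you insert it, your proof is complete and matches the paper's.
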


\begin{proof}
If $a \in \mathcal A (H)$, then $p^{\alpha} F \subset H$ (see  \eqref{eq:basic2}) implies $\mathsf v (a) \le 2\alpha -1$, and hence $n_s \le 2 \alpha - 1$. Since $\N_{\ge \alpha} \subset \mathsf v (H)$, it follows that  $\mathsf v (H) \subset \N_0$ is a numerical monoid.

\smallskip
1. To show that $\rho (H) \le n_s/n_1$, let $a \in H$ be given and suppose that $a=u_1 \cdot \ldots \cdot u_k= v_1 \cdot \ldots \cdot v_{\ell}$ where $k,\ell \in \N$ and $u_1, \ldots , u_k,v_1, \ldots, v_{\ell} \in \mathcal A (H)$. Then
\[
\ell n_1 \le \sum_{i=1}^{\ell} \mathsf v (v_i) = \mathsf v (a) = \sum_{i=1}^k \mathsf v (u_i) \le k n_s \,,
\]
whence $\ell /k \le n_s/n_1$ and thus $\rho ( \mathsf L (a)) \le n_s/n_1$.

To show that $\rho (H) = n_s/n_1$, let $u_1=\epsilon_1p^{n_1}, u_2=\epsilon_2 p^{n_s} \in \mathcal A (H)$ with $\epsilon_1, \epsilon_2 \in F^{\times}$, and let $s \in \N_0$ such that $sn_1n_s \ge \alpha$. Then for every $k > s$ we have
\[
u_2^{k n_1} = \epsilon_2^{kn_1}p^{kn_1n_s} = \big( \epsilon_2^{kn_1} \epsilon_1^{-(k-s)n_s} p^{sn_1n_s}\big)  \big(\epsilon_1p^{n_1}\big)^{(k-s)n_s} = \big( \epsilon_2^{kn_1} \epsilon_1^{-(k-s)n_s} p^{sn_1n_s}\big) u_1^{(k-s)n_s} \,.
\]
Thus
\[
\rho (\mathsf L (u_2^{kn_1})) = \frac{\max \mathsf L (u_2^{kn_1})}{\min \mathsf L (u_2^{kn_1})} \ge \frac{1+(k-s)n_s}{kn_1}
\]
tends to $n_s/n_1$ as $k$ tends to infinity.

Now suppose that  $F^{\times}/H^{\times}$ is a torsion group, and let $u_1, u_2$ be as above. Then there is a $k_0 \in \N$ such that $(\epsilon_2^{n_1}\epsilon_1^{-n_s})^{k_0} \in H^{\times}$. Then the above calculation with $k=k_0$ and $s=0$ shows that $\rho (\mathsf L (u_2^{k_0n_1})) = n_s/n_1$.

\smallskip
2. For every $i \in [1,s]$ there are $t_i \in \N_0$ such that $n_i=n_1+t_id$. Since $p^{\alpha}F \subset H$, it follows that $\gcd (n_1, d)=1$.  Let $a \in H$ and consider two factorizations
\[
a = \prod_{i=1}^s \prod_{j=1}^{k_i} u_{i,j} = \prod_{i=1}^s \prod_{j=1}^{\ell_i} v_{i,j} \,,
\]
where all $u_{i,j}, v_{i,j}$ are (not necessarily distinct) atoms with $\mathsf v (u_{i,j}) = n_i=\mathsf v (v_{i,j})$ for all $i \in [1,s]$. Then
\[
\mathsf v (a) = \sum_{i=1}^s k_in_i = \sum_{i=1}^s \ell_i n_i = \sum_{i=1}^s \ell_i (n_1+t_id)
\]
whence
\[
n_1 \, \sum_{i=1}^s (\ell_i-k_i) = d \, \sum_{i=1}^s (k_i-\ell_i)t_i
\]
and this implies that $d$ divides $\sum_{i=1}^s (\ell_i-k_i)$. Thus $d$ divides $\gcd \Delta (H)=\min \Delta (H)$.

Now suppose that $F^{\times}=H^{\times}$. We show that $\gcd \Delta (H)$ divides $n_i - n_{i-1}$ for every $i \in [2,s]$ which implies that $\gcd \Delta (H)$ divides $d$ and equality follows. Let $i \in [2,s]$. Then there are atoms $u_{i-1}=\epsilon_{i-1}p^{n_{i-1}}$ and $u_i = \epsilon_i p^{n_i}$ with $\epsilon_{i-1}, \epsilon_i \in F^{\times}=H^{\times}$. Then
\[
u_i^{n_{i-1}} = \big( \epsilon_i p^{n_i} \big)^{n_{i-1}} = \big( \epsilon_{i-1}p^{n_{i-1}} \big)^{n_i} (\epsilon_i^{n_{i-1}} \epsilon_{i-1}^{-n_i} ) = u_{i-1}^{n_i} \eta \,,
\]
where $\eta = \epsilon_i^{n_{i-1}} \epsilon_{i-1}^{-n_i} \in H^{\times}$. Thus $\gcd \Delta (H)$ divides $n_i - n_{i-1}$.
\end{proof}

We continue with  simple examples showing that the elasticity need not be accepted if $F^{\times}/H^{\times}$ fails to be a torsion group, and that $d$ need not be equal to $\min \Delta (H)$.

\smallskip
\begin{example} \label{4.2}~

1. Let $H\subset F$ be a finitely primary monoid as in \eqref{4.1}, and  generated by $\{\epsilon_1p^2, \epsilon_2p^4, \epsilon p^3 \mid \epsilon \in F^{\times} \}$, where $\epsilon_1, \epsilon_2 \in F^{\times}$ with  $\ord(\epsilon_1) = \infty$ and $\ord(\epsilon_2) < \infty$. We assert that $\rho(H)$ is not accepted.

First, we observe that  $\mathcal A(H)=\{\epsilon_1p^2, \epsilon_2p^4, \epsilon p^3 \mid \epsilon \in F^{\times} \}$. Thus Lemma \ref{4.1}.1 implies that $\rho (H)=2$.  For every $b\in H$, we have $\mathsf v ( b ) \le 4\min\mathsf L(b)$ and $\mathsf v (b) \ge 2\max\mathsf L(b)$ which infer that $\rho( \mathsf L(b)) \le 2$. Assume to the contrary that  $\rho( \mathsf L (b))=2$. Then $\mathsf v ( b) =4\min\mathsf L(b)=2\max\mathsf L(b)$ which implies that
$b=(\epsilon_2p^4)^{\min \mathsf L(b)}=(\epsilon_1p^2)^{\max \mathsf L(b)}$. It follows that $\epsilon_2^{\min \mathsf L(b)}=\epsilon_1^{2\min \mathsf L(b)}$, a contradiction to our assumption on $\ord(\epsilon_1)$ and $\ord(\epsilon_2)$. Therefore $\rho( \mathsf L(b))<2$ for all $b\in H$ whence $\rho (H)$ is not accepted.

\smallskip
2. Let $F^{\times} = \{\epsilon\}$ with $\epsilon^2=1$, and $H = \langle \epsilon p^3, p^5 \rangle \subset F=F^{\times} \time \mathcal F (\{p\})$. Then $\min \Delta (H)= 4 > 2 = d$, where  $d$ as in Lemma \ref{4.1}.2.
\end{example}

\medskip
\begin{lemma} \label{4.3}~

\begin{enumerate}
\item Let $H$ be a finitely primary monoid with accepted elasticity $\rho (H)>1$. Then $\Delta_{\rho}^* (H) = \Delta_{\rho} (H) = \Delta_1 (H) = \{\min \Delta (H)\}$.

\smallskip
\item Let $H=H_1 \times \ldots \times H_n$ where $n \in \N$ and $H_i$ is a  finitely primary  monoid with accepted elasticity and $\min \Delta (H_i)=d_i$ for all $i \in [1,n]$.
Suppose that   $\rho (H_1) =  \ldots = \rho (H_s)=\rho (H) > \rho (H_i)$   for all $i \in [s+1, n]$. Then $\min \Delta_{\rho} (H) =\min \Delta_{\rho}^* (H) = \gcd (d_1, \ldots, d_s)$, $\max \Delta_{\rho} (H)=\max \Delta_{\rho}^* (H)$, and
\[
\big\{ \, \gcd  \{d_i \mid i \in I\}  \ \mid  \emptyset \ne I \subset [1,s] \big\} =\Delta_{\rho}^* (H) \  \subset \ \Delta_{\rho} (H) \subset \big\{ d \in \N \mid d  \ \text{divides some} \ d' \in \Delta_{\rho}^* (H) \big\} \,.
\]
\end{enumerate}
\end{lemma}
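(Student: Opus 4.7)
The plan for Part 1 is to reduce to rank one and then invoke the Structure Theorem for sets of lengths. By \eqref{eq:basic3}, $\rho(H) < \infty$ forces $H$ to have rank one, so $H \subset F = F^\times \times \mathcal F(\{p\})$. Writing $\{\mathsf v_p(u) : u \in \mathcal A(H)\} = \{n_1 < \ldots < n_t\}$, Lemma~\ref{4.1}.1 gives $\rho(H) = n_t/n_1$, and $t \ge 2$ since $\rho(H) > 1$. Put $d := \min \Delta(H)$; Lemma~\ref{2.2}.1 yields $d = \gcd \Delta(H)$, so $\Delta(H) \subset d\N$. The Structure Theorem for sets of lengths in finitely primary rank-one monoids (see \cite[Section 4.3]{Ge-HK06a}) then presents every $L \in \mathcal L(H)$ as an AAP with difference $d$ and some uniform bound $M = M(H)$. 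The inclusion $\Delta_1(H) \subset \{d\}$ is immediate, since a sufficiently long AAP has a unique difference (recovered as the gcd of differences in its middle part).

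To show $d \in \Delta_\rho^*(H)$, pick $a \in H$ realizing $\rho(\mathsf L(a)) = \rho(H)$. By Lemma~\ref{2.2}.3, $\rho(\mathsf L(a^k)) = \rho(H)$ for every $k \in \N$, and the Structure Theorem gives $\mathsf L(a^k)$ as an AAP with difference $d$ whose middle part grows without bound. In particular $\Delta(\mathsf L(a^k)) \ni d$, while $\Delta_H([a]) \subset \Delta(H) \subset d\N$ prevents anything smaller, so $\min \Delta_H([a]) = d$ and hence $d \in \Delta_\rho^*(H)$. Combined with \eqref{eq:basic1}, this closes the chain $\Delta_\rho^*(H) = \Delta_\rho(H) = \Delta_1(H) = \{d\}$.

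For Part 2, Lemma~\ref{2.6}.2 already provides $\Delta' \subset \Delta_\rho(H)$ and $\Delta'' \subset \Delta_\rho^*(H)$; Part 1 applied to each $H_i$ with $i \in [1,s]$ gives $\Delta_\rho(H_i) = \Delta_\rho^*(H_i) = \{d_i\}$, so $\Delta' = \Delta''$ coincides with the asserted set of gcds. The reverse inclusion $\Delta_\rho^*(H) \subset \Delta'$ is the main technical step: for $a = (a_1, \ldots, a_n)$ with $\rho(\mathsf L(a)) = \rho(H)$, the weighted-average identity $\rho(\mathsf L(a)) = (\sum \max \mathsf L(a_i))/(\sum \min \mathsf L(a_i))$ together with $\rho(H_i) < \rho(H)$ for $i > s$ forces $a_i \in H_i^\times$ for such $i$, and forces $\rho(\mathsf L(a_i)) = \rho(H)$ for every $i$ in the non-empty set $I := \{i \in [1,s] : a_i \notin H_i^\times\}$. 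Then $\mathsf L(a^k) = \sum_{i \in I} \mathsf L(a_i^k)$, and Part 1 presents each summand as an AAP with difference $d_i$ and uniform bound, so for $k$ large the sumset is an AAP with difference $\gcd\{d_i : i \in I\}$, giving $\min \Delta_H([a]) = \gcd\{d_i : i \in I\} \in \Delta'$.

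The $\min$ and $\max$ statements then follow cleanly: taking $I = [1,s]$ yields $\min \Delta_\rho^*(H) = \gcd(d_1, \ldots, d_s)$, and Lemma~\ref{2.4}.1 gives $\min \Delta_\rho(H) = \min \Delta_\rho^*(H)$; the inclusion $\Delta_\rho(H) \subset \{d \in \N : d \mid d' \in \Delta_\rho^*(H)\}$ (implying $\max \Delta_\rho(H) = \max \Delta_\rho^*(H)$) comes from running the same sumset analysis on the witnessing elements $a_k$ from the definition of $\Delta_\rho(H)$. The main obstacle I anticipate lies in ensuring that the bound $M$ produced by the Structure Theorem is uniform across the elements $a \in H_i$ under consideration, so that the sumset of AAPs stays an AAP with the claimed difference; this uniformity is available in the finitely primary setting but must be invoked carefully, together with the local-tameness style estimates underlying \cite[Proposition 4.3.4]{Ge-HK06a}.
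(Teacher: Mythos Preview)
Your proposal is correct and broadly parallels the paper's argument, but the paper streamlines two steps that you work out by hand via AAPs.

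For Part~1, the paper exploits the fact that a finitely primary monoid is \emph{primary}: every non-unit has $\LK a \RK = H$. Hence $\min \Delta(\LK a \RK) = \min \Delta(H)$ trivially, and $\Delta_\rho^*(H) = \{\min \Delta(H)\}$ is immediate from Lemma~\ref{2.2}.2 without ever touching the Structure Theorem. The only appeal to \cite[Theorem~4.3.6]{Ge-HK06a} is then to get $\Delta_1(H) = \{\min \Delta(H)\}$, exactly as you do. Your route via AAPs is correct but longer; the one-line observation $\LK a \RK = H$ is worth internalizing.

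For Part~2, the paper likewise computes $\Delta_\rho^*(H)$ directly. After observing (as you do) that $\rho(\mathsf L(a)) = \rho(H)$ forces $a_i \in H_i^\times$ for $i > s$, it notes that $\LK a \RK = \prod_{i \in I} H_i$ where $I = \{i \in [1,s] : a_i \notin H_i^\times\}$, and then invokes \cite[Proposition~1.4.5]{Ge-HK06a} to get $\min \Delta\bigl(\prod_{i \in I} H_i\bigr) = \gcd \bigcup_{i \in I} \Delta(H_i) = \gcd\{d_i : i \in I\}$. This yields both inclusions for $\Delta_\rho^*(H)$ at once, without the sumset-of-AAPs analysis you carry out. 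Your argument via Lemma~\ref{2.6}.2 plus Part~1 for $\Delta'' \subset \Delta_\rho^*(H)$, and the explicit AAP sumset for the reverse, is valid but less economical.

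The final inclusion $\Delta_\rho(H) \subset \{d : d \mid d'\text{ for some }d' \in \Delta_\rho^*(H)\}$ is handled identically in both: pass to a subsequence of the witnessing elements so that a fixed subset $I \subset [1,s]$ of coordinates has unbounded length sets while the rest stay bounded, then read off $d \mid \gcd\{d_i : i \in I\}$. Your concern about uniformity of the AAP bound $M$ is exactly right and is resolved by \cite[Theorem~4.3.6]{Ge-HK06a}, which supplies a single $M$ depending only on $H_i$, not on the element; the paper implicitly uses this without further comment.
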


\begin{proof}
1. By Lemmas \ref{2.2} and \ref{2.4}, we have
\[
\{ \min \Delta ( \LK a \RK ) \mid a \in H \ \text{with} \ \rho (\mathsf L (a))=\rho (H) \} = \Delta_{\rho}^* (H) \subset \Delta_{\rho} (H) \subset \Delta_1 (H) \,.
\]
If $a \in H$ with $\rho (\mathsf L (a))=\rho (H) > 1$, then $a \in H \setminus H^{\times}$ and hence $\LK a \RK = H$. Thus it remains to show that $\Delta_1 (H) = \{\min \Delta (H) \}$, which follows from \cite[Theorem 4.3.6]{Ge-HK06a}.

\smallskip
2. Without restriction we may suppose that $H$ is reduced. Then also $H_1, \ldots, H_n$ are reduced.   We use Lemma \ref{2.6}. Note that $H_1, \ldots, H_n$ need not be finitely generated whence Lemma \ref{2.4}.3 cannot be applied to the present setting.

Let $a = a_1 \cdot \ldots \cdot a_n \in H$ with $a_i \in H_i$ for all $i \in [1,n]$. If $\rho ( \mathsf L (a)) = \rho (H)$, then $a_{s+1}=\ldots=a_n=1$ and
\[
\LK a \RK = \prod_{i \in [1,s], a_i \ne 1} H_i \,.
\]
For every $i \in [1,s]$, 1. implies that $\Delta_{\rho} (H_i)=\{d_i\}$. If $\emptyset \ne I \subset [1,s]$, then \cite[Proposition 1.4.5]{Ge-HK06a} implies that
\[
\gcd  \Delta \big( \prod_{i\in I} H_i \big)  = \gcd \, \bigcup_{i \in I} \Delta (H_i)  \,,
\]
and clearly
\[
\gcd \, \bigcup_{i \in I} \Delta (H_i)  = \gcd \, \{ \gcd \Delta (H_i) \mid i \in I \}  = \gcd  \{ d_i \mid i \in I \} \,.
\]
Thus we obtain that (the first equality follows from Lemma \ref{2.2}.2)
\[
\begin{aligned}
\Delta_{\rho}^* (H) & =  \Big\{ \gcd \Delta ( \LK a \RK ) \mid a \in H \ \text{with} \ \rho (\mathsf L (a))=\rho (H) \Big\} \\
 & = \Big\{\gcd \Delta (\prod_{i \in I} H_i) \mid \emptyset \ne I \subset [1,s] \Big\} \\
 & = \Big\{ \gcd  \{d_i \mid i \in I \} \mid \emptyset \ne I \subset [1,s] \Big\}  \,.
\end{aligned}
\]
Since $\Delta_{\rho} (H) = \Delta_{\rho} (H_1 \time \ldots \time H_s)$,  $\min \Delta (H_1 \time \ldots \time H_s) = \gcd (d_1, \ldots, d_s)$, and $\min \Delta_{\rho}^* (H) = \gcd (d_1, \ldots, d_s)$, it follows that $\min \Delta_{\rho} (H) =  \gcd (d_1, \ldots, d_s) $.

Lemma \ref{2.4}.1 implies that $\Delta_{\rho}^* (H)   \subset  \Delta_{\rho} (H)$, and it remains to show that $\Delta_{\rho} (H) \subset \big\{ d \in \N \mid d  \ \text{divides some} \ d' \in \Delta_{\rho}^* (H) \big\}$. If this holds, then we immediately get that $\max \Delta_{\rho} (H)=\max \Delta_{\rho}^* (H)$.
Now let $d \in \Delta_{\rho} (H)$ be given. We claim that $d$ divides some element from $\Delta_{\rho}^* (H)$.

For every $k \in \N$ there is some $a^{(k)} \in H$ such that $\mathsf L (a^{(k)})$ is an AAP with difference $d$, length at least $k$, and with $\rho \big( \mathsf L (a^{(k)}) \big) = \rho (H)$. Let $k \in \N$. Then $a^{(k)} = a_1^{(k)} \cdot \ldots \cdot a_s^{(k)}$ with $a_i^{(k)} \in H_i$ and $\rho \big( \mathsf L (a_i^{(k)}) \big) = \rho (H_i)=\rho (H)$ for all $i \in [1,s]$. Then there is a subsequence $b^{({\ell})} = a^{(k_{\ell})}$ of $a^{(k)}$, a nonempty subset $I \subset [1,s]$, say $I=[1,r]$, and a constant $M$ such that the following holds for every $k \in \N$.
\begin{itemize}
\item For every $i \in [1,r]$, $\mathsf L (b_i^{(k)})$ is an AAP with difference $d_i$, length at least $k$, and with $\rho \big( \mathsf L (B_i^{(k)}) \big) = \rho (H)$.

\item For every $i \in [r+1,s]$, $|\mathsf L (b_i^{(k)})| \le M$.
\end{itemize}
Thus $\mathsf L ( b_1^{(k)} \cdot \ldots \cdot b_r^{(k)}) = \mathsf L ( b_1^{(k)}) + \ldots + \mathsf L ( b_r^{(k)})$ is an AAP with difference $\gcd (d_1, \ldots, d_r) \in \Delta_{\rho}^* (H)$ and length growing with $k$. Since $\mathsf L (b^{(k)})$ is an AAP with difference $d$, it follows that $d$ divides $\gcd (d_1, \ldots, d_r)$.
\end{proof}

\medskip
For our discussion of weakly Krull monoids we put together some notation and gather their main properties. For any undefined notion we refer to \cite{HK98, Ge-HK06a}. In the remainder of this sections all monoids are commutative and cancellative and by a domain we always mean a commutative integral domain. If $R$ is a domain, then its semigroup $R^{\bullet} = R \setminus \{0\}$ of non-zero elements is a monoid.

Let $H$ be a monoid. Then $\mathsf q (H)$ denotes its quotient group,
\[
\widehat H = \{ x \in \mathsf q (H) \mid \text{there is a $c \in H$ such that} \ cx^n \in H \ \text{for all} \ n \in \N \} \subset \mathsf q (H) \,,
\]
its complete integral closure, and $(H \DP \widehat H) = \{ x \in \mathsf q (H) \mid x \widehat H \subset H\}$ the conductor of $H$. Furthermore, $H_{\red} = \{ a H^{\times} \mid a \in H \}$ is the associated reduced monoid of $H$ and $\mathfrak X (H)$ is the set of minimal non-empty prime $s$-ideals of $H$. Let $\mathcal I_v^* (H)$ denote the monoid of $v$-invertible $v$-ideals of $H$ (together with $v$-multiplication). Then $\mathcal F_v(H)^{\times} = \mathsf q \big( \mathcal I_v^* (H) \big)$ is the quotient group of fractional $v$-invertible $v$-ideals, and $\mathcal C_v (H) = \mathcal F_v(H)^{\times}/\{ xH \mid x \in \mathsf q (H)\}$ is the $v$-class group of $H$.

The monoid $H$ is said to be  {\it weakly Krull} (\cite[Corollary 22.5]{HK98}) if
\[
H = \bigcap_{\mathfrak p \in \mathfrak X (H)} H_{\mathfrak p} \quad \text{and} \quad \{\mathfrak p \in \mathfrak X (H) \mid a \in \mathfrak p \} \ \text{is finite for all} \ a \in H \,.
\]
If $H$ is $v$-noetherian, then $H$ is weakly Krull if and only if  $v$-$\max (H) = \mathfrak X (H)$ (\cite[Theorem 24.5]{HK98}). A domain $R$ is {\it weakly Krull} if $R^{\bullet}$ is a weakly Krull monoid. Weakly Krull domains were introduced by Anderson, Anderson, Mott, and Zafrullah (\cite{An-An-Za92b, An-Mo-Za92}), and weakly Krull monoids by Halter-Koch (\cite{HK95a}).
The monoid $H$ is Krull if and only if $H$ is weakly Krull and $H_{\mathfrak p}$ is a discrete valuation monoid for each $\mathfrak p \in \mathfrak X (H)$.

Every saturated submonoid $H$ of a monoid $D = \mathcal F (P) \times D_1 \ldots \times D_n$, where $P$ is a set of primes and $D_1, \ldots, D_n$ are primary monoids, is weakly Krull if the class group $\mathsf q (D)/\big( D^{\times}\mathsf q (H) \big)$ is a torsion group (\cite[Lemma 5.2]{Ge-Ka-Re15a}).
We mention a few key examples examples of $v$-noetherian weakly Krull monoids and domains and refer to \cite[Examples 5.7]{Ge-Ka-Re15a} for a detailed discussion. Suppose that $H$ is as in Theorem \ref{4.4}. Then, by the previous remark,  its monoid of $v$-invertible $v$-ideals $\mathcal I_v^* (H)$ is a weakly Krull monoid. Furthermore, all one-dimensional noetherian domains are $v$-noetherian weakly Krull.  If  $R$ is $v$-noetherian weakly Krull domain with non-zero conductor $(R \DP \widehat R)$ and $\mathfrak p \in \mathfrak X (R)$, then $R_{\mathfrak p}^{\bullet}$ is finitely primary, and thus the assumption made in Theorem \ref{4.4} holds.
Orders in algebraic number fields are one-dimensional noetherian and hence they are $v$-noetherian weakly Krull domains. If $R$ is an order, then its $v$-class group $\mathcal C_v (R)$ (which coincides with the Picard group) as well as the index of the unit groups $(\widehat R^{\times} \DP R^{\times})$ are finite and every class contains a minimal prime ideal $\mathfrak p \in \mathcal P$. Thus all assumptions made in Theorem \ref{4.4}.4 are satisfied. It was first proved by Halter-Koch (\cite[Corollary 4]{HK95b}) that the elasticity of orders in number fields is accepted whenever it is finite.

\medskip
\begin{theorem} \label{4.4}
Let $H$ be a $v$-noetherian weakly Krull monoid with  conductor
 $\emptyset \ne \mathfrak f = (H \DP \widehat H) \subsetneq H$ such that $H_{\mathfrak p}$ is finitely primary for each $\mathfrak p \in \mathfrak X (H)$. Let $\mathcal P^* = \{ \mathfrak p \in \mathfrak X(H) \mid \mathfrak p \supset \mathfrak f \}$, $\mathcal P = \mathfrak X (H) \setminus \mathcal P^*$, and let $\pi \colon \mathfrak X (\widehat H) \to \mathfrak X (H)$ be the natural map defined by $\pi ( \mathfrak P) = \mathfrak P \cap H$ for all $\mathfrak P \in \mathfrak X (\widehat H)$.
\begin{enumerate}
\item $\mathcal I_v^* (H)$ has finite elasticity if and only if $\pi$ is bijective.

\smallskip
\item  If $\pi$ is bijective and   $\widehat H_{\mathfrak p}^{\times} / H_{\mathfrak p}^{\times}$ are torsion groups for all $\mathfrak p \in \mathcal P^*$, then $\mathcal I_v^* (H)$ has accepted elasticity.

\smallskip
\item Suppose that  $\mathcal I_v^* (H)$ has accepted elasticity, and let $\mathfrak p_1, \ldots, \mathfrak p_s \in \mathcal P^*$ be the minimal prime ideals with $\rho \big(H_{\mathfrak p_i} \big) = \rho \big( \mathcal I_v^* (H) \big)$ for all $i \in [1,s]$, and set $d_i = \min \Delta (H_{\mathfrak p_i})$. Then
    \[
    \begin{aligned}
\big\{  \gcd  \{d_i \mid i \in I\}  \ \mid  \emptyset \ne I \subset [1,s] \big\} & =\Delta_{\rho}^* \big( \mathcal I_v^* (H) \big) \  \subset \ \Delta_{\rho} \big( \mathcal I_v^* (H) \big) \\ & \subset \big\{ d \in \N \mid d  \ \text{divides some} \ d' \in \Delta_{\rho}^* \big( \mathcal I_v^* (H) \big) \big\} \,.
    \end{aligned}
    \]

\smallskip
\item Let  $G_{\mathcal P} \subset \mathcal C_v (H)$ denote the set of classes containing a minimal prime ideal from $\mathcal P$. Suppose that $\pi$ is bijective, and that  $\mathcal C_v (H)$ and $\widehat H^{\times}/H^{\times}$ are both finite. Then $H$ has accepted elasticity and if $\rho (H)= \rho (G_{\mathcal P})$, then $\Delta_{\rho} (  G_{\mathcal P} ) \subset \Delta_{\rho} (H)$.
\end{enumerate}
\end{theorem}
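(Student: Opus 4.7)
The plan is to reduce the problem to the associated $T$-block monoid and then apply Lemma~\ref{2.4}. Under the finiteness assumptions of part~(4), the standard construction for $v$-noetherian weakly Krull monoids (as outlined in the introductory discussion of Section~\ref{4} and developed in \cite[\S3.4]{Ge-HK06a}) produces a weak transfer homomorphism
\[
\theta \colon H \longrightarrow \mathcal B(G_{\mathcal P}, T, \iota),
\]
where $T = \prod_{\mathfrak p \in \mathcal P^*} (H_{\mathfrak p})_{\red}$ and $\iota$ records the $v$-class of each prime divisor. Because a weak transfer homomorphism preserves factorisation lengths, this forces $\mathcal L(H) = \mathcal L(\mathcal B(G_{\mathcal P}, T, \iota))$, and in particular
\[
\rho(H) = \rho(\mathcal B(G_{\mathcal P}, T, \iota)) \quad \text{and} \quad \Delta_{\rho}(H) = \Delta_{\rho}(\mathcal B(G_{\mathcal P}, T, \iota)).
\]

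For accepted elasticity, I would argue as follows. Since $\pi$ is bijective, each local factor $H_{\mathfrak p}$ for $\mathfrak p \in \mathcal P^*$ is finitely primary of rank~$1$, and the finiteness of $\widehat H^{\times}/H^{\times}$ localises to the statement that $\widehat{H_{\mathfrak p}}^{\times}/H_{\mathfrak p}^{\times}$ is a torsion group. Lemma~\ref{4.1}.1 then supplies accepted elasticity for each $H_{\mathfrak p}$. Combined with the finiteness of $G_{\mathcal P}$ (a subset of the finite group $\mathcal C_v(H)$) and of $\mathcal P^*$ (implicit in $\mathfrak f \ne \emptyset$ and $v$-noetherianity), a routine block-monoid construction --- taking an element of $\mathcal B(G_{\mathcal P})$ or of some $H_{\mathfrak p}$ which realises the relevant elasticity and multiplying it with a suitable padding so as to satisfy the block condition --- yields an element of $\mathcal B(G_{\mathcal P}, T, \iota)$ attaining
\[
\max\bigl(\rho(G_{\mathcal P}),\, \max_{\mathfrak p \in \mathcal P^*} \rho(H_{\mathfrak p})\bigr) = \rho(\mathcal B(G_{\mathcal P}, T, \iota)).
\]
Pulling this element back through $\theta$ produces an element of $H$ witnessing accepted elasticity for $H$.

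For the inclusion $\Delta_{\rho}(G_{\mathcal P}) \subset \Delta_{\rho}(H)$, the key observation is that the natural embedding
\[
\mathcal B(G_{\mathcal P}) \hookrightarrow \mathcal B(G_{\mathcal P}, T, \iota), \qquad S \mapsto S \cdot 1_T,
\]
identifies $\mathcal B(G_{\mathcal P})$ with a divisor-closed submonoid of $\mathcal B(G_{\mathcal P}, T, \iota)$: a product $(S_1 t_1)(S_2 t_2)$ lies in the image exactly when $t_1 t_2 = 1_T$, and since $T$ is reduced this forces $t_1 = t_2 = 1_T$, so both factors already lie in $\mathcal B(G_{\mathcal P})$. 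The hypothesis $\rho(H) = \rho(G_{\mathcal P})$ translates, via the transfer, into $\rho(\mathcal B(G_{\mathcal P}, T, \iota)) = \rho(\mathcal B(G_{\mathcal P}))$, so Lemma~\ref{2.4}.2 applies directly to give
\[
\Delta_{\rho}(G_{\mathcal P}) = \Delta_{\rho}(\mathcal B(G_{\mathcal P})) \subset \Delta_{\rho}(\mathcal B(G_{\mathcal P}, T, \iota)) = \Delta_{\rho}(H).
\]

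The main obstacle I anticipate is the preliminary step: setting up the weak transfer homomorphism $\theta$ under precisely the hypotheses of part~(4) and checking that the $T$-block monoid inherits accepted elasticity from its constituents. Once this machinery is in place, the divisor-closedness of $\mathcal B(G_{\mathcal P})$ inside $\mathcal B(G_{\mathcal P}, T, \iota)$ makes the concluding inclusion a direct application of Lemma~\ref{2.4}.
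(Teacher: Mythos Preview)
Your overall strategy for part~(4) matches the paper's: transfer to the $T$-block monoid $\mathcal B(H)$, observe that $\mathcal B(G_{\mathcal P})$ sits inside as a divisor-closed submonoid (your argument via reducedness of $T$ is exactly right), and invoke Lemma~\ref{2.4}.2 for the final inclusion. That part is correct and essentially identical to the paper.

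However, your argument for accepted elasticity has two genuine gaps.

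First, the claim that finiteness of $\widehat H^{\times}/H^{\times}$ ``localises'' to torsion of $\widehat{H_{\mathfrak p}}^{\times}/H_{\mathfrak p}^{\times}$ is not automatic: the localisation map on these unit quotients need not be surjective. The paper instead invokes the exact sequence
\[
1 \to \widehat{H}^\times/H^\times \to \coprod_{\mathfrak p \in \mathfrak X(H)} \widehat{H_{\mathfrak p}}^\times/H_{\mathfrak p}^\times \to \mathcal C_v(H) \to \mathcal C_v(\widehat H) \to 0
\]
from \cite[Proposition~5.4]{Ge-Ka-Re15a}, which shows that finiteness of \emph{both} $\widehat H^{\times}/H^{\times}$ and $\mathcal C_v(H)$ is what forces each local index to be finite. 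Your sketch uses only the first hypothesis.

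Second, and more seriously, your ``routine padding'' construction is not routine. The $T$-block monoid is a saturated submonoid of $\mathcal F(G_{\mathcal P}) \times T$ cut out by a class-group condition; an element of $(H_{\mathfrak p})_{\red}$ alone need not lie in $\mathcal B(H)$, and adjoining a padding sequence to satisfy the block condition can alter both $\min \mathsf L$ and $\max \mathsf L$ in ways you have not controlled. In particular the asserted formula $\rho(\mathcal B(H)) = \max\bigl(\rho(G_{\mathcal P}), \max_{\mathfrak p} \rho(H_{\mathfrak p})\bigr)$ is not obvious and your sketch does not establish it. The paper avoids this entirely: once the local indices are finite, \eqref{eq:basic4} makes each $(H_{\mathfrak p})_{\red}$ finitely generated, hence so are $T$ and $\mathcal F(G_{\mathcal P}) \times T$, and then $\mathcal B(H)$ is finitely generated as a saturated submonoid of a finitely generated monoid. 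Accepted elasticity then follows from \cite[Theorem~3.1.4]{Ge-HK06a} with no explicit element needed.
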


\begin{proof}
By \cite[Section 5]{Ge-Ka-Re15a}), we infer that  $\widehat H$ is Krull, $\mathcal P^*$ is finite, and that
\begin{equation} \label{structure}
\mathcal I_v^* (H)   \ \ito \ \mathcal F( \mathcal P ) \time T \,, \quad \text{where} \quad T = \prod_{\mathfrak p \in \mathcal P^*} (H_{\mathfrak p})_{\red} \,.
\end{equation}

\smallskip
1. This follows from \eqref{eq:basic3},  from \eqref{structure}, and from Lemma \ref{2.6}.1.

\smallskip
2. This follows from Lemma \ref{2.6}.1 and from Lemma \ref{4.1}.1.

\smallskip
3. This follows from \eqref{structure} and from Lemma \ref{4.3}.2.

\smallskip
4.  There is a transfer homomorphism
\[
\boldsymbol \beta \colon H \to \mathcal B (H) \,, \quad \text{where} \quad \mathcal B (H) \hookrightarrow \mathcal F (G_{\mathcal P}) \times T
\]
is the $T$-block monoid of $H$ and  the inclusion is saturated and cofinal (\cite[Definition 3.4.9]{Ge-HK06a}). Thus $\mathcal L \big( \mathcal B (H) \big) = \mathcal L (H)$, whence it suffices to prove all the statements for $\mathcal B (H)$ instead of proving them for $H$.

Since $\mathcal C_v (H)$ and $\widehat H^{\times}/H^{\times}$ are finite, the exact sequence (\cite[Proposition 5.4]{Ge-Ka-Re15a})
\[
1 \to
\widehat{H}^\times/H^\times  \to \coprod_{\mathfrak{p}\in
\mathfrak{X}(H)}\widehat{H}_{\mathfrak{p}}^\times/H^\times_{\mathfrak{p}}  \to
\mathcal{C}_v(H) \to \mathcal{C}_v(\widehat{H}) \to 0   \,,
\]
implies that $(\widehat H_{\mathfrak p}^{\times} \DP H_{\mathfrak p}^{\times}) < \infty$ for all $\mathfrak p \in \mathcal P^*$. Thus, by \ref{eq:basic4}, all factors of $T$ are finitely generated and hence $T$ is finitely generated. Therefore $\mathcal B (H)$ is finitely generated (as a saturated submonoid of a finitely generated monoid) and hence $\mathcal B (H)$ has accepted elasticity by \cite[Theorem 3.1.4]{Ge-HK06a}.

Since $\mathcal B (G_{\mathcal P}) \subset \mathcal B (H)$ is a divisor-closed submonoid, the remaining statement follows from Lemma \ref{2.4}.2.
\end{proof}

\medskip
\begin{remarks} \label{4.5}~

1. Let $H$ be as in Theorem \ref{4.4}. If $\pi$ is bijective and $H$ is seminormal, then $\mathcal I_v^* (H)$ is half-factorial (\cite[Theorem 5.8.1.(a)]{Ge-Ka-Re15a}) and hence $\Delta \big( \mathcal I_v^* (H) \big) = \emptyset$.

\smallskip
2. Let $R$ be a noetherian weakly Krull domain such that its integral closure $\overline R$ is a finitely generated $R$-module. Then, for $\mathfrak p \in \mathcal P^*$,  the index $(\overline R_{\mathfrak p}^{\times} \DP R_{\mathfrak p}^{\times})$ is finite if and only if $R/\mathfrak p$ is finite (\cite[Theorem 2.1]{Ka98}).

\smallskip
3. Lemma \ref{4.1} shows that the elasticity of a finitely primary monoid of rank $1$ is completely determined by its value semigroup. The interplay of algebraic and arithmetical properties of one-dimensional local Mori domains with properties of their value semigroup has found wide attention in the literature (\cite{Ba-Do-Fo97, Ba-An-Fr00a,DA16a}).

\smallskip
4. For every $d \in \N$, there is a $v$-noetherian finitely primary monoid $H$ with $\min \Delta (H)=d$. However, even for orders $R$ in algebraic number fields the precise value of $\min \Delta (R_{\mathfrak p})$, $\mathfrak p \in \mathcal P^*$, is known only  for some explicit examples (as discussed in \cite[Examples 3.7.3]{Ge-HK06a}).

To consider the global case, let $H$ is as in Theorem \ref{4.4} with   finite $v$-class group $\mathcal C_v (H)$, and suppose further that every class contains a minimal prime ideal from $\mathcal P$.  If $H$ is seminormal or  $|G|\ge 3$, then $\min \Delta (H)=1$ (\cite[Theorem 1.1]{Ge-Zh16c}).
\end{remarks}

\smallskip
It is a central but far open problem in factorization theory to characterize when a weakly Krull monoid $H$ and when its monoid $\mathcal I_v^* (H)$ of $v$-invertible $v$-ideals are transfer Krull monoids resp. transfer Krull monoids of finite type.
To begin with the local case, finitely primary monoids are not transfer Krull  and the same is true for finite direct products of finitely primary monoids (\cite[Theorem 5.6]{Ge-Sc-Zh17b}). These are one of the spare results available so far which indicate that weakly Krull monoids (with the properties of Theorem \ref{4.4}) are transfer Krull only in exceptional cases.  Clearly, combining results from Section \ref{3} with Theorem \ref{4.4}.3 we obtain examples of when the system of sets of lengths of $\mathcal I_v^* (H)$ does not coincide with $\mathcal L (G)$ for any resp. some finite abelian groups $G$. Clearly, if $\mathcal L \big( \mathcal I_v^* (H) \big) \ne \mathcal L (G)$ for an abelian group $G$, then  $\mathcal I_v^* (H)$ is not transfer Krull over $G$.

We formulate one such result (others would be possible) as a corollary. But, of course,  we are far away from a characterization of when $H$ and the monoid $\mathcal I_v^* (H)$ are transfer Krull resp. of when $\mathcal L (H)$ or $\mathcal L ( \mathcal I_v^* (H))$ coincide with $\mathcal L (G)$ for some finite abelian group $G$ (see Section 5 and Problem 5.9 in \cite{Ge-Sc-Zh17b}).

\smallskip
\begin{corollary} \label{4.6}
Let $H$ be a $v$-noetherian weakly Krull monoid with  conductor
 $\emptyset \ne \mathfrak f = (H \DP \widehat H) \subsetneq H$ such that $H_{\mathfrak p}$ is finitely primary for each $\mathfrak p \in \mathfrak X (H)$ and $\mathcal I_v^* (H)$ has accepted elasticity.
Let $\mathfrak p_1, \ldots, \mathfrak p_s$ be the minimal prime ideals with $\rho \big(H_{\mathfrak p_i} \big) = \rho \big( \mathcal I_v^* (H) \big) > 1$.
\begin{enumerate}
 \item  If $\gcd \big(\min \Delta (H_{\mathfrak p_1}), \ldots, \min \Delta (H_{\mathfrak p_s}) \big) > 1$ and $G$ is  a finite abelian group with  $\mathcal L \big(\mathcal I_v^* (H) \big) = \mathcal L (G)$, then $G$ is cyclic of order $4$, $6$, or $10$.

 \item If there is an $i \in [1,s]$ with $\min \Delta (H_{\mathfrak p_i}) > 1$ and $G$ is a finite abelian group with $\mathcal L \big(\mathcal I_v^* (H) \big) = \mathcal L (G)$, then $G$ does not have rank two and is not of the form $C_{p^k}^r$ with $k,r \in \N$, $r\ge 2$, and $p$ prime with $p^k \ge 3$. Moreover, if Conjecture \ref{3.20} holds true, then $G$ is either cyclic or isomorphic to $C_2^{1+ \min \Delta (H_{\mathfrak p_i})}$.
\end{enumerate}
\end{corollary}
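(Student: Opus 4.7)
The plan is to assemble the structural results of Section~\ref{3} with Theorem~\ref{4.4}.3. The key observation is that the hypothesis $\mathcal L (\mathcal I_v^* (H)) = \mathcal L (G)$ preserves every invariant built from systems of sets of lengths, so in particular
\[
\Delta_{\rho} \bigl( \mathcal I_v^* (H) \bigr) = \Delta_{\rho} (G) \quad \text{and} \quad \Delta_{\rho}^* \bigl( \mathcal I_v^* (H) \bigr) \subset \Delta_{\rho} (G) \,.
\]
By Theorem~\ref{4.4}.3, the set $\Delta_{\rho}^* ( \mathcal I_v^* (H) )$ consists precisely of the numbers $\gcd \{d_i \mid i \in I\}$ as $I$ runs over the nonempty subsets of $[1,s]$, and both claims will follow by transporting this information into $\Delta_{\rho} (G)$.

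For part~1, I would note that every member of $\Delta_{\rho}^* ( \mathcal I_v^* (H) )$ is a multiple of $\gcd (d_1, \ldots, d_s)$, hence $\min \Delta_{\rho} (G) = \min \Delta_{\rho}^* (\mathcal I_v^* (H)) = \gcd (d_1, \ldots, d_s) > 1$ by hypothesis (using Lemma~\ref{2.4}.1 to equate the two minima). In particular $1 \notin \Delta_{\rho} (G)$, which forces $|G| \ge 3$ (otherwise $\Delta_{\rho} (G) = \emptyset$) and then invoking Theorem~\ref{3.5} immediately pins $G$ down to one of $C_4$, $C_6$, $C_{10}$.

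For the first assertion of part~2, I would pick the singleton $I = \{i\}$ in Theorem~\ref{4.4}.3 to get $d_i \in \Delta_{\rho}^* ( \mathcal I_v^* (H) ) \subset \Delta_{\rho} (G)$, so $\Delta_{\rho} (G)$ contains the integer $d_i > 1$. Theorems~\ref{3.7} and \ref{3.11} would then rule out, respectively, all groups of rank two and all groups of the form $C_{p^k}^r$ with $r \ge 2$ and $p^k \ge 3$, since for those $\Delta_{\rho} (G) = \{1\}$. For the conjectural part, assuming Conjecture~\ref{3.20} and recalling that $|G| \le 4$ can be handled directly via Theorem~\ref{3.5} and Corollary~\ref{3.15} (the only surviving possibility being the cyclic group $C_4$), we conclude that $G$ is either cyclic or an elementary $2$-group. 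In the elementary $2$-group case $G \cong C_2^r$, Theorem~\ref{3.13} yields $\Delta_{\rho} (G) = \{1, r-1\}$, and from $d_i \in \Delta_{\rho} (G)$ with $d_i > 1$ we read off $d_i = r - 1$, so $G \cong C_2^{1 + \min \Delta (H_{\mathfrak p_i})}$ as required.

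The proof is essentially a bookkeeping exercise and there is no real obstacle; the mildly delicate point is being honest about the small-order exceptions that Conjecture~\ref{3.20} leaves open, which is why one has to dispose of $|G| \le 4$ by hand before reading off the elementary $2$-group case from Theorem~\ref{3.13}.
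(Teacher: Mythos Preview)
Your proof is correct and follows essentially the same route as the paper: transport the structural information from Theorem~\ref{4.4}.3 into $\Delta_\rho(G)$ via the equality $\mathcal L(\mathcal I_v^*(H))=\mathcal L(G)$, then invoke Theorem~\ref{3.5} for part~1 and Theorems~\ref{3.7}, \ref{3.11}, \ref{3.13} together with Conjecture~\ref{3.20} for part~2. The only cosmetic difference is that where the paper cites Lemma~\ref{4.3}.2 for $\min\Delta_\rho(\mathcal I_v^*(H))=\gcd(d_1,\ldots,d_s)$, you obtain the same equality via Lemma~\ref{2.4}.1, and you are slightly more explicit about disposing of the small-group cases $|G|\le 4$ before applying the conjecture.
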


\begin{proof}
1.  We set $d = \gcd \big(\min \Delta (H_{\mathfrak p_1}), \ldots, \min \Delta (H_{\mathfrak p_s}) \big)$. Then Theorem \ref{4.4}.3 and Lemma \ref{4.3}.2 imply that $\min \Delta_{\rho} \big(\mathcal I_v^* (H) \big) = d$. Thus the assertion follows from Theorem \ref{3.5}.

\smallskip
2. We set $\mathfrak p = \mathfrak p_i$, $\min \Delta (H_p) = d$, and let $G$ be a finite abelian group such that $\mathcal L (G) = \mathcal L \big(\mathcal I_v^* (H) \big)$. Then Theorem \ref{4.4}.3 implies that $d \in \Delta_{\rho}^* \big( \mathcal I_v^* (H) \big) \subset \Delta_{\rho} \big( \mathcal I_v^* (H) \big) = \Delta_{\rho}(G)$. Thus the assertion follows from Theorems \ref{3.7},  \ref{3.11}, \ref{3.13} and Conjecture \ref{3.20}.
\end{proof}

\bigskip
\noindent
{\bf Acknowledgement.} We thank the referees for their careful reading.

\providecommand{\bysame}{\leavevmode\hbox to3em{\hrulefill}\thinspace}
\providecommand{\MR}{\relax\ifhmode\unskip\space\fi MR }
\providecommand{\MRhref}[2]{%
  \href{http://www.ams.org/mathscinet-getitem?mr=#1}{#2}
}
\providecommand{\href}[2]{#2}

\end{document}